\documentclass[final]{siamltex}

\usepackage{amsfonts}

\usepackage{graphicx}

\usepackage{amsmath}


\newtheorem{assumption}{Assumption}

\title{Galerkin approximations
for the stochastic Burgers equation\thanks{
This work has been supported by 
the Collaborative Research Centre~701
``Spectral Structures and 
Topological Methods in Mathematics'', 
by the research project
``Mehrskalenanalyse stochastischer 
partieller Differentialgleichungen 
(SPDEs)'' and by
the research project ``Numerical 
solutions of stochastic
differential equations with
non-globally Lipschitz continuous
coefficients'' (all funded by the
German Research Foundation).}}

\author{Dirk
Bl\"{o}mker\thanks{Institut f\"ur Mathematik,
Universit\"at Augsburg, 
86135~Augsburg, Germany,
{\tt dirk.bloemker@math.uni-augsburg.de}}
\and
Arnulf Jentzen\thanks{Seminar for Applied Mathematics, 
Swiss Federal Institute of Technology,
8092 Zurich, 
Switzerland, 
{\tt arnulf.jentzen@sam.math.ethz.ch}}
}

\begin{document}

\maketitle

\begin{abstract}
Existence and uniqueness for
semilinear stochastic
evolution equations with additive
noise by means of finite dimensional
Galerkin approximations is established
and the convergence
rate of the Galerkin approximations
to the solution of the stochastic evolution equation
is estimated.

These abstract results are
applied to several examples
of stochastic partial differential
equations (SPDEs) of evolutionary
type including a stochastic heat equation, 
a stochastic reaction diffusion equation
and a stochastic Burgers equation.
The estimated convergence rates 
are illustrated by 
numerical simulations.

The main novelty in this article is
to estimate the difference of
the finite dimensional Galerkin approximations
and of the solution of the infinite dimensional SPDE
uniformly in space, i.e.,~in
the $L^\infty$-topology, instead
of the usual Hilbert space estimates
in the $L^2$-topology,
that were shown before.
\end{abstract}

\begin{keywords} 
Galerkin approximations,
stochastic partial differential equation,
stochastic heat equation,
stochastic reaction diffusion equation,
stochastic Burgers equation,
strong error criteria. 
\end{keywords}
\begin{AMS}
60H15, 35K90
\end{AMS}

\pagestyle{myheadings}
\thispagestyle{plain}
\markboth{D. BL\"{O}MKER AND A. JENTZEN}{GALERKIN APPROXIMATIONS FOR SPDES}

\section{Introduction}

In this work we present a 
general 
abstract 
result for the spatial approximation 
of stochastic
evolution equations with additive noise
via Galerkin methods.
This abstract result is applied to
several examples of stochastic
partial differential equations (SPDEs)
of evolutionary type including 
a stochastic heat equation, 
a stochastic reaction diffusion 
equation and a stochastic Burgers 
equation.
In all examples we need to  verify  the following conditions.
First, we need the rate of 
approximation of the linear 
equation obtained by omitting the nonlinear
term in the stochastic evolution equation.
Then one needs a quite weak 
Lipschitz condition for the 
nonlinearity and finally 
a uniform bound on the sequence of approximations.
These results are the key for the main theorem
(see Theorem~\ref{mainthm}).
The main novelty in this article is
to estimate the difference of
the finite dimensional Galerkin approximations
and of the solution of the infinite dimensional SPDE
uniformly in space, i.e., in
the $L^\infty$-topology, instead
of the usual Hilbert space estimates shown before
in the $L^2$-topology.

Although there are several different methods 
using  finite dimensional approximations like, for instance, spectral Galerkin,
finite elements, or wavelets,
we focus here on the spectral Galerkin 
method.
Thus the finite dimensional approximations are given by an expansion in terms 
of the eigenfunctions of a dominant linear operator.
This spectral Galerkin method is one 
of the key tools 
in the analysis of stochastic or 
deterministic PDEs. 
For SPDEs see, for example, 
\cite{FF-DG:95, GdP-AD:96, FF-MR:08, DB-FF-MR:09}, 
where the Galerkin method 
was used to  establish the 
existence of solutions.
Moreover, spectral methods are an effective tool for 
numerical simulations, especially on domains, like the interval,
where fast Fourier-transforms are available.
Nevertheless, it is limited on domains, 
where the eigenfunctions of the dominant linear operator 
are not explicitly known.
In recent years there has also been 
a significant interest in  
analytic results for the rate of approximation
using a spectral Galerkin method 
as a numerical method
for SPDEs;
see, for example,
\cite{gk96,ks01}
for SPDEs with 
one-dimensional possibly
non-additive noise
and globally Lipschitz
continuous nonlinearities,
\cite{lr04,ls06, mrw07, mrw08, jk09b, klns11}
for SPDEs with
possibly infinite dimensional 
additive noise and
globally Lipschitz continuous
nonlinearities,
\cite{l03b,AJ:09}
for SPDEs with
possibly infinite dimensional 
additive noise and
non-globally Lipschitz continuous
nonlinearities,
and 
\cite{h02,h03a,mr07a, mr07b}
for SPDEs with
possibly infinite dimensional
non-additive noise
and globally
Lipschitz continuous
nonlinearities.
In most of the above named
references also the full discretization 
is treated including the 
time discretization.

In order to illustrate the main
result of this article we
limit ourself in this introductory 
section to a stochastic Burgers
equation with Dirichlet boundary
conditions and refer to
Section~\ref{secmain} for the 
general result and 
to Section~\ref{secex} 
for further examples.
To this end let $ T \in (0,\infty) $
be a real number,
let 
$
  \left( \Omega, \mathcal{F}, \mathbb{P}
  \right) 
$ 
be a given probability space
and let 
$ 
  X \colon [0,T] \times \Omega
  \rightarrow 
  C( [0,1], \mathbb{R}
  ) 
$ 
be the up to indistinguishability
unique solution 
process
of the SPDE
\begin{equation}
\label{burgerEq}
  dX_t(x) =
  \left[
    \frac{ \partial^2 }{
      \partial x^2
    } X_t(x)
    -
    X_t(x) \cdot
    \frac{ \partial }{ \partial x } X_t(x)
  \right] 
  dt
  +
  dW_t(x),
\;
  X_t(0) = X_t(1) = 0,
\;
  X_0 = 0
\end{equation}
for $ t \in [0,T] $ and
$ x \in (0,1)$, where 
$ W_t $, $ t \in [0,T] $,
is a cylindrical $ I $-Wiener 
process on
$ L^2( (0,1), \mathbb{R} ) $,
which models space-time white noise on $(0,1)$.
In this introductory section
the initial value $ X_0=0 $ is
zero for simplicity of presentation
and we refer to 
Section~\ref{stochburger}
below
for a more general stochastic
Burgers equation with a possibly
non-zero initial value.
The existence and uniqueness of solutions 
of the stochastic Burgers equation 
was, e.g., 
studied in Da Prato
\& Gatarek~\cite{dg95} for
colored noise
and in Da Prato, 
Debussche \& 
Temam~\cite{GdP-AD-RT:94} 
for space-time white noise
(see also Chapter~14 in Da Prato
and Zabczyk~\cite{dz96}).

Recently, Alabert \& Gy\"ongy showed 
the following error estimate 
for spatial discretizations 
in the $L^2$-topology
(see Theorem 2.2 in \cite{ag06}): 
\begin{equation}\label{gyongyest}
  \mathbb{P}\!\left[
    \sup_{0 \leq t \leq T}
    \Big(
    \int_0^1
    |X_t(x) - X_t^N(x)|^2 \, 
    dx
    \Big)^{1/2}
    \leq
    C_{\varepsilon} 
    \cdot 
    N^{  \varepsilon - \frac{1}{2} 
    }
  \right] = 1
\end{equation}
for every $N \in \mathbb{N}:=
\left\{1,2,\dots\right\}$ and
every arbitrarily small
$ \varepsilon \in (0,\frac{1}{2}) $
with random variables 
$ 
  C_\varepsilon \colon \Omega
  \rightarrow [0,\infty)
$, 
$ \varepsilon \in (0,\frac{1}{2})$, 
where the 
$ X^N $, $ N \in \mathbb{N} $, 
are given by finite 
differences approximations.
Our results
(see Lemma~\ref{constructOO},
Theorem~\ref{mainthm}
and Lemma~\ref{BurgerSol})
yield the following estimate
for the stochastic Burgers 
equation~\eqref{burgerEq}
(see Section \ref{stochburger}):
\begin{equation}
\label{illustratethm1}
  \mathbb{P}\!\left[
    \sup_{0 \leq t \leq T}
    \sup_{0 \leq x \leq 1}
    \left|
      X_t(x) - X_t^N(x)
    \right|
    \leq
    C_{\varepsilon}  
    \cdot
    N^{\varepsilon - \frac{1}{2}
    }
  \right] = 1
\end{equation}
for every $ N \in \mathbb{N}$ and
every arbitrarily small
$ \varepsilon \in (0,\frac{1}{2}) $
with random variables 
$ 
  C_\varepsilon \colon \Omega 
  \rightarrow [0,\infty)
$, 
$ 
  \varepsilon \in (0,\frac{1}{2})
$, 
where 
$X^N$, $ N \in \mathbb{N} $, 
are spectral Galerkin approximations. 
Thus, although the spatial error criteria 
is estimated in the bigger $L^\infty$-norm
instead of the $L^2$-norm,
the convergence rate remains $ \frac{1}{2}- $.
This convergence rate with respect to
the strong $L^\infty$-norm
is also corroborated by a numerical example
(see Section \ref{secex}).
(For a real number $ a \in (0,\infty) $, 
we write $ a- $ for the convergence
order, if the convergence order is higher
than $ a - \varepsilon $ for every
arbitrarily small $ \varepsilon \in (0,a) $.)

A further instructive related result 
is given by Liu \cite{l03b}. 
He treats stochastic
reaction diffusion equations of the 
Ginzburg-Landau type which fit 
in the abstract setting in Section~\ref{sec2}.
For such equations he obtained estimates
in the $ H^r $-topology with the rate
$ ( \frac{ 1 }{ 2 } - r ) - $
for every $ r \in (0,\frac{1}{2}) $.
The convergence rates he obtained in the $ H^r $-topologoy 
with $ r \in ( 0, \frac{ 1 }{ 2 } ) $
can, in general, not be improved and,
by using Sobolev embeddings, 
his bounds also yield estimates in 
the $ L^p $-topology with $ p \in (2,\infty) $.
Nevertheless, such estimates do
not yield convergence in 
the $ L^{ \infty } $-topology,
since in one dimension
$ H^r $ is embedded into $L^\infty$
for $ r > \frac{1}{2} $ only.
Moreover, in contrast to 
\eqref{illustratethm1} this would not 
give a convergence 
rate 
$ \frac12- $ 
in any 
$ L^p $-topology
where
$ 
  p \in (2,\infty]
$.
%
%
%
%
%
%
%
%

The rest of the paper is organized 
as follows. 
Section \ref{sec2} gives 
the setting and the assumptions 
for the main result,
which is then presented in Section \ref{secmain}.
In Section \ref{secex} we discuss our examples, while in the final section most of the proofs are stated.

Next we add
that after the preprint
version \cite{bj09}
of this article has appeared,
a number of related
results appeared in
the literature;
see, e.g.,
\cite{BrzezniakCarelliProhl2010,HairerVoss2010,k11,CoxVanNeerven2012,CoxHausenblas2012,Doersek2012,DBKam}.
In particular, we mention
\cite{CoxVanNeerven2012,CoxHausenblas2012} for
temporal and spatial
discretization estimates
in Banach spaces
that imply estimates
in the $ L^{ \infty } $-norm
as well as
\cite{k11}
for the analysis of
spectral Galerkin methods
for semilinear SPDEs with 
possibly non-additive
noise and globally Lipschitz
continuous nonlinearities.
We also refer, e.g., to
\cite{HairerVoss2010}
for further spatial approximations
of stochastic Burgers equations
and, e.g.,
to
\cite{BrzezniakCarelliProhl2010,Doersek2012}
for the analysis of 
spatial and temporal-spatial discretizations
of stochastic Navier-Stokes
equations.
Finally, we would like to
point out that 
parts of this article 
(see Subsection~\ref{stochheat})
appeared 
in the thesis \cite{j09b}
(see Section~2.2.3 in \cite{j09b}).


\section{Setting and assumptions} \label{sec2}


Throughout this article
suppose that the following
setting and the following
assumptions are fulfilled.

The first assumption is a regularity and 
approximation condition on 
the semigroup of the linear 
operator of the considered SPDE.
The second is an appropriate
Lipschitz condition 
on the nonlinearity
of the considered SPDE.
The third is an assumption on the approximation of the 
stochastic convolution
and the initial value of 
the considered SPDE
while the final one is a 
uniform bound on finite 
dimensional approximations
of the considered SPDE.

Let $ T \in (0,\infty) $,
let 
$( \Omega, \mathcal{F}, \mathbb{P}) $ 
be a probability space
and let $ ( V, \left\| \cdot \right\|_V ) $
and $ ( W, \left\| \cdot \right\|_W ) $ 
be two $ \mathbb{R} $-Banach 
spaces.
Moreover, let 
$ 
  P_N \colon V \rightarrow V 
$, 
$ 
  N \in \mathbb{N}
$,
be a sequence of bounded 
linear operators
from $ V $ to $ V $.

\begin{assumption}[Semigroup $S$] \label{semigroup}
Let
$ \alpha \in [0,1) $ 
and $ \gamma \in (0,\infty) $ 
be real numbers and let
$ 
  S \colon (0,T] \rightarrow L(W,V) 
$
be a strongly continuous mapping 
which satisfies
$
  \sup_{ t \in (0,T] }
  \left(
    t^{ \alpha }
    \left\| S_t \right\|_{L(W,V)} 
  \right)
  < \infty
$
and
$
  \sup_{N \in \mathbb{N}}
  \sup_{ t \in (0,T] }
  \left(
    t^{ \alpha }
    N^{ \gamma }
    \left\| S_t - P_N S_t \right\|_{L(W,V)} 
  \right)
  < \infty 
$.
\end{assumption}

\begin{assumption}[Nonlinearity $F$]\label{drift}
Let 
$ 
  F \colon V \rightarrow W 
$ 
be a mapping which 
satisfies
$
  \sup_{ 
  \substack{
    \| v \|_V,
    \| w \|_V \leq r, \,
    v \neq w 
  } }
  \frac{ 
    \left\| F(v) - F(w)
    \right\|_W 
  }{
    \left\| 
      v - w
    \right\|_V
  } 
  < \infty
$
for every
$ r \in (0,\infty) $.
\end{assumption}

\begin{assumption}[Stochastic
process $O$]\label{stochconv}
Let
$ 
  O \colon [0,T] \times \Omega 
  \rightarrow V 
$
be a stochastic process
with continuous 
sample paths and 
$
  \sup_{ N \in \mathbb{N} }
  \sup_{0 \leq t \leq T}
  N^\gamma 
  \left\| O_t(\omega) - 
  P_N( O_t(\omega) ) \right\|_V
  < \infty
$ for every
$ \omega \in \Omega $,
where $ \gamma \in (0,\infty) $
is given in Assumption \ref{semigroup}.
\end{assumption}

\begin{assumption}[Existence of
solutions]\label{solution}
Let
$ 
  X^N \colon [0,T] \times
  \Omega \rightarrow V 
$,
$ 
  N \in \mathbb{N}
$,
be a sequence of
stochastic processes
with continuous sample paths
and with
\begin{equation}
\label{GalSODEs}
  X^N_t(\omega)
  = 
  \int^t_0
    P_N \, S_{ t - s } \,
    F( X^N_s( \omega ) ) \, 
  ds
  +
  P_N( O_t(\omega) )
\quad
  \text{and}
\quad
  \sup_{M \in \mathbb{N}}
  \sup_{ s \in [0,T] }
  \| X^M_s(\omega) \|_V 
  < \infty 
\end{equation}
for every $ t \in [0,T] $, 
$ \omega \in \Omega $
and every $N \in \mathbb{N}$.
\end{assumption}

As usual, we call here a mapping 
$ 
  Y \colon [0,T] \times \Omega
  \rightarrow V 
$ 
a stochastic
process, 
if for every
$ t \in [0,T] $
the mapping
$
  Y_t \colon
  \Omega \ni 
  \omega \mapsto
  Y_t(\omega) 
  :=
  Y(t,\omega)
  \in V
$
is 
$ 
  \mathcal{F}
$/$
  \mathcal{B}(V)
$-measurable.
Additionally, we say that 
a stochastic
process 
$ 
  Y \colon [0,T]
  \times \Omega
  \rightarrow V 
$ 
has continuous
sample paths, if
for every $ \omega \in \Omega $
the mapping
$
  [0,T] \ni t \mapsto 
  Y_t( \omega ) \in V
$
is continuous.
Furthermore, we say that
a mapping 
$ f \colon (0,T] \to L(W,V) $
is strongly continuous if 
for every $ w \in W $
the mapping
$
  (0,T] \ni t \mapsto
  f(t) w 
  \in V
$
is continuous.
Moreover, note that if
$ 
  Y \colon [0,T] \times
  \Omega \rightarrow V 
$
is a stochastic process
with continuous sample paths,
then Assumptions~\ref{semigroup}
and \ref{drift} ensure 
for every
$ \omega \in \Omega $,
$ t \in (0,T] $
and every
$ N \in \mathbb{N} $
that the mapping
$
  (0,t) \ni s
  \mapsto
  P_N \, S_{ t - s } 
  \, F( Y_s( \omega ) )
  \in V
$
is continuous and therefore,
we obtain
for every
$ \omega \in \Omega $,
$ t \in [0,T] $
and every
$ N \in \mathbb{N} $
that the $ V $-valued Bochner 
integral 
$
  \int^t_0P_N \, 
  S_{ t - s } \, F( Y_s(\omega) ) \, 
  ds
  \in V
$
(see \eqref{GalSODEs}
in Assumption~\ref{solution})
is well defined.

\section{Main result}
\label{secmain}

In this section we state 
the main approximation result, 
which is based on the assumptions
of the previous section.
Its proof is postponed to 
Subsection~\ref{sec:proofthm1}.

\begin{theorem}
\label{mainthm}
Let Assumptions 
\ref{semigroup}-\ref{solution}
be fulfilled.
Then there exists
a unique stochastic process
$ 
  X \colon [0,T] \times \Omega
  \rightarrow V 
$ 
with
continuous sample paths
which fulfills
\begin{equation}
\label{eqspdesol}
  X_t(\omega)
  = 
  \int^t_0
  S_{ t - s }
  \, F( X_s( \omega ) ) \, ds
  +
  O_t(\omega) 
\end{equation}
for every $ t \in [0,T] $ and 
every $ \omega \in \Omega $.
Moreover, there exists
an 
$ 
  \mathcal{F}
$/$
  \mathcal{B}([0,\infty))
$-measurable 
mapping
$ 
  C \colon \Omega \rightarrow [0,\infty)
$
such that
\begin{equation}
\label{eq:mainestimate}
  \sup_{0\leq t \leq T} 
  \left\| 
    X_t(\omega)
    - X^N_t(\omega) 
  \right\|_V
  \leq
  C(\omega) \cdot N^{ - \gamma }
\end{equation}
for every $ N \in \mathbb{N}$
and every $ \omega \in \Omega $
where $ \gamma \in (0,\infty) $
is given in Assumption \ref{semigroup}.
\end{theorem}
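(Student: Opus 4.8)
The plan is to argue pathwise: fix $\omega \in \Omega$ and treat \eqref{GalSODEs} as a family of deterministic integral equations, recovering the probabilistic content (measurability of $X$ and of $C$) only at the very end. The two preliminary observations I would record first are: (i) from Assumption~\ref{semigroup} and the triangle inequality $\|P_N S_t\|_{L(W,V)} \le \|S_t\|_{L(W,V)} + \|S_t - P_N S_t\|_{L(W,V)}$ there is a constant $c \in (0,\infty)$ with $\sup_{N}\sup_{t\in(0,T]} t^\alpha \|P_N S_t\|_{L(W,V)} \le c$ (this is what lets me integrate the singular kernel against $P_N S$ uniformly in $N$); and (ii) setting $R(\omega) := \sup_{M}\sup_{s}\|X^M_s(\omega)\|_V < \infty$ by Assumption~\ref{solution}, Assumption~\ref{drift} supplies a Lipschitz constant $L = L(R(\omega))$ on the ball of radius $R(\omega)$, and, since a Lipschitz map on a ball is bounded, $K(\omega) := \sup_M \sup_s \|F(X^M_s(\omega))\|_W < \infty$.

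The core step is to show that $(X^N(\omega))_N$ is Cauchy in $C([0,T],V)$ with the quantitative rate $N^{-\gamma}$. I would fix $N \le M$, subtract the two copies of \eqref{GalSODEs}, and split the integrand as
\[
  P_N S_{t-s} F(X^N_s) - P_M S_{t-s} F(X^M_s)
  = (P_N - P_M) S_{t-s} F(X^N_s)
  + P_M S_{t-s}\big(F(X^N_s) - F(X^M_s)\big).
\]
The first summand is bounded by a constant multiple of $(t-s)^{-\alpha} K(\omega)\, N^{-\gamma}$ via Assumption~\ref{semigroup} (using $M^{-\gamma}\le N^{-\gamma}$), the second by $c\,(t-s)^{-\alpha} L\, \|X^N_s - X^M_s\|_V$ via (i) and Assumption~\ref{drift}, and the boundary term $\|P_N O_t - P_M O_t\|_V$ by a constant multiple of $N^{-\gamma}$ via Assumption~\ref{stochconv}. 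Writing $u(t) := \|X^N_t(\omega) - X^M_t(\omega)\|_V$ and using $\int_0^t (t-s)^{-\alpha}\,ds \le \frac{T^{1-\alpha}}{1-\alpha}$ (here $\alpha < 1$ is essential), this yields $u(t) \le a\, N^{-\gamma} + b \int_0^t (t-s)^{-\alpha} u(s)\, ds$ with $a,b$ depending only on $\omega$. The weakly singular Gronwall (Henry) lemma then gives $\sup_{t}\|X^N_t(\omega) - X^M_t(\omega)\|_V \le \tilde C(\omega)\, N^{-\gamma}$, so the sequence is Cauchy.

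Consequently $X^N(\omega)$ converges uniformly on $[0,T]$ to a limit $X(\omega) \in C([0,T],V)$ with $\sup_s\|X_s(\omega)\|_V \le R(\omega)$, and letting $M\to\infty$ in the Cauchy estimate delivers exactly \eqref{eq:mainestimate}. To see that $X$ solves \eqref{eqspdesol}, I would pass to the limit $N\to\infty$ in \eqref{GalSODEs}: the term $P_N O_t \to O_t$ by Assumption~\ref{stochconv}, while the same two-term splitting as above, now comparing $P_N S_{t-s} F(X^N_s)$ with $S_{t-s} F(X_s)$, shows the Bochner integral converges to $\int_0^t S_{t-s} F(X_s)\,ds$, using the uniform convergence $X^N\to X$, the bound $K(\omega)$, and $\alpha<1$. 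Uniqueness among continuous (hence bounded on $[0,T]$) solutions is the identical singular-Gronwall argument applied to the difference of two solutions, whose forcing term now vanishes.

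Finally, for the probabilistic conclusions I would note that $X$, as a pointwise limit of the $\mathcal{F}/\mathcal{B}(V)$-measurable maps $X^N_t$, is again a stochastic process with continuous sample paths, and that
\[
  C(\omega) := \sup_{N} \big(N^\gamma \sup_{t\in[0,T]\cap\mathbb{Q}}\|X_t(\omega) - X^N_t(\omega)\|_V\big)
\]
is finite by the rate estimate and $\mathcal{F}/\mathcal{B}([0,\infty))$-measurable as a countable supremum of measurable functions (the inner supremum over $t$ reduces to rationals by continuity of paths); this $C$ witnesses \eqref{eq:mainestimate}. I expect the main obstacle to be the weakly singular Gronwall step together with keeping every constant uniform in $N$ — in particular the operator bound in (i), which is precisely what allows the singular kernel $(t-s)^{-\alpha}$ to be integrated against $P_N S$ — whereas the passage to the limit and the measurability of $C$ are routine once the pathwise rate is established.
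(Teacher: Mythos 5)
Your analytic core is exactly the paper's proof: the same two--term splitting of $P_N S_{t-s}F(X^N_s)-P_M S_{t-s}F(X^M_s)$ into a projection--difference term and a Lipschitz--difference term (the paper attaches the Lipschitz difference to $P_N$ and the projection difference to $F(X^M_s)$, you do the reverse; this is immaterial), the same uniform bound on $t^{\alpha}\|P_N S_t\|_{L(W,V)}$ obtained from the triangle inequality, the same application of the weakly singular Gronwall inequality (Lemma~7.1.11 in Henry), and the same limit passage and uniqueness argument. Up to the last step your proposal is a faithful reconstruction.

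The one genuine gap is the measurability of $C$. You define $C(\omega)$ through the quantities $\|X_t(\omega)-X^N_t(\omega)\|_V$ and call the result ``a countable supremum of measurable functions,'' but the theorem does not assume $V$ to be separable, and on a non-separable Banach space the difference of two $\mathcal{F}$/$\mathcal{B}(V)$-measurable maps need not be $\mathcal{F}$/$\mathcal{B}(V)$-measurable (the map $\omega\mapsto(X_t(\omega),X^N_t(\omega))$ is only $\mathcal{F}$/$\mathcal{B}(V)\otimes\mathcal{B}(V)$-measurable, and $\mathcal{B}(V)\otimes\mathcal{B}(V)$ is in general strictly smaller than $\mathcal{B}(V\times V)$, so one cannot compose with the continuous map $(u,v)\mapsto\|u-v\|_V$). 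Restricting $t$ to the rationals does not help with this. This is precisely the point the paper flags in the remark following Theorem~\ref{mainthm}: its constant is never formed from such differences, but is instead the explicit Gronwall constant $C(\omega)=2\,(R(\omega))^4\,\mathrm{E}_{(1-\alpha)}\!\left(T\left(R(\omega)Z(\omega)\Gamma(1-\alpha)\right)^{1/(1-\alpha)}\right)$ read off from the Cauchy estimate, where $R$ and $Z$ are built exclusively from compositions of measurable maps with continuous maps --- e.g.\ $\|(I-P_N)\,O_t(\omega)\|_V$, $\|F(X^N_t(\omega))\|_W$ and $\|X^N_t(\omega)\|_V$ --- so their measurability is immediate. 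Your pathwise bound is still valid with your $C$; only its asserted measurability is unjustified. Either restrict to separable $V$ (which covers all the paper's examples) or replace your $C$ by the explicit one extracted from the Gronwall step.
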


Let us add three remarks on 
Theorem~\ref{mainthm}.
First, we would like to point
out that the initial value
of the stochastic
evolution equation~\eqref{eqspdesol}
is incorporated in the
driving stochastic process
$
  O \colon [0,T] \times \Omega
  \rightarrow V 
$ (see also Proposition~\ref{constructO}
below for more details).
Second, we emphasize that the
driving stochastic processes
$
  O \colon [0,T] \times \Omega
  \rightarrow V 
$
is not assumed to be a stochastic 
convolution of the semigroup
and a cylindrical Wiener process.
In particular, the stochastic
evolution equation~\eqref{eqspdesol}
covers SPDEs disturbed by fractional
Brownian motions too.
Third, we would like to point
out that 
Theorem~\ref{mainthm}
yields the existence of an 
$ \mathcal{F} $/$ \mathcal{B}( [0,\infty) ) $-measurable 
mapping
$ 
  C \colon \Omega \rightarrow [0,\infty)
$
such that
\eqref{eq:mainestimate}
holds although
the $ \mathbb{R} $-Banach
space 
$ 
  \left( V, \left\| \cdot \right\|_V \right) 
$
is not assumed to be separable.
The sum and the difference of two
$ \mathcal{F} $/$ \mathcal{B}( V ) $-measurable 
mappings on the possibly non-separable 
$ \mathbb{R} $-Banach
space $ V $ are, in general, not
$ \mathcal{F} $/$ \mathcal{B}( V ) $-measurable
anymore.
Nonetheless, it is possible
to establish the existence
of an
$ \mathcal{F} $/$ \mathcal{B}( [0,\infty) ) $-measurable
mapping
$ 
  C \colon \Omega \rightarrow [0,\infty)
$
such that
\eqref{eq:mainestimate}
holds by exploiting
for every $ t \in [0,T] $
and every $ N \in \mathbb{N} $
that 
the difference
$
  O_t - P_N( O_t )
=
  ( I - P_N ) \, O_t
  \colon \Omega \to V
$
is $ \mathcal{F} $/$ \mathcal{B}( V ) $-measurable
(see \eqref{eq:defR}
in the proof
of Theorem~\ref{mainthm}
for more details).
Note that the composition
of two measurable mappings
is measurable (on
non-separable 
$ \mathbb{R} $-Banach
spaces too).
Finally, we note that
the error constant
$ 
  C \colon \Omega \rightarrow [0,\infty)
$
appearing in 
\eqref{eq:mainestimate}
is described explicitly in
the proof of Theorem~\ref{mainthm}
(see 
definition~\eqref{eq:described_constant}
in the proof of 
Theorem~\ref{mainthm}
for details).


\section{Examples}
\label{secex}

%
This section presents
some examples 
of the setting 
in Section \ref{sec2}.
%
%
%
%
%
\subsection{Stochastic heat equation}
\label{stochheat}


In this subsection an important example
of Assumption \ref{stochconv} is presented.
We consider a linear equation with $F=0$ and 
thus consider only the approximation of the 
Ornstein-Uhlenbeck process $O$.

To this end 
let $ d \in \mathbb{N} $ and
let
$ V = W = C( [0,1]^d, \mathbb{R} ) $ be 
the $\mathbb{R}$-Banach space of
continuous functions from 
$ [0,1]^d $ to $ \mathbb{R} $
equipped with the 
supremum norm
$
  \left\| \cdot 
  \right\|_V 
  =
  \left\| \cdot 
  \right\|_W
  =
  \left\| \cdot 
  \right\|_{
    C([0,1]^d , \mathbb{R})
  }
$.
Moreover, 
consider the continuous 
functions
$
  e_i \colon
  [0,1]^d \rightarrow
  \mathbb{R}
$,
$ i \in \mathbb{N}^d 
$,
and the
real numbers
$
  \lambda_i
$,
$
  i \in \mathbb{N}^d
$,
defined through
\begin{equation}
\label{deflambdai}
  e_i(x) :=
  2^{\frac{d}{2}}
  \sin(i_1 \pi x_1)
  \ldots
  \sin(i_d \pi x_d)
\qquad\text{and}\qquad
  \lambda_i
  := \pi^2
  \big( 
    | i_1 |^2 + \ldots + | i_d |^2 
  \big)
\end{equation}
for all 
$ 
  x = (x_1, \dots, x_d) \in [0,1]^d 
$
and all
$ 
  i = 
  (i_1, \dots, i_d) \in \mathbb{N}^d 
$.
Additionally, suppose that the 
bounded linear operators
$ 
  P_N \colon 
  C([0,1]^d, \mathbb{R}) 
  \rightarrow 
  C([0,1]^d, \mathbb{R})  
$,
$ N \in \mathbb{N} $,
are given by
\begin{equation}\label{PNex}
  ( P_N(v))(x)
  =
  \sum_{ i \in\{ 1, \dots, N \}^d}
  \int_{(0,1)^d} e_i(s) \, v(s) \, ds
  \cdot e_i(x)
\end{equation}
for all 
$ 
  x \in [0,1]^d 
$,
$ 
  v \in C([0,1]^d, \mathbb{R}) 
$ 
and all
$ 
  N \in \mathbb{N} 
$.
The linear operators
$ P_N $, $ N \in \mathbb{N} $,
are projection operators, i.e.,
they satisfy $ P_N( P_N( v ) ) = P_N( v ) $
for all $ v \in C( [0,1]^d, \mathbb{R} ) $
and all $ N \in \mathbb{N} $
and their images are the finite
dimensional $ \mathbb{R} $-vector 
spaces
$
  P_N\big( 
    C( [0,1]^d, \mathbb{R} )
  \big)
$,
$ N \in \mathbb{N} $.
The operators 
$ P_N $, $ N \in \mathbb{N} $,
are thus compact linear operators
and from the Daugavet property
of the $ \mathbb{R} $-Banach
space
$
  C( [0,1]^d, \mathbb{R} )
$
(see, e.g., Definition~2.1
and Example (a)
in Werner~\cite{Werner2001})
we get that
$
  \|
    I - P_N
  \|_{ 
    L( C( [0,1]^d, \mathbb{R} ) ) 
  }
  =
  1 +
  \|
    P_N
  \|_{ 
    L( C( [0,1]^d, \mathbb{R} ) ) 
  }
$
for all
$ N \in \mathbb{N} $
(see, for instance, 
Theorem~2.7 in
Werner~\cite{Werner2001}).
Next let 
$ 
  S \colon (0,T]
  \rightarrow L( 
    C( [0,1]^d , \mathbb{R} ) 
  ) 
$ 
be a mapping given by
\begin{equation}
\label{eq:defS}
  (S_t v)(x)
  =
  \sum_{i \in \mathbb{N}^d}
  e^{ - \lambda_i t }
  \int_{(0,1)^d} e_i(s) \, v(s) \, ds
  \cdot e_i(x)
\end{equation}
for all $ t \in (0,T] $,
$ x \in [0,1]^d $
and all 
$ 
  v \in 
  C( [0,1]^d , \mathbb{R} ) 
$.

\begin{lemma}
\label{constructS}
Let $ d \in \left\{ 1,2,3 \right\} $.
Then the mapping
$ 
  S \colon (0,T]
  \rightarrow L( 
    C( [0,1]^d , \mathbb{R} ) 
  ) 
$ 
given by \eqref{eq:defS}
satisfies
Assumption~\ref{semigroup}
for every 
$
  \alpha \in 
  [
    \frac{ d }{ 4 } + \frac{ \gamma }{ 2 },
    1 
  )
$
and every
$ 
  \gamma \in ( 0, 2 - \frac{ d }{ 2 } )
$.
\end{lemma}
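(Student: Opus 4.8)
The plan is to reduce both bounds of Assumption~\ref{semigroup} to a single estimate on the $L^2$–norm of the (truncated) heat kernel. Since $\{e_i\}_{i\in\mathbb N^d}$ defined in \eqref{deflambdai} is an orthonormal basis of $L^2((0,1)^d,\mathbb R)$, the operators $S_t$ in \eqref{eq:defS} and $P_NS_t$ with $P_N$ from \eqref{PNex} are integral operators, and $S_t-P_NS_t$ has the kernel $K_N(t,x,y)=\sum_{i\in\mathbb N^d\setminus\{1,\dots,N\}^d}e^{-\lambda_i t}\,e_i(x)\,e_i(y)$, the series converging absolutely and uniformly for fixed $t>0$ because $|e_i|\le 2^{d/2}$ and $e^{-\lambda_i t}$ decays superexponentially in $|i|$ (this also gives the strong continuity of $S$ on $(0,T]$ by dominated convergence). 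For such an operator on $C([0,1]^d,\mathbb R)$ one has $\|S_t-P_NS_t\|_{L(V)}\le\sup_x\int_{(0,1)^d}|K_N(t,x,y)|\,dy$, and since $(0,1)^d$ has volume one, the Cauchy–Schwarz inequality bounds this by $\sup_x\big(\int_{(0,1)^d}K_N(t,x,y)^2\,dy\big)^{1/2}$, which by orthonormality equals $\sup_x\big(\sum_{i\notin\{1,\dots,N\}^d}e^{-2\lambda_i t}e_i(x)^2\big)^{1/2}$.

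For the first bound I would take $N=0$ (no projection) in the same computation: $\|S_t\|_{L(V)}\le\sup_x\big(\sum_{i}e^{-2\lambda_i t}e_i(x)^2\big)^{1/2}\le\big(2^d\sum_i e^{-2\lambda_i t}\big)^{1/2}$, and the trace bound established below gives $\|S_t\|_{L(V)}\le C\,t^{-d/4}$, so $t^\alpha\|S_t\|_{L(V)}\le C\,t^{\alpha-d/4}\le C\,T^{\alpha-d/4}$ on $(0,T]$ because $\alpha\ge\frac d4+\frac\gamma2\ge\frac d4$. (Alternatively the Dirichlet kernel on the cube factorises into one–dimensional sub-Markovian kernels, so $K_0\ge0$ and $\int K_0(t,x,y)\,dy\le1$, giving directly $\|S_t\|_{L(V)}\le1$.)

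The crux is the second bound. For $i\in\mathbb N^d\setminus\{1,\dots,N\}^d$ some coordinate exceeds $N$, so $\lambda_i\ge\pi^2(N+1)^2>\pi^2N^2$ and hence $1\le\pi^{-2\gamma}N^{-2\gamma}\lambda_i^\gamma$, whence
\[
  \sum_{i\notin\{1,\dots,N\}^d}e^{-2\lambda_i t}e_i(x)^2
  \le 2^d\,\pi^{-2\gamma}N^{-2\gamma}\sum_{i\in\mathbb N^d}\lambda_i^\gamma e^{-2\lambda_i t}.
\]
It then suffices to prove the trace-type bound $\sup_{t\in(0,T]}t^{\frac d2+\gamma}\sum_{i\in\mathbb N^d}\lambda_i^\gamma e^{-2\lambda_i t}<\infty$, which I would obtain by decoupling the polynomial weight via $\lambda_i^\gamma e^{-\lambda_i t}\le\sup_{\mu>0}(\mu^\gamma e^{-\mu t})=(\gamma/e)^\gamma t^{-\gamma}$ and then factorising the remaining exponential sum, using $\lambda_i=\pi^2(i_1^2+\dots+i_d^2)$, as $\sum_{i\in\mathbb N^d}e^{-\lambda_i t}=\big(\sum_{n\ge1}e^{-\pi^2n^2t}\big)^d\le\big(\tfrac12(\pi t)^{-1/2}\big)^d$, the last inequality by comparing the decreasing summand with $\int_0^\infty e^{-\pi^2x^2t}\,dx$. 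Combining these displays yields $\|S_t-P_NS_t\|_{L(V)}\le C\,N^{-\gamma}t^{-(\frac d4+\frac\gamma2)}$ with $C$ independent of $N$ and $t$, i.e.\ the second bound with $\alpha=\frac d4+\frac\gamma2$. Since $t^\alpha\le T^{\alpha-\alpha_0}t^{\alpha_0}$ on $(0,T]$ for $\alpha\ge\alpha_0:=\frac d4+\frac\gamma2$, the estimate persists for every $\alpha\in[\frac d4+\frac\gamma2,1)$, and the hypothesis $\gamma\in(0,2-\frac d2)$ (nonempty precisely for $d\in\{1,2,3\}$) is exactly what makes this $\alpha$–interval nonempty.

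The only genuine obstacle is attaining the \emph{closed} left endpoint $\alpha=\frac d4+\frac\gamma2$ rather than $\frac d4+\frac\gamma2+\varepsilon$. A naive route through the Sobolev embedding $H^s\hookrightarrow L^\infty$, valid only for $s>\frac d2$, loses an $\varepsilon$ and misses the endpoint; estimating the $L^2$–norm of the truncated kernel directly and balancing the frequency localization against the exact Gaussian scaling of $\sum_i\lambda_i^\gamma e^{-2\lambda_i t}$ is what recovers the sharp power $t^{-(\frac d4+\frac\gamma2)}$. The remaining work is the bookkeeping of that trace sum, which the product structure of the cube renders routine.
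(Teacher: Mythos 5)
Your argument is correct and complete. Note, however, that the paper does not actually prove Lemma~\ref{constructS}: it declares the result well-known, omits the proof, and only remarks that the argument ``essentially uses a suitable Sobolev embedding'' (which is where the restriction $d\le 3$ enters). Your route is different from the one the paper hints at: instead of going through $H^s\hookrightarrow L^\infty$ for $s>\tfrac d2$, you estimate the integral kernel of $S_t-P_NS_t$ directly, pass to its $L^2_y$-norm by Cauchy--Schwarz on the unit cube, and exploit the orthonormality of the $e_i$ together with the product structure of the spectrum to get the exact trace asymptotics $\sum_i\lambda_i^\gamma e^{-2\lambda_i t}\le C\,t^{-\gamma-d/2}$. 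All the individual steps check out: the frequency localization $1\le\pi^{-2\gamma}N^{-2\gamma}\lambda_i^\gamma$ for $i\notin\{1,\dots,N\}^d$, the decoupling $\lambda_i^\gamma e^{-\lambda_i t}\le(\gamma/e)^\gamma t^{-\gamma}$, and the one-dimensional Gaussian sum comparison are all valid, and they combine to the sharp bound $\|S_t-P_NS_t\|_{L(V)}\le C\,N^{-\gamma}t^{-(\frac d4+\frac\gamma2)}$, which yields Assumption~\ref{semigroup} for the full closed-left-endpoint range $\alpha\in[\tfrac d4+\tfrac\gamma2,1)$ claimed in the lemma; your observation that a naive Sobolev embedding would only give the open endpoint is accurate. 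Your method is in fact the same direct Fourier computation the paper carries out in detail for the analogous one-dimensional statement, Lemma~\ref{BurgerSG} (there with $W=H^{-1}$ in place of $C([0,1],\mathbb R)$), so your proposal is arguably closer in spirit to the paper's actual techniques than to the Sobolev-embedding proof it alludes to; what your approach buys is self-containedness and the endpoint, at the cost of the explicit trace-sum bookkeeping, which the cube's product structure keeps elementary.
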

Clearly, this is simply
the semigroup generated by
the Laplacian with Dirichlet
boundary conditions
(see, e.g., Section 
3.8.1 in \cite{sy02}).
Other boundary
conditions such as Neumann
or periodic boundary
conditions could also be 
considered here.
The proof of 
Lemma~\ref{constructS}
is well-known
and therefore
omitted.
We also add that the 
proof of 
Lemma~\ref{constructS}
essentially uses a suitable
Sobolev embedding
and for this the
condition $ d \leq 3 $
is assumed
in Lemma~\ref{constructS}.
We now present the promised
example of 
Assumption~\ref{stochconv}.
We consider a stochastic convolution 
of the semigroup $ S $ constructed
in \eqref{eq:defS}
and a cylindrical Wiener process.
The following result provides
an appropriate version of such a
process, in which 
the initial value
of the stochastic evolution
equation \eqref{eqspdesol} 
is additionally incorporated.

\begin{proposition}
\label{constructO}
Let $ d \in \mathbb{N} $,
let 
$ 
  V = C( [0,1]^d, \mathbb{R}
  ) 
$
with
$ 
  \| v \|_V =
  \| v \|_{ C( [0,1]^d, \mathbb{R} ) }
$ 
for every
$ v \in V $, 
let 
$ 
  \rho \in (0,\infty)
$,
let 
$ 
  \beta^i \colon [0,T] 
  \times \Omega \rightarrow
  \mathbb{R} 
$, 
$ 
  i \in \mathbb{N}^d
$,
be a family 
of independent 
standard Brownian motions
with continuous sample paths
and 
let 
$ 
  b \colon \mathbb{N}^d
  \rightarrow \mathbb{R} 
$
be a function with
$
  \sum_{ 
    i \in \mathbb{N}^d
  }
  \left( i_1^2 + \ldots + i_d^2 \right)^{ 
    \left( \rho - 1 \right) }
  | b(i) |^2 
  < \infty 
$.
Furthermore, suppose that 
$ 
  \xi \colon \Omega
  \rightarrow V 
$ 
is an
$ 
  \mathcal{F}
$/$
  \mathcal{B}(V)
$-measurable
mapping with 
$
  \sup_{N \in \mathbb{N}}
  \left(
    N^{ \rho }
    \left\| \xi(\omega) - 
      P_N( \xi(\omega)
      ) 
    \right\|_V
  \right) < \infty 
$
for every 
$ \omega \in \Omega $.
Then there exists 
an up to indistinguishability
unique stochastic 
process 
$ 
  O \colon [0,T]
  \times \Omega \rightarrow V
$
with continuous sample paths
which satisfies
\begin{equation}\label{approxN}
  \mathbb{P}\bigg[
    \lim_{N \rightarrow \infty}\,
      \sup_{0 < t \leq T}
      \Big\| O_t - S_t \, \xi 
        - 
        \!\!\!\!\!\!
        \sum_{ 
          i \in 
          \{ 1, \dots, N \}^d 
        }
        \!\!\!\!\!\!
        b(i) \,
        \Big( -\lambda_i 
        \int^t_0
        e^{ 
        - \lambda_i (t-s) }
        \beta^i_s \,ds
        + \beta^i_t 
        \Big) \,
        e_i
      \Big\|_{V}
    = 0
  \bigg] = 1
\end{equation}
and
\begin{equation}
  \sup_{ N \in \mathbb{N} }
  \sup_{ 0 \leq t \leq T }
  \big(
    N^{\gamma}
    \|
      O_t(\omega) - 
      P_N( O_t(\omega)
      )
    \|_V 
  \big)
  < \infty
\end{equation}
for every $ \omega \in \Omega $ and
every $\gamma \in (0,\rho)$.
In particular, $O$ satisfies
Assumption \ref{stochconv}
for every $ \gamma \in (0,\rho) $.
Here the functions 
$ e_i \in V $,
$ i \in \mathbb{N}^d $,
the real numbers
$ \lambda_i $, $ i \in \mathbb{N}^d$,
and the linear operators 
$ 
  P_N \colon V \rightarrow V 
$,
$ 
  N \in \mathbb{N}
$, 
are given
in 
\eqref{deflambdai}
and \eqref{PNex}.
\end{proposition}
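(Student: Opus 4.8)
The plan is to construct the process $O$ explicitly as the sum of two pieces — the deterministic evolution $S_t\,\xi$ of the initial value and the stochastic convolution $O^{\mathrm{st}}$ of the semigroup $S$ against the cylindrical noise — and then to verify the two claimed properties (the convergence \eqref{approxN} and the spatial approximation rate) term by term. For the stochastic convolution I would use the standard Da~Prato--Zabczyk factorization: first observe that, formally, $\int_0^t S_{t-s}\,dW_s = \sum_i b(i)\big(\beta^i_t - \lambda_i\int_0^t e^{-\lambda_i(t-s)}\beta^i_s\,ds\big)\,e_i$ after an integration by parts in $s$ that moves the $dW_s$ onto the smooth semigroup factor, which is exactly the finite sum appearing inside the norm in \eqref{approxN}. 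So \eqref{approxN} amounts to showing that these partial sums converge uniformly in $t$ almost surely to a continuous $V$-valued limit $O^{\mathrm{st}}$.

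The main technical work is establishing the uniform (in $t$) almost sure convergence of the series in the supremum norm on $C([0,1]^d,\mathbb{R})$, together with sample-path continuity of the limit. I would control this via a Kolmogorov-type continuity/regularity argument: estimate moments of the increments $O^{\mathrm{st}}_t - O^{\mathrm{st}}_s$ in $C([0,1]^d,\mathbb{R})$, or more conveniently estimate moments in a fractional Sobolev space $H^r$ with $r>\tfrac{d}{2}$ and use the Sobolev embedding $H^r \hookrightarrow C([0,1]^d,\mathbb{R})$ (the same embedding that forces the dimension restriction in Lemma~\ref{constructS}). The summability hypothesis $\sum_i (i_1^2+\cdots+i_d^2)^{\rho-1}|b(i)|^2 < \infty$ is precisely what makes the relevant Sobolev norms of the increments finite: each Fourier mode contributes a factor $\lambda_i^{\,r}$ in the $H^r$-norm against the temporal regularity of the Ornstein--Uhlenbeck factor, and the exponent $\rho$ governs the trade-off. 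The factorization method is the cleanest route, since it trades spatial regularity for a negative power of time that is still integrable, letting one bound the sup over $t$ by a single Bochner integral with a Gaussian random amplitude.

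For the spatial approximation rate, I would treat the two summands separately. For the deterministic part $S_t\,\xi$, the hypothesis on $\xi$ gives $\|(I-P_N)\xi\|_V \le c\,N^{-\rho}$, and combining this with the commutation/smoothing properties of $S_t$ encoded in Assumption~\ref{semigroup} yields $\sup_{0\le t\le T} N^\gamma\|(I-P_N)(S_t\xi)\|_V < \infty$ for every $\gamma\in(0,\rho)$. For the stochastic convolution, the tail $\sum_{i\notin\{1,\dots,N\}^d} b(i)(\cdots)e_i$ is exactly $(I-P_N)O^{\mathrm{st}}_t$ because the $e_i$ are eigenfunctions of the projections; bounding its supremum norm by again passing through the Sobolev embedding and extracting a factor $N^{-\gamma}$ from the high-frequency tail (using $\gamma<\rho$ to keep the remaining series summable) gives the claimed rate. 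Summing the two contributions establishes the second displayed estimate, which immediately yields Assumption~\ref{stochconv}.

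I expect the chief obstacle to be the uniform-in-time almost sure convergence in the $L^\infty$-topology rather than in $L^2$: the supremum norm is not Hilbertian, so one cannot argue by orthogonality, and one must instead invest in the fractional Sobolev/factorization machinery to obtain the continuity of sample paths and the uniform convergence of the spectral partial sums. The remaining steps — the deterministic estimate on $S_t\xi$ and the identification of the eigenfunction tail with $(I-P_N)O^{\mathrm{st}}$ — are comparatively routine once the regularity of $O^{\mathrm{st}}$ is in hand.
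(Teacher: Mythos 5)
Your overall architecture matches the paper's: the paper proves Proposition~\ref{constructO} by constructing the stochastic convolution part separately in Lemma~\ref{constructOO} (as the limit of the partial sums $O^N$ from \eqref{eq:defON}, via the factorization method of Lemma~\ref{oGalerkin} and the temporal estimate of Lemma~\ref{oholder2}) and then adding $S_t\,\xi$, whose tail is controlled exactly as you say, using $(I-P_N)S_t\xi = S_t(I-P_N)\xi$ and the contractivity of $S_t$ on $C([0,1]^d,\mathbb{R})$. Your identification of the finite sums in \eqref{approxN} with the spectral partial sums of the stochastic convolution, and of the high-frequency tail with $(I-P_N)O^{\mathrm{st}}_t$, is also correct.

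However, the concrete mechanism you commit to for the key step --- passing from moment bounds on Fourier modes to bounds in the supremum norm --- does not work under the stated hypotheses. You propose to ``estimate moments in a fractional Sobolev space $H^r$ with $r>\tfrac{d}{2}$ and use the Sobolev embedding $H^r\hookrightarrow C([0,1]^d,\mathbb{R})$.'' The only spatial regularity available is governed by $\rho$, through $\sum_i \|i\|_2^{2(\rho-1)}|b(i)|^2<\infty$, and nothing forces $\rho>\tfrac{d}{2}$; in the central example of space--time white noise in $d=1$ (i.e., $b\equiv\mathrm{const}$) one has $\rho<\tfrac12=\tfrac{d}{2}$, so the stochastic convolution simply does not lie in any $H^r$ with $r>\tfrac{d}{2}$ and the $L^2$-based embedding is unavailable. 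This is precisely the obstruction the paper discusses when comparing with Liu's $H^r$-estimates in the introduction. The paper's proof avoids it by working with the $L^p$-based Sobolev--Slobodeckij embedding $W^{\alpha,p}((0,1)^d)\hookrightarrow C([0,1]^d,\mathbb{R})$ for $\alpha p>d$ with $\alpha$ arbitrarily small and $p$ arbitrarily large (the double-integral estimates in Lemmas~\ref{oholder2} and \ref{oGalerkin}, via Lemma~\ref{esti2} and Lemma~\ref{lipbasis}), exploiting Gaussianity so that high moments cost nothing in the exponent; only an arbitrarily small amount $\alpha<\rho/4$ of spatial smoothness is spent. Your first-mentioned alternative (estimating increments directly in $C$) is the right one, but you give no mechanism for it other than the $H^r$ route, so the decisive step is missing. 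A secondary, smaller gap: the conclusion asserts finiteness of $\sup_N\sup_t N^\gamma\|O_t(\omega)-P_N O_t(\omega)\|_V$ for \emph{every} $\omega$ and every $\gamma\in(0,\rho)$; the paper obtains this from the $L^p$ bounds via a Borel--Cantelli-type argument (Lemma~2.1 in \cite{kn10}) and a subsequent modification on a null set, and your sketch does not address how the pathwise statement is extracted from the moment estimates.
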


Proposition~\ref{constructO}
follows directly from 
Lemma~\ref{constructOO} below.
Let us add some remarks
concerning 
Proposition~\ref{constructO}.
Let
$ 
  L^2( (0,1)^d , \mathbb{R} ) 
$ 
be the 
$ \mathbb{R} $-Hilbert 
space of equivalence classes 
of
$ 
  \mathcal{B}( (0,1)^d ) 
$/$
  \mathcal{B}( \mathbb{R} )
$-measurable 
and Lebesgue square integral 
functions from $ (0,1)^d $
to 
$ \mathbb{R} $
and let
$ 
  B \colon 
  L^2( (0,1)^d, \mathbb{R})
  \rightarrow
  L^2( (0,1)^d, \mathbb{R})
$ 
be a bounded linear
operator given by
\begin{equation}
\label{Bdef}
  B v = \sum_{ i \in \mathbb{N}^d }
  b(i) 
    \int_{(0,1)^d} e_i(s) \, v(s) \, ds
  \cdot
  e_i
\end{equation}
for all 
$ 
  v \in 
  L^2( (0,1)^d, \mathbb{R} ) 
$
where
$ 
  b \colon 
  \mathbb{N}^d \rightarrow \mathbb{R} 
$
is the function used in 
Proposition~\ref{constructO}.
Then the stochastic
process
$ 
  O \colon [0,T] \times \Omega
  \rightarrow 
  C( [0,1]^d, \mathbb{R} 
  )
$
in Proposition~\ref{constructO}
satisfies
\begin{equation}
\begin{split}
  O_t 
& =
  S_t \, \xi + 
  \sum_{ i \in \mathbb{N}^d }
  b(i) 
    \int^t_0 e^{
      -\lambda_i(t-s) } \,
    d\beta^i_s 
\cdot
  e_i 
  =
  S_t \, \xi + 
  \int_0^t 
  S_{ t - s } \, B \, dW_s
\end{split}
\end{equation}
$ \mathbb{P} $-a.s.\ for
every $ t \in (0,T] $
where 
$ W_t $, $ t \in [0,T]$,
is an appropriate 
cylindrical $I$-Wiener process on 
$ 
  L^2( (0,1)^d, \mathbb{R}
  ) 
$.
In particular,
$ 
  O \colon [0,T] \times \Omega
  \rightarrow 
  C( [0,1]^d, \mathbb{R} 
  )
$
is the up to indistinguishability
unique mild solution process 
of the 
linear SPDE
\begin{equation}
\label{eq:linearSPDE}
  d O_t = \big[ \Delta O_t \big] \, dt
  + B \, dW_t,
\qquad
  O_t|_{ \partial (0,1)^d } \equiv 0,
\qquad
  O_0 = \xi
\end{equation}
for 
$ 
  t \in [0,T] 
$
on
$ 
  C( [0,1]^d, \mathbb{R} ) 
$.
The process $O$ thus includes
the initial value and a
stochastic convolution
of the semigroup generated
by the Laplacian with
Dirichlet boundary conditions 
and a cylindrical Wiener process
as it is frequently considered
in the literature 
(see, e.g., 
Section 5 in \cite{dz92}).
Note also that the
operator $ B $ 
appearing
in 
\eqref{eq:linearSPDE}
is diagonal
with respect to
the orthonormal
basis
$
  e_i \in
  L^2( (0,1)^d, \mathbb{R} )
$,
$ 
  i \in \mathbb{N}^d
$,
in
$
  L^2( (0,1)^d, \mathbb{R} )
$.
This assumption is
strongly exploited in
Proposition~\ref{constructO}.
However, the abstract
setting in Section~\ref{sec2}
does not need this
assumption to be fulfilled
and, in principle, linear
operators $ B $ that
are not diagonal with
respect to 
$ e_i $, $ i \in \mathbb{N}^d $,
could be considered here.
The detailed analysis in
the non-diagonal case
remains an open question
for future research. 
The reader is referred to \cite{DBKam} 
for first results in that direction.


To illustrate 
Proposition~\ref{constructO}
we consider the following 
simple example.
If $ d = 2 $, 
$ 
  ( \xi(\omega) )(x) = 0 
$
for all 
$ x \in [0,1]^2 $, 
$ 
  \omega
  \in \Omega 
$ 
and 
$ 
  b( (i_1,i_2) )
  = \frac{1}{ ( i_1 + i_2 ) } 
$
for all 
$ 
  i = (i_1, i_2) 
  \in \mathbb{N}^2 
$
in Proposition~\ref{constructO},
then Proposition~\ref{constructO} 
implies the existence of
$ 
  \mathcal{F}
$/$
  \mathcal{B}([0,\infty))
$-measurable
mappings 
$ 
  C_\gamma 
  \colon 
  \Omega \rightarrow
  [0,\infty) 
$,
$ 
  \gamma \in (0,1)
$,
such that
\begin{equation}
\label{eq:Oerrorrate}
    \sup_{ 0 \leq t \leq 1 }
    \sup_{ x \in [0,1]^2 }
    \big| 
      O_t(\omega,x) - 
      ( P_N O_t )(\omega,x)
    \big|
  \leq
  C_{\gamma}(\omega)  
  \cdot
  N^{-\gamma}
\end{equation}
for all $ \omega \in \Omega $,
$ N \in \mathbb{N} $
and all 
$ \gamma \in (0,1) $.
Finally, note that 
Proposition~\ref{constructO} 
follows immediately
from the next result (Lemma~\ref{constructOO}), 
which is also of independent interest.
Its proof is postponed to Subsection~\ref{sec:constructOO}.
Estimates related to Lemma~\ref{constructOO} 
and its proof can, e.g., be found in
Section 5.5.1 in 
Da Prato \& 
Zabczyk~\cite{dz92} 
and in Proposition~1.1 
and Proposition~1.2 
in Da Prato
\& Debussche~\cite{GdP-AD:96}. 
In particular, the temporal regularity 
statements in Lemma~\ref{constructOO}
follow, e.g., immediately from 
Lemma~5.19 and Theorem~5.20
in \cite{dz92}.

\begin{lemma}
\label{constructOO}
Let $ d \in \mathbb{N}$,
let 
$ 
  V = C( [0,1]^d, \mathbb{R}
  ) 
$
with
$ 
  \| v \|_V =
  \| v \|_{ C( [0,1]^d, \mathbb{R} ) }
$ 
for every
$ v \in V $, 
let $ \rho \in (0,\infty) $,
let 
$ 
  \beta^i \colon [0,T] 
  \times \Omega \rightarrow
  \mathbb{R} 
$, 
$ 
  i \in \mathbb{N}^d
$,
be a family 
of independent 
standard Brownian motions
with continuous sample paths
and 
let 
$ 
  b \colon \mathbb{N}^d
  \rightarrow \mathbb{R} 
$
be a function with
$
  \sum_{ 
    i \in \mathbb{N}^d
  }
  \left( i_1^2 + \ldots + i_d^2 \right)^{ 
    \left( \rho - 1 \right) }
  | b(i) |^2 
  < \infty 
$.
Then
there exists an
up to indistinguishability
unique stochastic
process
$
  O \colon [0,T] \times \Omega
  \rightarrow V
$
which satisfies
\begin{equation*}
  \sup_{ N \in \mathbb{N} }
  \sup_{ 0 \leq t \leq T }\!\big(
  N^{\gamma} 
  \left\|
    O_t(\omega) - P_N( O_t(\omega) )
  \right\|_V 
  \!
  \big)
  +
  \sup_{0 \leq t_1 < t_2 \leq T}
  \frac{
    \left\| O_{t_2}(\omega)
    - O_{t_1}(\omega) \right\|_V
  }{
    \left| t_2 - t_1 \right|^\theta
  } 
  < \infty
\end{equation*}
for every 
$ \omega \in \Omega $,
every
$ 
  \theta \in 
  (
    0,
    \min(\frac{1}{2},\frac{\rho}{2}
    )
  )
$, 
every 
$
  \gamma \in (0,\rho)
$
and which satisfies
\begin{equation*}
  \sup_{ N \in \mathbb{N} }
  \Big\{
  N^{\gamma}
 \Big(
  \mathbb{E}\Big[
    \sup_{0\leq t \leq T}
    \| O_t - P_N( O_t )\|_V^p\Big]
  \Big)^{ \! \frac{1}{p} }
  \Big\}
  +
  \sup_{0 \leq t_1 < t_2 \leq T}\!\!
  \frac{
    \left(
    \mathbb{E}\!\left[
      \| O_{t_2} - O_{t_1} \|_V^p
    \right]
    \right)^{\frac{1}{p}}
  }{
    \left| 
      t_2 - t_1 
    \right|^{\theta}
  }
  < \infty
\end{equation*}
and
\begin{equation*}
  \mathbb{P}\!\left[
    \lim_{N \rightarrow \infty}\,
      \sup_{0\leq t \leq T}
      \Big\| O_t - 
        \sum_{ i \in 
        \left\{ 1, \dots, N 
        \right\}^d }
        b(i) 
        \Big( 
        -\lambda_i
        \int^t_0
        e^{ 
        - \lambda_i (t-s) }
        \beta^i_s \, ds
        + \beta^i_t \Big) 
        \cdot e_i
      \Big\|_V
    = 0
  \right] = 1
\end{equation*}
for every $ p \in [1,\infty)$,
every 
$ 
  \theta \in 
  (0,\frac{\rho}{2})
  \cap
  [0,\frac{1}{2}]
$
and every 
$  
  \gamma \in (0,\rho)
$.
Here 
the functions $ e_i \in V $,
$ i \in \mathbb{N}^d $,
the real numbers
$ \lambda_i $, $ i \in \mathbb{N}^d$,
and the linear operators 
$ 
  P_N \colon V \rightarrow V 
$,
$ N \in \mathbb{N}$, 
are 
given
in 
\eqref{deflambdai}
and \eqref{PNex}.
\end{lemma}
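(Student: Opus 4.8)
The plan is to construct $O$ as the almost surely uniform limit of its spectral truncations and to extract every assertion from second-moment computations on the scalar Ornstein--Uhlenbeck modes, promoted to $L^p$ by Gaussianity. For $i\in\mathbb{N}^d$ write $|i|:=(i_1^2+\ldots+i_d^2)^{1/2}$, so that $\lambda_i=\pi^2|i|^2$, and set $Z^i_t:=\int_0^t e^{-\lambda_i(t-s)}\,d\beta^i_s$. Integration by parts gives the pathwise identity $Z^i_t=\beta^i_t-\lambda_i\int_0^t e^{-\lambda_i(t-s)}\beta^i_s\,ds$, so the truncations $O^N_t:=\sum_{i\in\{1,\dots,N\}^d}b(i)\,Z^i_t\,e_i$ have continuous sample paths in $V$ and agree with $P_N O_t$ (because $P_N e_i=e_i$ for $i\in\{1,\dots,N\}^d$ and $P_N e_i=0$ otherwise). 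Hence the approximation estimate and the convergence statement in the last display coincide, and it suffices to control the centred Gaussian tail field $R^N_t(x):=\sum_{i\notin\{1,\dots,N\}^d}b(i)\,Z^i_t\,e_i(x)$, where throughout $C$ denotes a finite constant, independent of $N$, $t$, $x$, that may change from occurrence to occurrence.

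First I would record three elementary mode bounds, uniform in $i$ and $t$: the variance bound $\mathbb{E}|Z^i_t|^2\le\tfrac{1}{2\lambda_i}$; the temporal increment bound $\mathbb{E}|Z^i_{t_2}-Z^i_{t_1}|^2\le C\min(|t_2-t_1|,\lambda_i^{-1})\le C\,|t_2-t_1|^{2\theta}\lambda_i^{-(1-2\theta)}$ for $\theta\in[0,\tfrac12]$; and the eigenfunction bound $|e_i(x)-e_i(y)|\le C\min(1,|i|\,|x-y|)\le C(|i|\,|x-y|)^{\kappa}$ for $\kappa\in[0,1]$. By independence of the $\beta^i$ the base-point, temporal and spatial second moments of $R^N$ decouple into sums over $i\notin\{1,\dots,N\}^d$; inserting the mode bounds and $\lambda_i=\pi^2|i|^2$ turns each such sum into a weighted tail of the convergent series $\sum_i|i|^{2(\rho-1)}|b(i)|^2$. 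Since $|i|>N$ on the summation range, the rate is harvested by writing, e.g., $|i|^{2\kappa-2}=|i|^{2(\rho-1)}|i|^{2\kappa-2\rho}\le N^{-2\gamma}|i|^{2(\rho-1)}$ once $\kappa\le\rho-\gamma$. Fixing $\gamma<\rho$ and choosing $\theta<\min(\tfrac12,\tfrac{\rho-\gamma}{2})$ and $\kappa<\min(1,\rho-\gamma)$ then yields $\mathbb{E}|R^N_t(x)|^2\le CN^{-2\gamma}$, $\mathbb{E}|R^N_{t_2}(x)-R^N_{t_1}(x)|^2\le CN^{-2\gamma}|t_2-t_1|^{2\theta}$ and $\mathbb{E}|R^N_t(x)-R^N_t(y)|^2\le CN^{-2\gamma}|x-y|^{2\kappa}$ with $C$ independent of $N$; the same computation without the projection, for any $\theta<\min(\tfrac12,\tfrac{\rho}{2})$ and $\kappa<\min(1,\rho)$, controls the increments of $O$ itself.

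Next I would pass to $L^p$ for every $p\in[1,\infty)$ by the equivalence of Gaussian moments, so that each second moment above becomes a $p$-th moment bound with an extra $p$-dependent constant, and then apply the Kolmogorov--Chentsov / Garsia--Rodemich--Rumsey continuity theorem (as in Section~5.5.1 of \cite{dz92}) to the jointly H\"older field $(t,x)\mapsto R^N_t(x)$ on $[0,T]\times[0,1]^d$. Choosing $p$ large enough that $\theta p>1$ and $\kappa p>d$, the base-point and increment bounds convert into the uniform estimate $\big(\mathbb{E}[\sup_{0\le t\le T}\sup_{x}|R^N_t(x)|^p]\big)^{1/p}\le CN^{-\gamma}$, which is precisely the $L^p$ assertion, while the unprojected version yields continuous $V$-valued sample paths, the $L^p$ temporal H\"older bound, and (directly from Kolmogorov) an almost surely finite temporal H\"older seminorm. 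The pathwise spatial statement follows from Tonelli's theorem: for $\gamma<\rho$ pick $\gamma<\gamma'<\rho$ and $p$ with $(\gamma'-\gamma)p>1$, so that $\sum_{N}N^{\gamma p}\,\mathbb{E}[\sup_t\|O_t-P_N O_t\|_V^p]\le C\sum_N N^{(\gamma-\gamma')p}<\infty$, whence $\sup_N\sup_t N^{\gamma}\|O_t-P_N O_t\|_V<\infty$ almost surely and in particular $\sup_t\|O_t-O^N_t\|_V\to0$ almost surely. Uniqueness up to indistinguishability is immediate, since any process obeying the last display is the almost surely uniform limit of the same truncations $O^N$.

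The step I expect to be most delicate is the simultaneous bookkeeping in the second paragraph: the approximation rate $\gamma$, the temporal H\"older exponent $\theta$ and the spatial H\"older exponent $\kappa$ must all be financed from the single budget $\sum_i|i|^{2(\rho-1)}|b(i)|^2<\infty$, while the continuity theorem demands $\theta,\kappa$ strictly positive and the rate extraction demands $\theta<\tfrac{\rho-\gamma}{2}$ and $\kappa\le\rho-\gamma$. Checking that these constraints are jointly satisfiable for every $\gamma<\rho$---and that, dropping the projection, they relax exactly to the claimed $\theta<\min(\tfrac12,\tfrac{\rho}{2})$ regularity---is the crux; the Gaussian-moment comparison, the continuity theorem and the Tonelli/Borel--Cantelli upgrade are then routine.
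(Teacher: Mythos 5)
Your proposal is correct in substance and shares most of its skeleton with the paper's proof: the same spectral truncations $O^N$, the same three elementary mode bounds (they appear in the paper as Lemma~\ref{oholder}, Lemma~\ref{lipbasis} and Lemma~\ref{esti2}), the same promotion of second moments to $p$-th moments via Gaussianity together with Jensen's inequality for small $p$, the same Cauchy-sequence construction of $O$, and the same Chebyshev/Borel--Cantelli upgrade from $L^p$ rates to pathwise rates (the paper outsources this to Lemma~2.1 in \cite{kn10}, you do it by hand with $\gamma<\gamma'<\rho$ and $(\gamma'-\gamma)p>1$). The one place where you genuinely diverge is the passage from pointwise moment bounds to the supremum over \emph{time}. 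The paper never invokes a joint space--time continuity theorem: for the Galerkin error it uses the factorization identity $O^N_t-O^M_t=\frac{\sin(\pi\alpha)}{\pi}\int_0^t(t-s)^{\alpha-1}S_{t-s}Y^{N,M}_s\,ds$ (Lemma~\ref{oGalerkin}), so that only fixed-$t$ spatial Sobolev embeddings of the auxiliary field $Y^{N,M}$ are needed, the $\sup_t$ being absorbed by H\"older's inequality and the contractivity of $S$; the temporal H\"older regularity is handled separately (Lemma~\ref{oholder2} plus the one-parameter Kolmogorov theorem). You instead apply an anisotropic Kolmogorov--Chentsov/GRR theorem directly to the tail field on $[0,T]\times[0,1]^d$. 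This is a legitimate alternative and arguably more self-contained, but note two small points: (i) the precise multiparameter continuity criterion requires $p$ a bit larger than your stated $\theta p>1$, $\kappa p>d$ (e.g.\ $\min(\theta,\kappa)p>d+1$ for the crude Euclidean version, or $p>1/\theta+d/\kappa$ for a chaining argument) --- immaterial since $p$ is free, but worth stating correctly, and one must also check that the GRR constant is linear in the H\"older majorant so that the factor $N^{-\gamma}$ survives the $\sup$; (ii) the lemma asserts the pathwise bounds for \emph{every} $\omega\in\Omega$, not merely almost surely, so at the end you must, as the paper does, replace the a.s.\ limit by a suitable modification on the exceptional null set. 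Neither point is a gap in the mathematics, only in the bookkeeping.
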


%
%
\subsection{Stochastic
evolution equations with
a globally Lipschitz nonlinearity}
\label{stochreaction}
If the nonlinearity
$ 
  F \colon V \rightarrow W 
$ 
given
in Assumption~\ref{drift}
is globally Lipschitz continuous
from $V$ to $W$, 
then Assumption~\ref{solution}
is naturally met.
\begin{proposition}
\label{Lipschitz}
Suppose that Assumptions
\ref{semigroup}-\ref{stochconv}
are fulfilled.
If the nonlinearity 
$ F \colon V \rightarrow W $
given in Assumption \ref{drift}
additionally satisfies
$
  \sup_{ v, w \in V, v \neq w }
  \frac{\| F(v) - F(w)\|_W}{\| v - w \|_V }
  < \infty ,
$
then Assumption \ref{solution}
is fulfilled.
\end{proposition}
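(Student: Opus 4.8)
The plan is to solve, for each fixed $N \in \mathbb{N}$ and each fixed $\omega \in \Omega$, the deterministic Volterra integral equation in \eqref{GalSODEs} by a fixed-point argument in the Banach space $C([0,T],V)$, and then to derive the required uniform bound by a weakly singular Gronwall inequality. First I would record the $N$-independent estimate $\| P_N S_t \|_{L(W,V)} \le \hat{M}\, t^{-\alpha}$ for all $t \in (0,T]$ and all $N \in \mathbb{N}$, with some $\hat{M} \in (0,\infty)$ not depending on $N$; this follows from Assumption~\ref{semigroup} by writing $P_N S_t = S_t - (S_t - P_N S_t)$ and using both displayed bounds there, so that no uniform bound on $\| P_N \|_{L(V)}$ is needed. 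The extra global Lipschitz hypothesis supplies a constant $L \in [0,\infty)$ with $\| F(v) - F(w) \|_W \le L \| v - w \|_V$ and hence $\| F(v) \|_W \le \| F(0) \|_W + L \| v \|_V$ for all $v,w \in V$.

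Next, for fixed $N$ and $\omega$ I would introduce the map $\Phi \colon C([0,T],V) \to C([0,T],V)$ defined by $(\Phi(Y))_t = \int_0^t P_N S_{t-s} F(Y_s)\, ds + P_N(O_t(\omega))$; the remark following Assumption~\ref{solution} guarantees that the Bochner integral is well defined, and continuity of $t \mapsto (\Phi(Y))_t$ follows together with continuity of the sample paths of $O$. Combining the estimate from the previous step with the Lipschitz bound gives $\| (\Phi(Y))_t - (\Phi(Z))_t \|_V \le \hat{M} L \int_0^t (t-s)^{-\alpha} \| Y_s - Z_s \|_V\, ds$. Since $\alpha \in [0,1)$ the kernel $(t-s)^{-\alpha}$ is integrable, and iterating this inequality shows that a sufficiently high power $\Phi^{(n)}$ of $\Phi$ is a strict contraction (the iterated kernels are controlled by $(\hat{M} L\, \Gamma(1-\alpha))^n\, t^{n(1-\alpha)} / \Gamma(n(1-\alpha)+1) \to 0$). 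By the generalized Banach fixed-point theorem this yields, for every $N$ and every $\omega$, a unique $X^N(\cdot,\omega) \in C([0,T],V)$ solving the first identity in \eqref{GalSODEs}; in particular $X^N$ has continuous sample paths.

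For the measurability needed to call $X^N$ a stochastic process I would identify $X^N(\cdot,\omega)$ with the uniform limit of the Picard iterates $X^{N,0}_t(\omega) := P_N(O_t(\omega))$ and $X^{N,n+1} := \Phi(X^{N,n})$. Each iterate is a stochastic process, since measurability in $\omega$ is preserved by the bounded (hence continuous) operator $P_N$, by composition with $F$, and by Bochner integration of the jointly measurable, sample-path-continuous integrand; and for each $t$ the map $\omega \mapsto X^N_t(\omega)$ is the pointwise limit of the maps $\omega \mapsto X^{N,n}_t(\omega)$. Because pointwise limits of $\mathcal{F}$/$\mathcal{B}(V)$-measurable maps into a metric space are again $\mathcal{F}$/$\mathcal{B}(V)$-measurable (test against closed sets $C$ via the $1$-Lipschitz functions $v \mapsto \operatorname{dist}(v,C)$), this conclusion holds even though $V$ need not be separable.

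Finally, to verify the uniform bound in \eqref{GalSODEs} I would insert $\| F(v) \|_W \le \| F(0) \|_W + L \| v \|_V$ into the fixed-point identity to obtain $\| X^N_t(\omega) \|_V \le A(\omega) + \hat{M} L \int_0^t (t-s)^{-\alpha} \| X^N_s(\omega) \|_V\, ds$, where $A(\omega) := \hat{M} \| F(0) \|_W\, \tfrac{T^{1-\alpha}}{1-\alpha} + \sup_{N \in \mathbb{N}} \sup_{0 \le t \le T} \| P_N(O_t(\omega)) \|_V$ is finite and, crucially, independent of $N$: by Assumption~\ref{stochconv} and continuity of the paths of $O$ one has $\sup_N \sup_t \| P_N(O_t(\omega)) \|_V \le \sup_t \| O_t(\omega) \|_V + \sup_N \sup_t \| O_t(\omega) - P_N(O_t(\omega)) \|_V < \infty$. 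Applying the weakly singular Gronwall lemma then produces a bound $\bar{A}(\omega)$ depending only on $A(\omega), \hat{M} L, \alpha$ and $T$ but not on $N$, which is precisely the second requirement in \eqref{GalSODEs}. I expect the main obstacle to be handling the weak singularity $(t-s)^{-\alpha}$ uniformly in $N$ — both the iterated-contraction estimate and the singular Gronwall step rely on it — and the point that makes everything uniform is the $N$-free bound on $\| P_N S_t \|_{L(W,V)}$ extracted from Assumption~\ref{semigroup}, rather than any boundedness of the projections themselves.
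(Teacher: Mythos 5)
Your proof is correct. The paper omits the proof of this proposition as ``straightforward,'' and your argument is exactly the one the authors intend: the $N$-uniform bound $\| P_N S_t \|_{L(W,V)} \leq \hat{M} t^{-\alpha}$ extracted from Assumption~\ref{semigroup} via $P_N S_t = S_t - (S_t - P_N S_t)$, a Picard iteration in $C([0,T],V)$ for fixed $N$ and $\omega$ with the iterated weakly singular kernel estimate, and the singular Gronwall inequality (Lemma~7.1.11 in Henry, which the paper itself invokes in the proof of Theorem~\ref{mainthm}) for the $N$-independent a priori bound, with $\sup_{N}\sup_{t}\|P_N(O_t(\omega))\|_V<\infty$ supplied by Assumption~\ref{stochconv} and path continuity of $O$. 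Your handling of measurability on the possibly non-separable $V$ is also sound, since the iterates take values in the separable finite-dimensional subspaces $P_N(V)$, in line with the paper's own remarks on this issue.
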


The proof of Proposition~\ref{Lipschitz}
is straightforward and therefore omitted.
In the remainder of this section 
we illustrate Theorem \ref{mainthm}
with a stochastic reaction diffusion
equation with a globally Lipschitz nonlinearity.
The next lemma describes
the nonlinearities considered
in this subsection.
Its proof is clear and
hence omitted.

\begin{lemma}\label{constructF}
Let $ d \in \mathbb{N} $ and
let $ f \colon [0,1]^d 
\times \mathbb{R}
\rightarrow \mathbb{R} $ be a
continuous function which
satisfies
\begin{equation}
  \sup_{ x \in [0,1]^d }
  \sup_{ 
\substack{
      y_1, y_2 \in \mathbb{R}\\
      y_1 \neq y_2
     }}
  \frac{| f(x,y_1) - f(x,y_2)|}{| y_1 - y_2 |}
  < \infty .
\end{equation}
Then the corresponding Nemytskii
operator
$ 
  F \colon C ( [0,1]^d, \mathbb{R}  )
  \rightarrow  C ( [0,1]^d, \mathbb{R}  )
$ 
given by
$
  ( F( v ))(x)= f( x, v(x) )
$
for every
$  x \in [0,1]^d
$
and every 
$ v \in C( [0,1]^d, \mathbb{R}) $
satisfies 
\begin{equation}
  \sup_{ 
    v, w \in V, v \neq w 
  } 
  \frac{
    \| F(v) - F(w) \|_{
      C( [0,1]^d, \mathbb{R} )
    }   
  }{ 
    \| v - w \|_{ 
      C( [0,1]^d, \mathbb{R} ) 
    }
  } 
  < \infty .
\end{equation}
\end{lemma}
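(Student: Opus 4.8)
The plan is to exploit that the hypothesis on $f$ is exactly a Lipschitz condition in the second argument which is uniform in the first, and to transfer this pointwise bound directly to the supremum norm. First I would abbreviate the finite quantity in the hypothesis by
\[
  L := \sup_{ x \in [0,1]^d }
  \sup_{ \substack{ y_1, y_2 \in \mathbb{R} \\ y_1 \neq y_2 } }
  \frac{ | f(x,y_1) - f(x,y_2) | }{ | y_1 - y_2 | }
  < \infty ,
\]
so that the estimate $ | f(x,y_1) - f(x,y_2) | \leq L \, | y_1 - y_2 | $ holds for every $ x \in [0,1]^d $ and all $ y_1, y_2 \in \mathbb{R} $.

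Before estimating, I would check that $F$ is well defined as a mapping into $ C( [0,1]^d, \mathbb{R} ) $. For fixed $ v \in C( [0,1]^d, \mathbb{R} ) $ the mapping $ [0,1]^d \ni x \mapsto ( x, v(x) ) \in [0,1]^d \times \mathbb{R} $ is continuous, and since $f$ is continuous by assumption, the composition $ x \mapsto f( x, v(x) ) = ( F(v) )(x) $ is continuous; hence $ F(v) \in C( [0,1]^d, \mathbb{R} ) $, as required by the codomain of $F$.

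The Lipschitz estimate then follows pointwise. For arbitrary $ v, w \in C( [0,1]^d, \mathbb{R} ) $ and every $ x \in [0,1]^d $ the bound on $f$ gives
\[
  | ( F(v) )(x) - ( F(w) )(x) |
  = | f( x, v(x) ) - f( x, w(x) ) |
  \leq L \, | v(x) - w(x) |
  \leq L \, \| v - w \|_{ C( [0,1]^d, \mathbb{R} ) } ,
\]
where the last inequality uses the definition of the supremum norm. Taking the supremum over $ x \in [0,1]^d $ on the left-hand side yields $ \| F(v) - F(w) \|_{ C( [0,1]^d, \mathbb{R} ) } \leq L \, \| v - w \|_{ C( [0,1]^d, \mathbb{R} ) } $, and dividing by $ \| v - w \|_{ C( [0,1]^d, \mathbb{R} ) } $ for $ v \neq w $ and taking the supremum over all such pairs produces the claimed bound with value at most $L$.

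I do not anticipate a genuine obstacle here: the argument is entirely elementary once the hypothesis is read as a uniform-in-$x$ Lipschitz bound, and the only point requiring a (trivial) remark is the preservation of continuity under composition, which guarantees that $F$ indeed takes values in the stated codomain.
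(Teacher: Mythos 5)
Your proof is correct and is exactly the elementary argument the authors had in mind: the paper states that the proof of this lemma ``is clear and hence omitted,'' and the intended reasoning is precisely your pointwise transfer of the uniform-in-$x$ Lipschitz bound to the supremum norm, together with the (routine) observation that $F$ maps into $C([0,1]^d,\mathbb{R})$ by continuity of compositions.
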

Let 
$ d \in \{ 1, 2, 3 \} $
and
$ 
  V = W = C( [0,1]^d, \mathbb{R} ) 
$
and let
$ 
  P_N \colon V \rightarrow V 
$, 
$
  N \in \mathbb{N}
$,
$ 
  S \colon (0,T] \rightarrow L(V) 
$,
$ 
  F \colon V \rightarrow V 
$
and 
$ 
  O \colon [0,T] \times \Omega 
  \rightarrow V 
$
be given by 
\eqref{PNex}, 
by \eqref{eq:defS},
by
Lemma~\ref{constructF}
and 
by Proposition~\ref{constructO}.
Then
Assumption~\ref{solution} 
is fulfilled due to 
Proposition~\ref{Lipschitz}
and therefore, the 
assumptions in
Theorem~\ref{mainthm} 
are fulfilled.
In addition, the stochastic 
evolution 
equation~\eqref{eqspdesol}
reduces in this case to
\begin{equation}
\label{herespde2}
  d X_t
  = 
  \big[
    \Delta
    X_t
  +
    f( \cdot, X_t ) 
  \big] \, dt 
  +
  B\,dW_t,
\qquad
  X_t|_{\partial (0,1)^d} \equiv 0,
\qquad
  X_0 = \xi
\end{equation}
for $ t \in [0,T]$, where
$ W_t $, $ t \in [0,T] $, is a 
cylindrical $I$-Wiener process
on $L^2( (0,1)^d, \mathbb{R}
) $, where
$ 
  \xi \colon \Omega \rightarrow V 
$
is used in 
Proposition~\ref{constructO}
and where the bounded linear operator
$ 
  B \colon 
  L^2( (0,1)^d, \mathbb{R} )
$
$
  \rightarrow 
$
$
  L^2( (0,1)^d, \mathbb{R} 
  )
$
is given by \eqref{Bdef}
with 
$ 
  b \colon \mathbb{N}^d 
  \rightarrow \mathbb{R} 
$
used in 
Proposition~\ref{constructO}.
Moreover, the finite dimensional 
SODEs~\eqref{GalSODEs} 
reduce to
\begin{equation*}
  d X^N_t = 
  \left[ 
    \Delta X^N_t
    + P_N f(\cdot, X^N_t) 
  \right] dt
  +
  P_N B \, dW_t,
\quad
  X^N_t|_{\partial (0,1)^d}
  \equiv
  0 ,
\quad
  X_0^N = P_N (\xi)
\end{equation*}
for $ t \in [0,T]$
and $ N \in \mathbb{N} $.
If $ d = 1 $ and
$ b(i) = b(1) $ 
for all $ i \in \mathbb{N} $,
then Lemma~\ref{constructS},
Proposition~\ref{constructO}
and Theorem~\ref{mainthm} 
yield the existence of 
$ \mathcal{F} $/$\mathcal{B}([0,\infty))$-measurable
mappings 
$ 
  C_\gamma \colon \Omega 
  \rightarrow [0,\infty) 
$,
$ 
  \gamma \in (0,\frac{1}{2})
$,
such that
\begin{equation}\label{resex1}
  \sup_{0 \leq t \leq T}
  \sup_{0 \leq x \leq 1}
  \big|
    X_t(\omega,x) - X^N_t(\omega,x)
  \big|
  \leq
  C_\gamma(\omega)
  \cdot
  N^{ - \gamma }
\end{equation}
for every $ \omega \in \Omega $,
$ N \in \mathbb{N} $
and every $ \gamma \in ( 0, \frac{ 1 }{ 2 } ) $.
Hence, in the case $ d = 1 $ and
$ b(i) = b(1) $ 
for all $ i \in \mathbb{N} $,
we obtain that
$ X^N_t(\omega,x) $
converges to $ X_t(\omega,x) $ uniformly
in $ t \in [0,T] $ and
$ x \in [0,1] $ with the
rate $ \frac{ 1 }{ 2 } - $
as $ N $ goes to infinity
for every $ \omega \in \Omega $.

\subsection{Stochastic
Burgers equation}
\label{stochburger}
In this subsection a stochastic
Burgers equation 
is formulated in the setting of
Section~\ref{sec2}.
For this a few function spaces
from the literature
(see, e.g., Chapter~5 in \cite{r01})
are presented first.
By
$
  ( 
    L^2( (0,1), \mathbb{R} ) ,
$
$
    \left\| \cdot \right\|_{L^2},
$
$
    \left< \cdot, \cdot \right>_{L^2}
  )
$
the $ \mathbb{R} $-Hilbert space of
equivalence classes of 
$
  \mathcal{B}(0,1)
$/$
  \mathcal{B}(\mathbb{R})
$-measurable
and Lebesgue square integrable
functions from $ (0,1) $ to $ \mathbb{R} $
with 
scalar product 
$
  \left< v, w \right>_{ L^2 }
  :=
  \int^1_0 v(x) \, w(x) \, dx
$
and norm
$
  \| v \|_{ L^2 }
  :=(\left< v, v \right>_{ L^2 })^{ 1/2 }
$
for every 
$ 
  v, w \in 
  L^2( (0,1), \mathbb{R} ) 
$
is denoted.
In addition, by
$
  H^1( (0,1), \mathbb{R}
  )
$
the Sobolev space
of weakly differentiable 
functions from $ (0,1) $ to $ \mathbb{R} $ with weak
derivatives in
$
  L^2( (0,1), \mathbb{R} )
$
is denoted.
The norm and the scalar product
in 
$
  H^1( (0,1), \mathbb{R}
  )
$
are defined by
$
  \| v \|_{ H^1 }
  :=
  (
    \| 
      v 
    \|_{ L^2 }^2
    +
    \| 
      v' 
    \|_{ L^2 }^2
  )^{ 
    1 / 2  
  } 
$
and
$
  \left< v, w \right>_{ H^1 }
  :=
  \left< 
    v ,
    w
  \right>_{ L^2 } 
  +
  \left< 
    v' ,
    w'
  \right>_{ L^2 } 
$
for every 
$ 
  v, w \in 
  H^1( (0,1), \mathbb{R} ) 
$.
Additionally, by 
$
  H^1_0( (0,1), \mathbb{R} )
$
the closure of 
$ 
  C^\infty_{\mathrm{cpt}}( 
    (0,1), 
    \mathbb{R}
  )
$
in the 
$\mathbb{R}$-Hilbert space
$
  (
$
$
    H^1( 
      (0,1), \mathbb{R}
    ),
$
$
    \left\| \cdot \right\|_{ H^1 }
    , 
$
$
    \left< \cdot, \cdot \right>_{ H^1 }
  )
$ 
is denoted
and the norm and the scalar
product in
$
  H^1_0( (0,1), \mathbb{R}
  ) 
$
are denoted by
$
  \| v \|_{ H^1_0 }
  :=
  \| v' \|_{L^2}
$
and
$
  \left< v, w \right>_{ H^1_0 }
  :=
  \left< v', w' \right>_{ L^2 }
$
for every 
$
  v, w \in 
  H^1_0( (0,1), \mathbb{R}
  ) 
$. 
The Sobolev space
$
  (
    H^{ - 1 }((0,1), \mathbb{R} ),
    \left\| \cdot \right\|_{ H^{ - 1 } }
  )
$
$
  :=
$
$
  (
    H^1_0( (0,1), \mathbb{R}
    ), \left\| \cdot \right\|_{ H^1_0 }
  )'
$
is also used below and by
$ 
  \partial \colon 
  L^2( 
    (0,1), \mathbb{R}
  )
  \rightarrow
  H^{-1}( ( 0,1), \mathbb{R} )
$ 
the distributional derivative in
$
  L^2( 
    (0,1), \mathbb{R}
  )
$
defined by
$
  ( \partial v  )(\varphi)
  = ( v' )(\varphi)
  := - \left< v, \varphi' \right>_{L^2}
$
for every 
$ 
  \varphi \in 
  H^1_0( (0,1), \mathbb{R}
  ) 
$ 
and every
$ 
  v \in 
  L^2( (0,1), \mathbb{R} ) 
$
is denoted.

In view of these 
function spaces, let 
$ 
  W = 
  H^{-1}( (0,1) , \mathbb{R}
  )
$
with
$
  \| v \|_W
  :=
  \| v \|_{ H^{ - 1 } }
$
for all $ v \in W $
and let
$ 
  V = 
  C( [0,1],
    \mathbb{R} 
  ) 
$
with
$
  \| v \|_V :=
  \sup_{ x \in [0,1] }
  | v(x) |
$
for all $ v \in V $
be the $\mathbb{R}$-Banach 
space of continuous
functions from $ [0,1] $
to $ \mathbb{R} $.
As in Sections~\ref{stochheat} and \ref{stochreaction},
we use
the projection operators
$ 
  P_N \colon 
  C( [0,1], \mathbb{R}
  ) 
  \rightarrow 
  C( [0,1], \mathbb{R})
$,
$ 
  N \in \mathbb{N} 
$,
defined by
\begin{equation}
\label{projoperator}
  \big( P_N( v ) \big)(x)
  :=
  \sum^N_{n=1}
  2
    \int^1_0
      \sin( n \pi s ) \, v(s) \,
    ds
  \cdot
  \sin( n \pi x )
\end{equation}
for every $ x \in [0,1]$,
$ 
  v \in C ( [0,1], \mathbb{R})  
$ 
and every $N \in \mathbb{N} $.
The semigroup is constructed in
the following well-known lemma here.

\begin{lemma}
\label{BurgerSG}
The mapping 
$ 
  S \colon (0,T] \rightarrow 
  L\big( 
    H^{-1}( (0,1), \mathbb{R}
    ) , 
    C( [0,1], \mathbb{R}
    )
  \big) 
$
given by
$
  \big( S_t(w) \big)( x ) =
  \sum^{\infty}_{n=1}
   2 \cdot e^{ - n^2 \pi^2 t }
  \cdot 
  w(
    \sin( n \pi ( \cdot ) ) 
  )
  \cdot
  \sin( n \pi x ) 
$
for every $ x \in [0,1] $,
$ 
  w \in 
  H^{-1}( 
    (0,1), \mathbb{R}
  ) 
$ 
and every $ t \in (0,T] $
is well defined and satisfies
Assumption~\ref{semigroup}
for every $ \gamma \in (0,\frac{1}{2}) $.
\end{lemma}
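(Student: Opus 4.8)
The plan is to pass to the spectral (Fourier sine) representation and reduce Assumption~\ref{semigroup} to two elementary Gaussian-type sum estimates. Writing $e_n=\sqrt{2}\,\sin(n\pi\,\cdot\,)$ and $\lambda_n=n^2\pi^2$ (the $d=1$ instances of \eqref{deflambdai}), the functions $e_n$, $n\in\mathbb{N}$, form an orthonormal basis of $L^2((0,1),\mathbb{R})$, and since $\langle e_n,e_m\rangle_{H^1_0}=\langle e_n',e_m'\rangle_{L^2}=\lambda_n\,\delta_{nm}$ (integrate by parts using $-e_n''=\lambda_n e_n$ and the vanishing boundary values), the rescaled family $\{e_n/\sqrt{\lambda_n}\}_{n\in\mathbb{N}}$ is an orthonormal basis of $H^1_0((0,1),\mathbb{R})$. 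Dualizing via the Riesz isomorphism $H^{-1}=(H^1_0)'$, I would first record that every $w\in H^{-1}((0,1),\mathbb{R})$ satisfies
\[
  \|w\|_{H^{-1}}^2 = \sum_{n=1}^\infty \frac{|w(e_n)|^2}{\lambda_n}.
\]
Setting $v_n:=w(e_n)$, a short computation (using $\sin(n\pi\,\cdot\,)=e_n/\sqrt2$ and $w(\sin(n\pi\,\cdot\,))=v_n/\sqrt2$) rewrites the defining series as $S_t w=\sum_{n=1}^\infty e^{-\lambda_n t}\,v_n\,e_n$, which identifies $S$ as the Dirichlet heat semigroup acting from $H^{-1}$ into $C([0,1])$.

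The core of the argument is then a single chain of inequalities. Since $\|e_n\|_{C([0,1])}=\sqrt2$ for every $n$, I would bound, for any index set $A\subseteq\mathbb{N}$,
\[
  \Big\|\sum_{n\in A} e^{-\lambda_n t}\,v_n\,e_n\Big\|_{C([0,1])}
  \le \sqrt2\sum_{n\in A} e^{-\lambda_n t}\,|v_n|
  \le \sqrt2\,\Big(\sum_{n\in A} \lambda_n\,e^{-2\lambda_n t}\Big)^{1/2}\Big(\sum_{n\in A}\frac{|v_n|^2}{\lambda_n}\Big)^{1/2}
\]
by Cauchy--Schwarz, the last factor being at most $\|w\|_{H^{-1}}$. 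The whole problem thus reduces to the elementary estimate $\sum_{n=1}^\infty \lambda_n^{\beta}\,e^{-2\lambda_n t}\le C_\beta\,t^{-\beta-1/2}$ for $\beta\ge0$ and $t\in(0,T]$, which follows by comparing the sum with the Gaussian integral $\int_0^\infty x^{2\beta}\,e^{-2\pi^2 t x^2}\,dx$ (the exponent $-\tfrac12$ reflecting the one-dimensional spectral counting $\#\{n:\lambda_n\le R\}\sim R^{1/2}$). Taking $A=\mathbb{N}$ and $\beta=1$ gives $\|S_t\|_{L(H^{-1},C)}\le C\,t^{-3/4}$. For the projection bound I would use $P_N S_t w=\sum_{n\le N} e^{-\lambda_n t}v_n e_n$ (immediate from \eqref{projoperator}), so $A=\{n:n>N\}$; there $\lambda_n^{-\gamma}\le \pi^{-2\gamma}N^{-2\gamma}$, and factoring out $N^{-2\gamma}$ before applying the sum estimate with $\beta=1+\gamma$ yields
\[
  \|S_t-P_N S_t\|_{L(H^{-1},C)}\le C\,N^{-\gamma}\,t^{-(3/4+\gamma/2)}.
\]
Choosing $\alpha:=\tfrac34+\tfrac{\gamma}{2}$ then handles both bounds simultaneously, since on $(0,T]$ the first estimate's smaller exponent $3/4$ is absorbed into the larger one up to the constant $T^{\gamma/2}$.

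Finally I would verify well-definedness and strong continuity. For fixed $t>0$ the tail estimate above shows the partial sums of $\sum_n e^{-\lambda_n t}v_n e_n$ are Cauchy in $C([0,1])$, so $S_t w$ is a well-defined element of $C([0,1])$ and $S_t\in L(H^{-1},C)$ by linearity together with the $t^{-3/4}$ bound. For strong continuity at a fixed $t_0\in(0,T]$, on the interval $[t_0/2,T]$ the same Cauchy--Schwarz bound with the uniform dominating factor $e^{-\lambda_n t_0/2}$ shows the series converges uniformly in $t$; since each partial sum is continuous in $t$, the limit $t\mapsto S_t w$ is continuous, as required. I expect the only genuinely delicate point to be the bookkeeping of exponents: one must confirm that the regularity rate $\gamma$ and the time-singularity order $\alpha$ can be chosen compatibly, i.e.\ that $\gamma<\tfrac12$ is exactly the condition ensuring $\alpha=\tfrac34+\tfrac\gamma2<1$, so that $S$ meets the standing requirement $\alpha\in[0,1)$ of Assumption~\ref{semigroup}. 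Everything else --- the $H^{-1}$ coefficient identity and the Gaussian sum bound --- is routine once the spectral representation is in place.
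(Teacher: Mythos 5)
The proposal is correct and takes essentially the same approach as the paper's proof: both pass to the Fourier sine representation, use the dual-norm identity $\|w\|_{H^{-1}}^2=\sum_n \lambda_n^{-1}|w(e_n)|^2$, apply Cauchy--Schwarz to the (tail of the) series, and bound $\sum_n n^{2+2\gamma}e^{-2n^2\pi^2 t}$ by a Gaussian integral to arrive at the exponents $\alpha=\tfrac34+\tfrac\gamma2$ and the rate $N^{-\gamma}$. The only difference is organizational (a single $\beta$-parametrized sum estimate, plus an explicit check of strong continuity that the paper leaves implicit), not mathematical.
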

The proof of 
Lemma~\ref{BurgerSG}
can be found in 
Subsection~\ref{sec:burgerSG}.
The next well-known lemma describes the nonlinearities
for the stochastic Burgers equations
considered in this section.

\begin{lemma}
\label{BurgerNonl}
Let $ c \in \mathbb{R} $ 
be a real number.
Then the mapping 
$
  F \colon C( [0,1], \mathbb{R} )
  \rightarrow
  H^{-1}( (0,1), \mathbb{R} )
$
given by
$
  F(v) = c \cdot \partial 
  \big( v^2 \big)
$
for every 
$ 
  v \in C( [0,1], 
  \mathbb{R} ) 
$
satisfies Assumption~\ref{drift}.
\end{lemma}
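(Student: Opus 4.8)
The plan is to verify Assumption~\ref{drift} directly by estimating the $H^{-1}$-distance $\|F(v) - F(w)\|_{H^{-1}}$ against the $C([0,1],\mathbb{R})$-distance $\|v-w\|_V$ on each ball $\{ v : \|v\|_V \leq r \}$. First I would check that $F$ is well defined: for $v \in C([0,1], \mathbb{R})$ the square $v^2$ is again continuous, hence lies in $L^2((0,1), \mathbb{R})$, so that $\partial(v^2) \in H^{-1}((0,1), \mathbb{R})$ is meaningful by the definition of the distributional derivative recalled above.

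The core of the argument is the algebraic identity $F(v) - F(w) = c\,\partial\big( (v-w)(v+w) \big)$ together with a one-line duality estimate for $\partial$. By the definition of the $H^{-1}$-norm as the dual norm on $( H^1_0, \|\cdot\|_{H^1_0} )$ and the pairing $(\partial g)(\varphi) = -\langle g, \varphi'\rangle_{L^2}$, I would estimate, for every $\varphi \in H^1_0((0,1),\mathbb{R})$ with $\|\varphi\|_{H^1_0} = \|\varphi'\|_{L^2} \leq 1$, via the Cauchy--Schwarz inequality
\[
  |(\partial g)(\varphi)|
  = |\langle g, \varphi'\rangle_{L^2}|
  \leq \|g\|_{L^2}\,\|\varphi'\|_{L^2}
  \leq \|g\|_{L^2},
\]
whence $\|\partial g\|_{H^{-1}} \leq \|g\|_{L^2}$ for every $g \in L^2((0,1), \mathbb{R})$.

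Applying this with $g = (v-w)(v+w) = v^2 - w^2$ reduces the problem to an $L^2$-estimate on a product of continuous functions. Using the pointwise bound $|(v-w)(v+w)(x)| \leq \|v+w\|_V \, |v(x) - w(x)|$ and then $\|v-w\|_{L^2} \leq \|v-w\|_V$ (which follows from $\int_0^1 1\, dx = 1$), I would obtain
\[
  \|F(v) - F(w)\|_{H^{-1}}
  \leq |c|\,\|v+w\|_V \,\|v-w\|_{L^2}
  \leq 2\,|c|\,r\,\|v-w\|_V
\]
whenever $\|v\|_V \leq r$ and $\|w\|_V \leq r$. Hence the local Lipschitz constant on the ball of radius $r$ is bounded by $2\,|c|\,r < \infty$, which is precisely the content of Assumption~\ref{drift}.

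Since every step is an elementary norm inequality, there is no serious obstacle here; the only point requiring a little care is the passage through the duality characterization of the $H^{-1}$-norm, where one must use that $\|\varphi\|_{H^1_0}$ equals $\|\varphi'\|_{L^2}$ (rather than the full $H^1$-norm), so that the factor $\|\varphi'\|_{L^2}$ produced by the Cauchy--Schwarz step is controlled by $1$ and no Poincar\'e constant beyond this normalization enters.
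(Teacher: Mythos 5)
Your argument is correct and follows essentially the same route as the paper's proof: the paper likewise uses the estimate $\|\partial g\|_{H^{-1}} \leq \|g\|_{L^2}$ together with the factorization $v^2 - w^2 = (v+w)(v-w)$ and the bound $\|v^2-w^2\|_{L^2} \leq (\|v\|_V + \|w\|_V)\|v-w\|_V$ to get the local Lipschitz constant $2|c|r$. The only difference is that you spell out the duality proof of $\|\partial g\|_{H^{-1}} \leq \|g\|_{L^2}$, which the paper takes as known.
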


\begin{proof}
The estimate
$ \left\| 
  \partial v
\right\|_{ H^{-1} }
\leq \left\|
  v
\right\|_{L^2}
$
for every 
$ 
  v \in 
  L^2( 
    (0,1), \mathbb{R}
  )
$
implies
\begin{equation}
\begin{split}  
&
  \| F(v) - F(w)\|_{ H^{-1} }
 =
  \| 
    c \, \partial ( v^2 ) - c \, \partial ( w^2 )
  \|_{ H^{-1} }
  \leq  
  |c| \cdot 
  \| v^2  - w^2\|_{ L^2 }
\\ & \leq
  |c| \cdot 
  (
    \| v \|_{ 
      C( [0,1], \mathbb{R} ) 
    }
    +
    \| w \|_{ 
      C( [0,1], \mathbb{R} ) 
    }
  )
  \cdot\|  v - w \|_{ C([0,1],\mathbb{R}) }
\end{split}
\end{equation}
for every 
$ 
  v, w \in C( [0,1], \mathbb{R} ) 
$.
This yields
$
  \left\| 
    F(v) - F(w)
  \right\|_{ H^{-1} }
\leq
  2 \, r \, |c|
  \left\|  
    v - w
  \right\|_{ 
    C( [0,1], \mathbb{R} ) 
  }
$
for every 
$ 
  v, w \in 
  C( [0,1], \mathbb{R}) 
$
with 
$ 
  \| v \|_{ C([0,1],\mathbb{R}) }, 
  \| w \|_{ C([0,1],\mathbb{R})
  } \leq r 
$
and every $ r \in (0,\infty) $.
The proof of
Lemma~\ref{BurgerNonl}
is thus completed.
\end{proof}
For this type of nonlinearities,
Assumption~\ref{solution} is fulfilled,
which can be seen in the 
following lemma.
Its proof
is postponed to 
Subsection~\ref{sec:BurgerSol}
below.
A related result with 
can 
be found in Da Prato,
Debussche \&
Temam~\cite{GdP-AD-RT:94}
(see Lemma~3.1 and Theorem~3.1
in \cite{GdP-AD-RT:94}).

\begin{lemma}
\label{BurgerSol}
Let $ V = C( [0,1], \mathbb{R}) $
with
$ 
  \| v \|_V 
  = \sup_{ 0 \leq x \leq 1 }
  | v(x) | 
$
for all
$ v \in V $,
let
$ 
  W = H^{-1}( (0,1), \mathbb{R} ) 
$
with
$
  \| v \|_W = \| v \|_{ H^{ - 1 } }
$
for all 
$ v \in W $
and let 
$ 
  S \colon (0,T] \rightarrow 
  L( W, V ) 
$,
$ 
  F \colon V \rightarrow W 
$
and
$ 
  P_N \colon V \rightarrow V 
$, 
$
  N \in \mathbb{N}
$,
be given by 
Lemma~\ref{BurgerSG},
Lemma~\ref{BurgerNonl}
and \eqref{projoperator}.
Moreover, 
let
$ 
  O \colon [0,T] \times \Omega 
  \rightarrow V 
$
be an arbitrary 
stochastic process 
with continuous sample
paths and with
$
  \sup_{ N \in \mathbb{N} }
  \sup_{0 \leq t \leq T}
  \left\| 
  P_N( O_t(\omega) ) \right\|_V
  < \infty
$ 
for every
$ \omega \in \Omega $.
Then Assumption \ref{solution} 
is fulfilled.
\end{lemma}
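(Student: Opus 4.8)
The plan is to read the defining relation in~\eqref{GalSODEs} pathwise: for each fixed $\omega\in\Omega$ it is a deterministic integral equation in the finite dimensional subspace $P_N(V)\subset C([0,1],\mathbb{R})$. I would first establish existence and uniqueness of a continuous solution on all of $[0,T]$, and then derive the crucial uniform-in-$N$ bound $\sup_{M\in\mathbb{N}}\sup_{s\in[0,T]}\|X^M_s\|_V<\infty$. Throughout I abbreviate $K=K(\omega):=\sup_{N\in\mathbb{N}}\sup_{0\le t\le T}\|P_N(O_t(\omega))\|_V$, which is finite by hypothesis. Lemma~\ref{BurgerSG} furnishes $M\in(0,\infty)$ and $\alpha\in[0,1)$ with $\|S_\tau\|_{L(W,V)}\le M\,\tau^{-\alpha}$ for $\tau\in(0,T]$, and Lemma~\ref{BurgerNonl} (i.e.\ Assumption~\ref{drift}) supplies the local Lipschitz bound on $F$. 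Since the kernel $\tau^{-\alpha}$ is integrable, the fixed point map $Y\mapsto\int_0^{\cdot} P_N\,S_{\cdot-s}\,F(Y_s)\,ds+P_N(O_{\cdot})$ is a contraction on a small ball in $C([0,\tau],P_N(V))$, which yields a unique local solution; measurability in $\omega$ and continuity in $t$ are inherited from the Picard iterates, so each $X^N$ is a stochastic process with continuous sample paths.

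To pass from local to global solutions and, simultaneously, to prepare the uniform bound, I would perform an energy estimate. Set $v^N:=X^N-P_N(O)$, so that $v^N_0=0$ and $v^N_t\in P_N(V)$ is a finite sine series vanishing at $\{0,1\}$. In the finite dimensional space the mild solution is also a strong solution, hence $\tfrac{d}{dt}v^N_t=\Delta v^N_t+P_N\,F(X^N_t)$. Testing with $v^N_t$ in $L^2((0,1),\mathbb{R})$, the linear part gives the dissipation $-\|\partial v^N_t\|_{L^2}^2$, while the nonlinear part equals $-c\int_0^1\partial v^N_t\cdot(X^N_t)^2\,dx$. Writing $(X^N_t)^2=(v^N_t)^2+2\,v^N_t\,P_N(O_t)+(P_N(O_t))^2$, the decisive Burgers cancellation $\int_0^1\partial v^N_t\cdot(v^N_t)^2\,dx=\tfrac13[(v^N_t)^3]_0^1=0$ eliminates the cubic term, and the remaining terms, being linear and quadratic in the bounded quantity $P_N(O_t)$ (of norm $\le K$), are controlled by Young's inequality, part of them absorbed into the dissipation. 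Gronwall's lemma then gives $\sup_{0\le t\le T}\|v^N_t\|_{L^2}\le R_0$ with $R_0$ independent of $N$, and hence $\sup_{0\le t\le T}\|X^N_t\|_{L^2}\le R:=R_0+K$ uniformly in $N$. Since all norms on the fixed finite dimensional space $P_N(V)$ are equivalent, this a priori $L^2$ bound excludes blow up and extends the solution to all of $[0,T]$.

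The main obstacle is upgrading the uniform $L^2$ bound to a uniform bound in the supremum norm $\|\cdot\|_V$. For this I return to the mild form $v^N_t=c\int_0^t S_{t-s}\,P_N\,\partial((X^N_s)^2)\,ds$ (using that $P_N$ and $S_\tau$ commute, as both are diagonal in the sine basis). The semigroup smoothing gives $\|S_{t-s}\,P_N\,\partial g\|_V\le M\,(t-s)^{-\alpha}\,\|P_N\,\partial g\|_W\le M\,(t-s)^{-\alpha}\,\|g\|_{L^2}$ for $g\in L^2((0,1),\mathbb{R})$, where I use the estimate $\|\partial g\|_W\le\|g\|_{L^2}$ from the proof of Lemma~\ref{BurgerNonl} together with $\|P_N\|_{L(W)}\le1$ (the operator $P_N$ being the orthogonal projection diagonalised by the sine basis). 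Taking $g=(X^N_s)^2$ and using the interpolation $\|(X^N_s)^2\|_{L^2}=\|X^N_s\|_{L^4}^2\le\|X^N_s\|_V\,\|X^N_s\|_{L^2}\le R\,\|X^N_s\|_V$, and writing $u(t):=\|v^N_t\|_V$ together with $\|X^N_s\|_V\le u(s)+K$, I obtain the weakly singular linear integral inequality $u(t)\le |c|\,M\,R\int_0^t(t-s)^{-\alpha}\,(u(s)+K)\,ds$ with $\alpha<1$. A generalized (weakly singular) Gronwall lemma then bounds $u$ on $[0,T]$ by a constant depending only on $|c|,M,R,K,\alpha$ and $T$, all of which are independent of $N$; consequently $\sup_{0\le t\le T}\|X^N_t\|_V\le\sup_{0\le t\le T}u(t)+K$ is bounded uniformly in $N$. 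This verifies the uniform bound in~\eqref{GalSODEs} and completes the verification of Assumption~\ref{solution}. I expect the two delicate points to be the rigorous justification of the energy identity (mild equals strong in finite dimensions, and the boundary cancellation) and the application of the singular Gronwall lemma with a bound uniform in $N$.
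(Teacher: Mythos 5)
Your proposal is correct and its skeleton coincides with the paper's: a pathwise fixed point argument for local existence, an $L^2$ energy estimate whose heart is the Burgers cancellation $\int_0^1 \partial v\cdot v^2\,dx=0$ after splitting $X^N=v^N+P_N(O)$ (this is precisely the paper's Lemma~\ref{lemburger}, applied with $v=y_N(t)$ and $w=o_N(t)$, followed by Gronwall as in \eqref{r1proof}), and finally a bootstrap from the uniform $L^2$ bound to a uniform bound in $C([0,1],\mathbb{R})$ using the $H^{-1}\to C$ smoothing of the heat semigroup. The one place where you genuinely deviate is the bootstrap. The paper does it in two Gronwall-free steps, $L^2\to L^4\to L^\infty$: first $\|y_N(t)\|_{L^4}\lesssim \sup_s\|x_N(s)\|_{L^2}^2$ via the smoothing estimates $\|P_NS_t\|_{L(L^2,L^4)}\le t^{-1/8}$ and $\|S_t(v')\|_{L^2}\lesssim t^{-3/4}\|v\|_{L^1}$ of Lemma~\ref{lemmadditional}, then $\|y_N(t)\|_{C}\lesssim \sup_s\|x_N(s)\|_{L^4}^2$ via $\|F(x)\|_{H^{-1}}\le|c|\,\|x\|_{L^4}^2$; each step feeds the previous bound in quadratically but produces a closed-form constant. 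You instead go in one step, linearizing the quadratic nonlinearity through the interpolation $\|x\|_{L^4}^2\le\|x\|_{L^\infty}\|x\|_{L^2}\le R\,\|x\|_{V}$ and closing with a weakly singular Gronwall inequality (Henry's lemma, which the paper already invokes elsewhere); the commutation $P_NS_\tau=S_\tau P_N$ and $\|P_N\|_{L(H^{-1})}\le 1$ that you use to keep the constant $N$-independent is legitimate since both operators are diagonal in the sine basis (alternatively, $\sup_N\sup_t t^{3/4}\|P_NS_t\|_{L(W,V)}<\infty$ follows directly from Assumption~\ref{semigroup}). Your route is slightly shorter and avoids the $L^4$ intermediary at the price of an extra application of the singular Gronwall lemma; the paper's route yields the explicit exponential constant recorded in Lemma~\ref{lemproof}. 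Both correctly reduce the remaining work to the strong-versus-mild identification on the finite-dimensional subspace, which the paper carries out in \eqref{eq:limit0}--\eqref{eq:limit2} and which you rightly flag as the remaining delicate point.
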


We emphasize that 
Lemma~\ref{BurgerSol}
does not assume that
the driving noise process
$
  O \colon [0,T] \times \Omega
  \rightarrow V
$
is a stochastic convolution
involving a cylindrical Wiener
process as considered
in Proposition~\ref{constructO}.
In particular, Lemma~\ref{BurgerSol}
covers stochastic Burgers equations
driven by fractional Brownian motions.
In the next step the consequences of 
Lemmas~\ref{BurgerSG}--\ref{BurgerSol}
and Theorem~\ref{mainthm}
are illustrated by a
numerical example.

%
%
\subsubsection*{Numerical Example}
%
%
%
%
We consider
the stochastic evolution 
equation \eqref{eqspdesol} with
$ 
  S \colon (0,T] \rightarrow L(W,V) 
$,
$ 
  F \colon V \rightarrow W 
$
and 
$ 
  O \colon [0,T] \times \Omega 
  \rightarrow V 
$ 
given by 
Lemma~\ref{BurgerSG},
Lemma~\ref{BurgerNonl} 
and Proposition~\ref{constructO}
with the parameters
$ c = - 30 $, 
$ T = \frac{1}{20} $,
$ \xi(\omega) = \frac{6}{5} \sin(\pi x) $
for every $ \omega \in \Omega $
and
$ b(i) = \frac{ 1 }{ 3 } $
for every $ i \in \mathbb{N} $.
The stochastic evolution equation 
\eqref{eqspdesol} 
then reduces to
\begin{equation}
\label{stochburgereq}
  d X_t(x) =
  \Big[
    \Delta
    X_t 
    - 60 \cdot 
    X_t \cdot
    X_t' \,
  \Big] dt
  +
  \frac{ 1 }{ 3 } \, dW_t,
\qquad
  X_0(\cdot) = 
  \tfrac{6}{5} \sin( \pi \cdot )
\end{equation}
with
$ X_t(0) = X_t(1) = 0 $
for $ t \in [0,\frac{1}{20}]$
on
$
  C( [0,1], \mathbb{R} )
$
and
the finite dimensional 
SODEs~\eqref{GalSODEs} 
simplify to
\begin{equation}\label{stochburgereqSODE}
  d X^N_t =
  \left[
    \Delta
    X^N_t - 
    60 \cdot 
    P_N\big( 
      X^N_t \cdot
      ( X^N_t )' 
    \big)
  \right] dt
  +
  \tfrac{ 1 }{ 3 } \,
  P_N \, dW_t,
\qquad
  X^N_0( \cdot ) = 
  \tfrac{6}{5} \sin( \pi \cdot )
\end{equation}
with
$
  X^N_t(0) = X^N_t(1) = 0
$
for 
$ t \in [0,\frac{1}{20}] $
and $ N \in \mathbb{N} $
on
$
  C( [0,1], \mathbb{R} )
$.
Here 
$ W_t $, $ t \in [0,\frac{1}{20}] $, 
is a
cylindrical $I$-Wiener process
on 
$ 
  L^2( (0,1), \mathbb{R} ) 
$.
Combining 
Proposition~\ref{constructO}
and
Lemmas~\ref{BurgerSG}--\ref{BurgerSol}
with
Theorem~\ref{mainthm} 
then yields the
existence of an 	
unique solution
process
$ 
  X \colon [0,\frac{1}{20}] \times 
  \Omega \rightarrow
  C( [0,1], \mathbb{R}) 
$
with continuous sample paths
of the SPDE \eqref{stochburgereq}.
Moreover,
Proposition~\ref{constructO},
Lemmas~\ref{BurgerSG}--\ref{BurgerSol}
and Theorem~\ref{mainthm} 
imply the existence of 
$ 
  \mathcal{F} 
$/$
  \mathcal{B}( [0,\infty) )
$-measurable
mappings
$
  C_{ \gamma } \colon
  \Omega \rightarrow [0,\infty)
$,
$
  \gamma \in (0,\frac{1}{2}) 
$,
such that
\begin{equation}
\label{burgerass}
  \sup_{ 0 \leq t \leq \frac{1}{20} }
  \sup_{ 0 \leq x \leq 1 }
  \big| 
    X_t(\omega,x)
    -
    X^N_t(\omega,x)
  \big|
  \leq
  C_\gamma(\omega)
  \cdot
  N^{ - \gamma }
\end{equation}
for every $ N \in \mathbb{N} $,
$ \omega \in \Omega $
and every 
$ \gamma \in (0,\frac{1}{2}) $.
Hence, the solutions 
$ X^N_t(\omega,x) $ of 
the finite dimensional SODEs \eqref{stochburgereqSODE}
converge 
to the solution 
$ 
  X_t(\omega,x)
$ 
of the stochastic Burgers
equation \eqref{stochburgereq}
with the rate $ \frac{1}{2}- $
uniformly in 
$ t \in [0,\frac{1}{20}] $ 
and 
$ x \in [0,1] $
as $ N $ goes to infinity
for every $ \omega \in \Omega $.
In Figure~\ref{fig5}
the pathwise approximation
error
\begin{equation}
\label{eq:burgererror}
  \sup_{ 0 \leq t \leq \frac{1}{20} }
  \sup_{ 0 \leq x \leq 1 }
  \big| 
    X_t(\omega,x)
    -
    X^N_t(\omega,x)
  \big|
\end{equation}
is calculated approximatively
and plotted against
$ 
  N \in \{ 16, 32, 64, \dots, 1024, 2048 \} 
$
and two
random $ \omega \in \Omega $.
More precisely,
in the simulations 
presented in Figure~\ref{fig5},
the quantities~\eqref{eq:burgererror} 
are approximated through the
quantities
\begin{equation}
\label{eq:burgererror2}
  \sup_{ 
    t \in 
    \big\{ 
      \frac{ m }{ 4000 } \colon m \in\{ 0,1, \ldots,200 \} 
    \big\}
  }
  \sup_{ 
    x \in 
    \big\{
      \frac{ k }{ 16385 }
      \colon
      k \in \{0, 1, \ldots,16385 \} 
    \big\} 
  }\!\!\!
  \big| 
    Y^{ 16384, 200 }_t(\omega,x) - 
    Y^{ N, 200 }_t(\omega,x)
  \big|
\end{equation}
for
$ 
  N \in \{ 16, 32, 64, \dots, 1024, 2048 \} 
$
and two random
$ \omega \in \Omega $
where
$ 
  Y^{ N, 200 }_t 
  \colon \Omega 
  \rightarrow 
  P_N
    C( [0,1]^d, \mathbb{R} )
$
with
$  
  Y^{ N, 200 }_t
  \approx
  X^N_t
$
($ N $ Fourier nodes for the 
spatial discretization
and $ 200 $ time steps on the
interval $ [0, \frac{1}{ 20 } ] $
for the temporal discretization)
for
$ 
  N \in \{ 16, 32, 64, \dots, 1024, 2048 \} 
  \cup
  \{ 16384 \}
$
and
$
  t \in 
  \{ 
    0, \frac{ 1 }{ 4000 }, 
    \frac{ 2 }{ 4000 },
    \dots, \frac{ 199 }{ 4000 }, 
    \frac{ 1 }{ 20 } 
  \}
$
are suitable
accelerated
exponential Euler
approximations
(see Section~3 in \cite{jk09b})
for the 
SPDE~\eqref{stochburgereq}.
Figure~\ref{fig5} indicates
that the quantity~\eqref{eq:burgererror}
converges to zero with the
(from \eqref{burgerass})
theoretically
predicated order
$ \frac{1}{2} - $.
%
 \begin{figure}[htp] 
    \includegraphics[width=7.4cm]
    {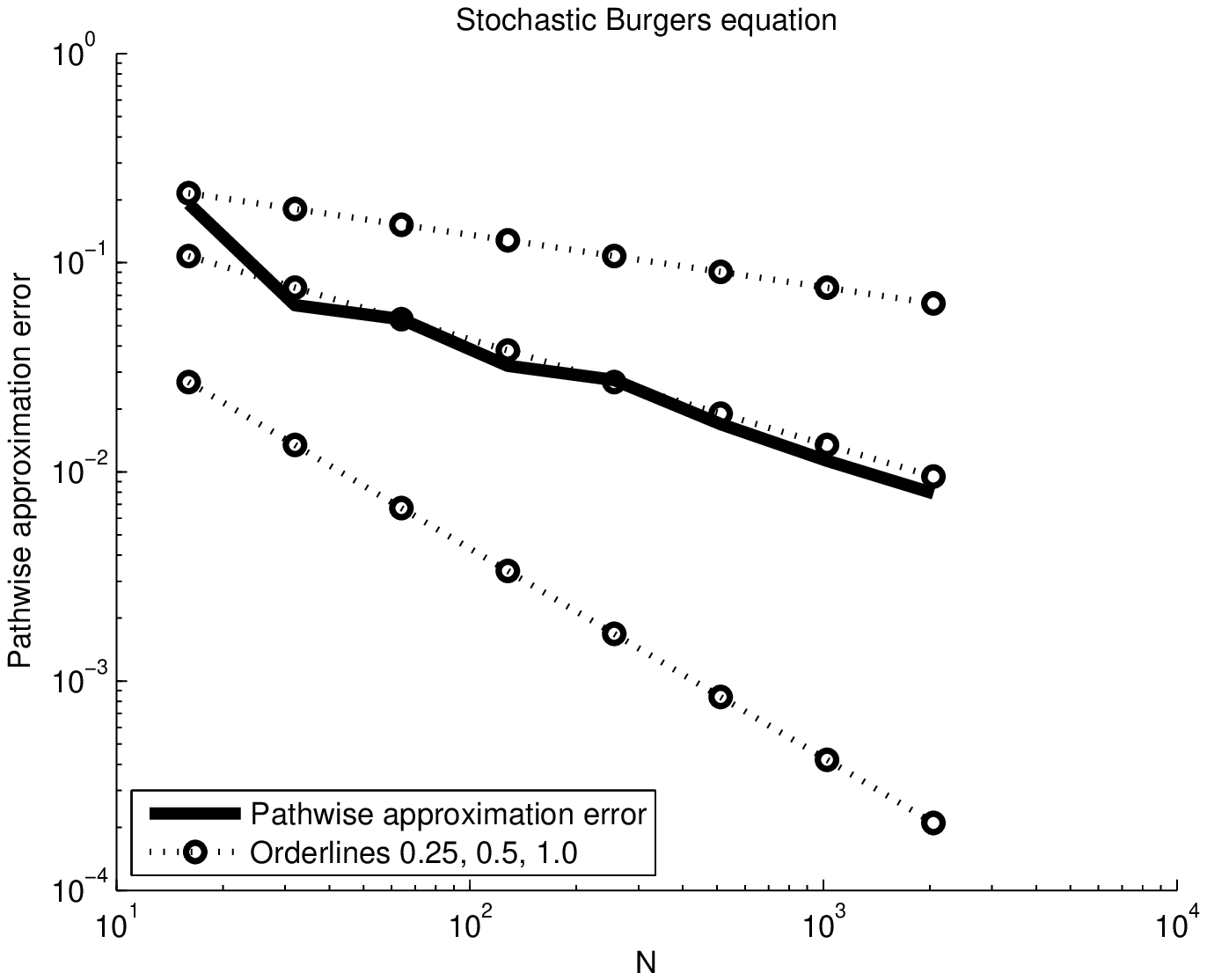}
\hspace*{-1.4cm}
    \includegraphics[width=7.4cm]
    {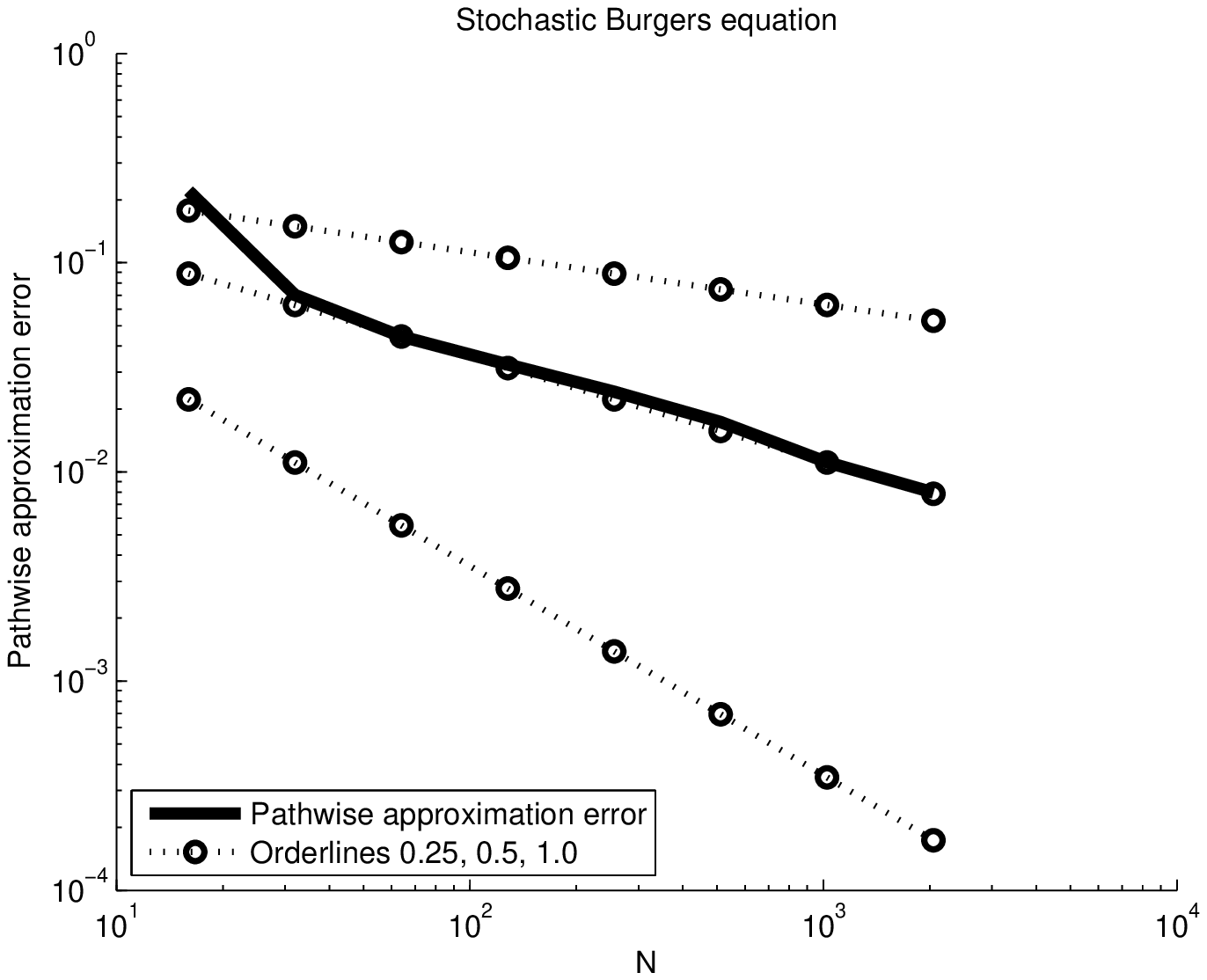}
   \caption{ \label{fig5}
   Pathwise approximation
   error \eqref{eq:burgererror2}
   against $N$ for 
   $ N \in \{ 16,32,64,\dots,1024,2048 \} $
   and two random $\omega \in \Omega $.
   }
 \end{figure}
%

%

\section{Proofs}

In this section we collect all technical proofs of the previous sections.


\subsection{Proof
of Theorem~\ref{mainthm}}
\label{sec:proofthm1}

\begin{proof}
Consider the 
$
  \mathcal{F}
  $/$\mathcal{B}([0,\infty))
$-measurable
mapping 
$ 
  R \colon 
  \Omega\rightarrow [0,\infty)
$ 
defined through
\begin{equation}
\label{eq:defR}
\begin{split}
 & R(\omega) 
  := 
  \sup_{ N \in \mathbb{N} }
  \sup_{ 0 \leq t \leq T }
  \left\| F( X^N_t(\omega) )
  \right\|_W
  +
  T
  +
  \sup_{ N \in \mathbb{N} }
  \sup_{ 0 \leq t \leq T }
  \left(
  N^{ \gamma }
    \left\| 
      O_t(\omega) - 
      P_N( O_t(\omega) 
      )
    \right\|_V
  \right)
\\ & 
  +
  \frac{ 1 }{ (1 - \alpha) }
  +
  \sup_{ N \in \mathbb{N} }
  \sup_{ 0 < t \leq T }
  \left(
  t^{ \alpha }
    \left\| 
      P_N S_t
    \right\|_{L(W,V)}
  \right)
  +
  \sup_{N \in \mathbb{N}}
  \sup_{0 < t \leq T}
  \left(
    t^{\alpha}
    N^{\gamma}
    \left\| S_t - P_N S_t 
    \right\|_{L(W,V)}
  \right)
\end{split}
\end{equation}
for every 
$ \omega \in \Omega $. 
Due
to Assumptions 
\ref{semigroup}-\ref{solution},
the mapping $ R $ 
is indeed finite.
Moreover, note that $ R $ is 
indeed
$\mathcal{F}$/$\mathcal{B}([0,\infty))$-measurable
although $V$ is not assumed
to be separable.
Next consider
the $ \mathcal{B}([0,\infty))
$/$ \mathcal{B}([0,\infty))$-measurable
mapping 
$ 
  L \colon [0,\infty)
  \rightarrow [0,\infty) 
$ 
given by
$
  L(r) :=
  \sup\!\left\{ 
  \frac{ \|F(v) - F(w)\|_W
  }{
    \| v - w \|_V   
  } 
  \colon
  \|v\|_V\leq r, \
  \| w \|_V \leq r, \
      v \neq w
  \right\}
$
for every $ r \in [0,\infty) $.
Additionally,
consider the $\mathcal{F}$/$
\mathcal{B}([0,\infty))$-measurable
mapping $ Z \colon \Omega
\rightarrow [0,\infty)$
given by
$
  Z(\omega)
  :=
  L\!\left(
    \sup_{N \in \mathbb{N}}
    \sup_{0 \leq t \leq T}
    \| X^N_t(\omega)\|_V
  \right)
$
for every $ \omega \in \Omega $.
In the next step
the definition of $ R $
implies
\begin{equation}
\begin{split}
 \|X^N_t  - X^M_t\|_V
& \leq
  \Big\|
    \int^t_0 
      P_N \,
      S_{ t - s } \left( F( X^N_s ) -F( X^M_s ) 
      \right) ds
  \Big\|_V
\\
 & +
  \Big\|
    \int^t_0 ( P_N - P_M )
      S_{ t - s } \,
      F( X^M_s ) \, 
    ds
  \Big\|_V
  +
  R
  \left(
    N^{ - \gamma } + M^{ - \gamma } 
  \right)
\end{split}
\end{equation}
for every $ N, M \in \mathbb{N}$
and every $ t \in [0,T] $
and the estimates
$
  \| 
    ( P_N - I ) \, S_{ t - s }
  \|_{ L(W,V) } 
  \leq
  R 
  N^{-\gamma}( t - s )^{ - \alpha }
$
and
$
\| P_N \,S_{ t - s }\|_{ L(W, V) }
  \leq
  R 
  ( t - s )^{ - \alpha }
$
for every $ N, M \in \mathbb{N} $,
$ s \in [0,t) $
and every $ t \in (0,T] $
therefore show
\begin{equation}
\begin{split}
\| X^N_t  -  X^M_t\|_V
 & \leq
  R
  \int^t_0 
  \tfrac{\|     F( X^N_s ) -  F( X^M_s ) \|_W 
  }{( t - s )^{ \alpha }
  }
  \,
  ds
\\&
  +
  R
  \left(
    N^{ - \gamma } + M^{ - \gamma } 
  \right)
  \int^t_0
    \tfrac{ 
      \left\| F( X^M_s ) \right\|_W 
    }{
      \left( t - s \right)^{ \alpha }
    }
  \, ds
  +
  R
  \left(
    N^{ - \gamma } + M^{ - \gamma } 
  \right)
\end{split}
\end{equation}
for every $ N, M \in \mathbb{N}$
and every $ t \in [0,T] $.
Hence, we have
\begin{equation}
\label{eq:est_mainthm}
  \|X^N_t  - X^M_t\|_V
 	\leq
  R Z
  \int^t_0 
    \| X^N_s - X^M_s 
    \|_V
  \,
  (  t - s )^{ -\alpha }
  \,
  ds
  +
  ( R + R^4 )
( N^{ - \gamma } + M^{ - \gamma } )
\end{equation}
for every $ N, M \in \mathbb{N}$
and every $ t \in [0,T] $
where we used the
estimate
$
  \frac{ 
    T^{ (1 - \alpha) } 
  }{ 
    ( 1 - \alpha ) 
  }
  \leq
  R 
  T^{ (1 - \alpha) }
  \leq
  R^{
    (2 - \alpha) 
  }
  \leq
  R^2
$
in the last inequality
of \eqref{eq:est_mainthm}.  
Lemma~7.1.11 in Henry~\cite{h81} 
hence yields
\begin{align}\label{cauchyS}
  \left\|
    X^N_t  - 
    X^M_t
  \right\|_V
 &\leq 
  \mathrm{E}_{(1 - \alpha)}
  \left(
    t \left( R\, Z\, \Gamma(1 - \alpha) 
    \right)^{\frac{1}{(1 - \alpha)}}
  \right)
  \left( R + R^4 \right)
  \left( N^{-\gamma} + M^{-\gamma} \right)
\nonumber
\\&\leq
  \mathrm{E}_{(1 - \alpha)}
  \left(
     T \left( R\, Z\, \Gamma(1 - \alpha) 
     \right)^{\frac{1}{(1 - \alpha)}}
  \right)
  \left( 2 R^4 \right)
  \left( N^{-\gamma} + M^{-\gamma} \right)
\end{align}
for every $ N, M \in \mathbb{N}$
and every $ t \in [0,T]$.
Here and below the functions 
$
  \mathrm{E}_r \colon [0,\infty)
  \rightarrow [0,\infty)
$,
$ r \in (0,\infty) 
$,
are defined through
$ 
  \mathrm{E}_r( x ) :=
  \sum_{ n = 0 }^{ \infty }
  \frac{
    x^{ n r }
  }{
    \Gamma( n r + 1 )
  }
$
for all 
$ x \in [0,\infty) $
and all
$ r \in (0,\infty) $
(see Lemma~7.1.11 in \cite{h81}
for details).
This shows that 
$ \left( X^N( \omega ) \right)_{ N \in \mathbb{N} } 
$ is a Cauchy-sequence in
$ 
  C( 
    [0,T], V
  ) 
$ 
for every $ \omega \in \Omega $.
Since $ C( [0,T], V ) $
is complete, we can define
the stochastic process
$ X \colon 
[0,T] \times \Omega \rightarrow V $
with continuous sample paths
by
$
  X_t(\omega)
  :=
  \lim_{N \rightarrow \infty}
  X^N_t(\omega)
$
for every $ t \in [0,T]$
and every $\omega \in \Omega$.
Hence, we obtain
\begin{align*}
&
  X_t(\omega)
  =
  \lim\limits_{N\rightarrow \infty}
  X_t^N(\omega)
  =
  \lim\limits_{N\rightarrow \infty}
  \left(
    \int^t_0 
      P_N \, 
      S_{ t - s } \,
      F( X^N_s(\omega) ) \,
    ds
    +
    P_N\!\left( O_t(\omega) \right)
  \right)
\\&=
  \lim\limits_{N\rightarrow \infty}
  \left(
    \int^t_0 
      P_N \, 
      S_{ t - s } \,
      F( X^N_s(\omega) ) \,
    ds
  \right)
    +
    O_t(\omega)
  =
    \int^t_0 
      S_{ t - s } \,
      F( X_s(\omega) ) \,
    ds
    +
    O_t(\omega)
\end{align*}
for every $ t \in [0,T]$
and every $ \omega \in \Omega $.
Moreover, 
if $ Y \colon [0,T] \times
\Omega \rightarrow V $ is a further
stochastic process
with continuous sample paths and with
$
  Y_t( \omega ) = 
    \int^t_0 
      S_{ t - s } \,
      F( Y_s(\omega) ) \,
    ds
    +
    O_t(\omega)
$
for every $ t \in [0,T] $ and
every $ \omega \in \Omega $,
then we obtain
\begin{equation}
\begin{split}
&
\| X_t - Y_t\|_V
  \leq
  R
  \int^t_0 
    ( t - s )^{ - \alpha } 
  \,
    \|  F( X_s ) - F( Y_s ) \|_W  
  \, ds
\\ & \leq 
  R \cdot
  L \Big( 
    \sup_{0 \leq r \leq T }
     \| X_r  \|_V
    +
    \sup_{0 \leq r \leq T }
     \| Y_r  \|_V
  \Big) \cdot
  \int^t_0 
    ( t - s  )^{ - \alpha } \,
    \| X_s -  Y_s \|_V
    \, 
  ds
\end{split}
\end{equation}
for every $ t \in [0,T] $.
Lemma~7.1.11 in \cite{h81}
therefore shows that 
$ X \colon [0,T] \times
\Omega \rightarrow V $ is the 
pathwise unique stochastic process
with continuous sample paths satisfying
equation~\eqref{eqspdesol}.
Moreover, 
\eqref{cauchyS}
yields
$
  \sup_{ 0 \leq t \leq T }
  \|X_t- X^N_t\|_V
	\leq C \cdot N^{-\gamma}
$
for every $N \in \mathbb{N}$,
where the $ \mathcal{F}$/$
\mathcal{B}[0,\infty))$-measurable
mapping 
$ 
  C \colon \Omega
  \rightarrow [0,\infty)
$
is given by
\begin{equation}
\label{eq:described_constant}
  C( \omega )
:=
  2 \cdot 
  \left( 
    R(\omega) 
  \right)^4 
  \cdot
  \mathrm{ E }_{ ( 1 - \alpha ) 
  }\left(
    T 
    \left( 
      R(\omega) 
      Z(\omega) 
      \Gamma( 1 - \alpha )
    \right)^{
      \frac{ 1 }{ 
        \left( 1 - \alpha 
        \right) 
      }
    }
  \right)
\end{equation}
for every $ \omega \in \Omega $.
The proof
of Theorem~\ref{mainthm}
is thus completed.
\end{proof}


\subsection{Proofs
for Subsection~\ref{stochheat}}

\subsubsection{Proof of
Lemma \ref{constructOO}}
\label{sec:constructOO}
Throughout this subsection we use the notation 
$
  \left\| x \right\|_2
  :=
  \left(
    x_1^2 + \ldots + x_d^2
  \right)^{ \frac{1}{2} }
$
for every 
$ x = \left( x_1,
\ldots, x_d \right) \in \mathbb{R}^d $.
We first present three 
elementary lemmas, 
which we need in the 
proof of Lemma \ref{constructOO}.
They are, for example, proved
as Lemmas~9, 11 
and 12 in \cite{j09b}.
%
%
%

\begin{lemma}\label{esti2}
It holds that
$
  \int_{(0,1)^d}
  \int_{(0,1)^d} 
    \frac{ 1 }{
      \left\| x - y \right\|_2^{ \alpha }
    }
  \, dx \, dy
  \leq
  \frac{ \left( 3 d \right)^d 
  }{ \left( d - \alpha \right) }
$
for every $ \alpha \in (0,d) $
and every $ d \in \mathbb{N} $.
\end{lemma}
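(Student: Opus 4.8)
The plan is to reduce the double integral to a single inner integral by Fubini's theorem, bound that inner integral uniformly in $y$, and then collapse the remaining integration over $y$ using that $(0,1)^d$ has Lebesgue measure one. Concretely, for fixed $y \in (0,1)^d$ the quantity to control is $\int_{(0,1)^d} \|x-y\|_2^{-\alpha}\,dx$, and since the integrand is nonnegative one may estimate it via the layer-cake (Cavalieri) formula $\int_{(0,1)^d}\|x-y\|_2^{-\alpha}\,dx = \int_0^\infty \lambda\big(\{x\in(0,1)^d : \|x-y\|_2^{-\alpha}>t\}\big)\,dt$, where $\lambda$ denotes Lebesgue measure on $\mathbb{R}^d$.

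The key geometric step I would use is the elementary observation that $\|x-y\|_2^{-\alpha}>t$ is equivalent to $\|x-y\|_2 < t^{-1/\alpha}$, so the super-level set is the intersection of the cube $(0,1)^d$ with the Euclidean ball of radius $r:=t^{-1/\alpha}$ about $y$. This ball is contained in a cube of side $2r$ centered at $y$, whence its intersection with $(0,1)^d$ has measure at most $\min(1,(2r)^d)=\min(1,2^d t^{-d/\alpha})$. Substituting this bound and splitting the $t$-integral at the threshold $t=2^\alpha$ where the two terms of the minimum cross, the convergence of the tail $\int_{2^\alpha}^\infty t^{-d/\alpha}\,dt$ is exactly where the hypothesis $\alpha<d$ enters, since it guarantees $d/\alpha>1$. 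Carrying out the two resulting one-dimensional integrals then yields the clean bound $\int_{(0,1)^d}\|x-y\|_2^{-\alpha}\,dx \le 2^\alpha + \frac{2^\alpha \alpha}{d-\alpha} = \frac{2^\alpha d}{d-\alpha}$, uniformly in $y$.

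Integrating this over $y\in(0,1)^d$ leaves the factor unchanged, so it remains only to absorb $2^\alpha d$ into $(3d)^d$. Here I would use the crude estimates $2^\alpha < 2^d$ (from $\alpha<d$) together with $2^d d \le 3^d d^d = (3d)^d$, the latter following from $(2/3)^d\, d^{\,1-d}\le 1$ for every $d\in\mathbb{N}$. This gives $\frac{2^\alpha d}{d-\alpha}\le\frac{(3d)^d}{d-\alpha}$, as required.

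I expect the main (though modest) obstacle to be bookkeeping rather than conceptual: one must choose the cube-of-side-$2r$ enclosure of the ball so that no dimension-dependent sphere-volume constant enters, and then verify that the resulting explicit constant $\frac{2^\alpha d}{d-\alpha}$ is genuinely dominated by $\frac{(3d)^d}{d-\alpha}$ uniformly in both $\alpha\in(0,d)$ and $d\in\mathbb{N}$. An alternative route via polar coordinates is available, but it introduces the surface area $2\pi^{d/2}/\Gamma(d/2)$ of the unit sphere, which would then have to be bounded back by an elementary expression; the $\min(1,(2r)^d)$ argument sidesteps this and makes the target constant $(3d)^d$ appear transparently.
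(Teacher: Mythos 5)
Your proof is correct. Note that the paper itself does not prove Lemma~\ref{esti2}; it only cites Lemma~9 of the thesis \cite{j09b}, so there is no in-paper argument to compare against line by line. Your layer-cake route is self-contained and the computations check out: the super-level set $\{x\in(0,1)^d:\|x-y\|_2<t^{-1/\alpha}\}$ has measure at most $\min(1,2^d t^{-d/\alpha})$, the crossover of the two branches is indeed at $t=2^{\alpha}$, and $\int_0^{2^{\alpha}}1\,dt+2^d\int_{2^{\alpha}}^{\infty}t^{-d/\alpha}\,dt = 2^{\alpha}+2^{\alpha}\alpha/(d-\alpha)=2^{\alpha}d/(d-\alpha)$, uniformly in $y$, using $d/\alpha>1$ exactly where the hypothesis $\alpha<d$ is needed. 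The final absorption $2^{\alpha}d\le 2^d d\le 3^d d^d=(3d)^d$ holds for every $d\in\mathbb{N}$ (the case $d=1$ reduces to $2\le 3$, and for $d\ge 2$ one has $(2/3)^d\le 1$ and $d^{1-d}\le 1$). Enclosing the ball in a cube of side $2r$ rather than passing to polar coordinates is a sensible choice here, since it avoids having to dominate the sphere-area constant $2\pi^{d/2}/\Gamma(d/2)$ by an elementary expression; either route yields a bound of the stated form, and yours in fact gives the sharper constant $2^{\alpha}d/(d-\alpha)$.
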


%
%

\begin{lemma}
\label{lipbasis}
Let 
$ 
  d \in \mathbb{N} 
$ 
and let
$ 
  e_i \in
  C( [0,1]^d, \mathbb{R} )
$, 
$ 
  i \in \mathbb{N}^d 
$,
be given by \eqref{deflambdai}.
Then
$
  \left|
    e_i(x)
    - e_i(y)
  \right|
  \leq
  2^{\frac{d}{2}}
  \pi
  \| i \|_2\| x - y \|_2
$
for every 
$ 
  x, y \in [0,1]^d 
$
and every 
$ 
  i
  \in \mathbb{N}^d 
$.
\end{lemma}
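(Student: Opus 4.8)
The plan is to exploit that each $e_i$ is a smooth function and to control its Lipschitz constant through a uniform bound on its gradient. Since $[0,1]^d$ is convex, for any $x,y \in [0,1]^d$ the whole segment $\{y + t(x-y) : t \in [0,1]\}$ lies in $[0,1]^d$, and the fundamental theorem of calculus along this segment gives
\begin{equation*}
  e_i(x) - e_i(y)
  =
  \int_0^1 \nabla e_i\big( y + t(x-y) \big) \cdot (x-y) \, dt ,
\end{equation*}
so that by the Cauchy--Schwarz inequality
$
  | e_i(x) - e_i(y) |
  \leq
  \big( \sup_{ z \in [0,1]^d } \| \nabla e_i(z) \|_2 \big)
  \cdot \| x - y \|_2
$.
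Thus the lemma reduces to showing the uniform gradient estimate $\sup_{ z } \| \nabla e_i(z) \|_2 \leq 2^{d/2} \pi \| i \|_2$.

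To establish this, I would differentiate the explicit product formula from \eqref{deflambdai}. For each coordinate $k \in \{1,\dots,d\}$ one computes
\begin{equation*}
  \frac{ \partial e_i }{ \partial x_k }(x)
  =
  2^{ \frac{d}{2} } \, i_k \pi \, \cos( i_k \pi x_k )
  \prod_{ j \neq k } \sin( i_j \pi x_j ) ,
\end{equation*}
and since every factor $\sin(i_j \pi x_j)$ and the factor $\cos(i_k \pi x_k)$ is bounded in absolute value by one, this yields the coordinatewise bound $| \partial_{x_k} e_i(x) | \leq 2^{d/2} \pi \, i_k$ for all $x \in [0,1]^d$. Squaring and summing over $k$ then gives
$
  \| \nabla e_i(x) \|_2^2
  \leq
  2^d \pi^2 \sum_{k=1}^d i_k^2
  =
  2^d \pi^2 \| i \|_2^2
$,
which is exactly the required gradient estimate after taking square roots.

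There is no real obstacle here; the argument is entirely elementary and the only point requiring a little care is the final Cauchy--Schwarz step, which is what converts the per-coordinate factor $i_k$ into the Euclidean norm $\|i\|_2$ of the multi-index rather than an $\ell^1$-type quantity $\sum_k i_k$. As an alternative that avoids differentiation altogether, one could instead use the telescoping identity
$
  \prod_{k} a_k - \prod_{k} b_k
  =
  \sum_{k} \big( \prod_{j<k} a_j \big)(a_k - b_k)\big( \prod_{j>k} b_j \big)
$
with $a_k = \sin(i_k \pi x_k)$ and $b_k = \sin(i_k \pi y_k)$; bounding the spared factors by one and using the one-dimensional Lipschitz estimate $| \sin(i_k \pi x_k) - \sin(i_k \pi y_k) | \leq i_k \pi \, | x_k - y_k |$ gives $| e_i(x) - e_i(y) | \leq 2^{d/2} \pi \sum_k i_k | x_k - y_k |$, and one concludes with the same Cauchy--Schwarz step. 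I would present the gradient version as the cleaner of the two.
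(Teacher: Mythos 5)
Your proof is correct; both the gradient/mean-value version and the telescoping alternative are valid, and the Cauchy--Schwarz step is indeed the only point where care is needed to obtain $\|i\|_2$ rather than $\sum_k i_k$. The paper itself omits the proof of this lemma (deferring to Lemma~11 in \cite{j09b}), and your argument is the standard elementary one that such a reference contains, so there is nothing to add.
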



%
%
\begin{lemma}
\label{oholder}
Let 
$ 
  \beta \colon [0,T]
  \times \Omega \rightarrow
  \mathbb{R} 
$ 
be a
standard Brownian motion.
Then
$
  \mathbb{E}\!\left[ 
    |
    \int^{t_2}_0
    e^{ - \lambda (t_2-s) } 
    \, d\beta_s
    -
    \int^{t_1}_0
    e^{ - \lambda (t_1-s) } 
    \, d\beta_s
    |^2
  \right]
  \leq
  \lambda^{ \left( r - 1 \right) } 
  \left| t_2 - t_1 \right|^r
$
for every 
$ t_1, t_2 \in [0,T] $,
$ r \in [0,1] $
and every $ \lambda \in (0,\infty) $.
\end{lemma}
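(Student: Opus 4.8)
The plan is to reduce the claim to a single application of It\^o's isometry followed by two elementary estimates. Without loss of generality I would assume $ t_1 \leq t_2 $ and write $ \delta := t_2 - t_1 \geq 0 $. The key observation is that the difference of the two stochastic integrals is itself a single It\^o integral over $ [0,t_2] $, namely $ \int_0^{t_2} g(s) \, d\beta_s $ with integrand $ g \colon [0,t_2] \to \mathbb{R} $ given by $ g(s) = e^{-\lambda(t_2-s)} - e^{-\lambda(t_1-s)} $ for $ s \in [0,t_1] $ and $ g(s) = e^{-\lambda(t_2-s)} $ for $ s \in (t_1,t_2] $. It\^o's isometry then yields
\[
  \mathbb{E}\!\left[ \Big| \int_0^{t_2} e^{-\lambda(t_2-s)} \, d\beta_s - \int_0^{t_1} e^{-\lambda(t_1-s)} \, d\beta_s \Big|^2 \right]
  =
  \int_0^{t_1} \big( e^{-\lambda(t_2-s)} - e^{-\lambda(t_1-s)} \big)^2 ds + \int_{t_1}^{t_2} e^{-2\lambda(t_2-s)} \, ds .
\]

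First I would evaluate both integrals explicitly. Factoring $ e^{-\lambda(t_2-s)} - e^{-\lambda(t_1-s)} = e^{-\lambda(t_1-s)} \, ( e^{-\lambda\delta} - 1 ) $ and substituting $ u = t_1 - s $, respectively $ u = t_2 - s $, gives
\[
  \int_0^{t_1} \big( e^{-\lambda(t_2-s)} - e^{-\lambda(t_1-s)} \big)^2 ds = \big( 1 - e^{-\lambda\delta} \big)^2 \, \frac{ 1 - e^{-2\lambda t_1} }{ 2\lambda } ,
  \qquad
  \int_{t_1}^{t_2} e^{-2\lambda(t_2-s)} \, ds = \frac{ 1 - e^{-2\lambda\delta} }{ 2\lambda } .
\]
Writing $ a := e^{-\lambda\delta} \in (0,1] $ and $ c := e^{-2\lambda t_1} \in (0,1] $ and using $ 1 - a^2 = (1-a)(1+a) $, the sum of these two terms equals $ \frac{1-a}{2\lambda} \big[ (1-a)(1-c) + (1+a) \big] = \frac{1-a}{2\lambda} \big[ 2 - c\,(1-a) \big] $. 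Since $ c\,(1-a) \geq 0 $, the bracket is bounded by $ 2 $, so the whole expression is at most $ \frac{1-a}{\lambda} = \frac{ 1 - e^{-\lambda\delta} }{ \lambda } $.

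Finally I would invoke the elementary inequality $ 1 - e^{-x} \leq x^r $, valid for all $ x \in [0,\infty) $ and all $ r \in [0,1] $: for $ x \geq 1 $ it follows from $ 1 - e^{-x} \leq 1 \leq x^r $, and for $ x \in [0,1] $ from $ 1 - e^{-x} \leq x \leq x^r $. Applying it with $ x = \lambda\delta $ gives $ \frac{ 1 - e^{-\lambda\delta} }{ \lambda } \leq \frac{ (\lambda\delta)^r }{ \lambda } = \lambda^{(r-1)} \, |t_2 - t_1|^r $, which is the desired estimate. The computations are routine; the only delicate point is the bookkeeping in the combination step, where the algebraic cancellation producing the bracket $ 2 - c\,(1-a) \leq 2 $ is exactly what yields the clean constant $ 1 $ in the statement rather than a larger factor.
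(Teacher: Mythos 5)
Your proof is correct: the reduction to a single It\^o integral with a piecewise integrand, the explicit evaluation of the two resulting Lebesgue integrals, the algebraic combination giving the bound $\frac{1-e^{-\lambda\delta}}{\lambda}$, and the elementary inequality $1-e^{-x}\leq x^r$ for $x\geq 0$, $r\in[0,1]$ all check out, including the constant $1$. The paper itself omits the proof and refers to Lemma~12 in \cite{j09b}; your argument is the standard one (It\^o isometry plus elementary estimates) that the cited reference uses, so there is nothing further to compare.
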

%
%

%
After these three very simple lemmas,
we present now two lemmas (Lemma~\ref{oholder2} 
and Lemma~\ref{oGalerkin}),
which are the essential constituents
in the proof of 
Lemma~\ref{constructOO}.
The first one will ensure 
the temporal
regularity of the 
processes that are
constructed
in Lemma~\ref{constructOO}.

\begin{lemma}
\label{oholder2}
Let $ d \in \mathbb{N}$,
let 
$ 
  \beta^i \colon [0,T] 
  \times \Omega \rightarrow
  \mathbb{R} 
$, 
$ 
  i \in \mathbb{N}^d
$,
be a family 
of independent 
standard Brownian motions
and 
let 
$ 
  b \colon \mathbb{N}^d
  \rightarrow \mathbb{R} 
$
be an arbitrary function.
Then
\begin{equation}
\label{eq:toshow1}
  \Big(
  \mathbb{E}\Big[
    \sup_{ x \in [0,1]^d } 
    \left| O^N_{t_2}(x) 
    - O^N_{t_1}(x) \right|^p
  \Big]
  \Big)^{ \! \frac{1}{p} }
  \leq 
  C_\star
  \left[
    \sum_{ i \in \{1,\dots,N\}^d}
      | b(i) |^2 \,
      \| i \|_2^{
        ( 4 \theta + 4 \alpha - 2 )  
      }
  \right]^{
    \! \frac{1}{2}
  }
  \left| t_2 - t_1 \right|^{ \theta }
\end{equation}
for every $ t_1, t_2 \in [0,T]$,
$ N \in \mathbb{N}$,
$ p \in [1,\infty)$
and every 
$ \alpha $,
$ \theta \in (0,\frac{1}{2}]$,
where 
$ 
  C_\star \in [0,\infty)
$
is a constant 
which depends 
on $ d, p, \alpha $ and $ \theta $ 
only 
and
where 
the stochastic process
$ 
  O^N \colon [0,T]
  \times \Omega
  \rightarrow C( [0,1]^d, \mathbb{R} 
  ) 
$ 
is defined through
\begin{equation}
\label{eq:defON}
  O^N_t(\omega)
  :=
      \sum_{ i \in 
      \{ 1, \dots, N \}^d }
      b(i) 
      \Big( 
      - \lambda_i
      \int^t_0
      e^{ 
      - \lambda_i 
      (t-s) }
      \beta^i_s(\omega) \, ds
      + \beta^i_t(\omega) 
      \Big) \cdot
      e_i
\end{equation}
for every $ t \in [0,T]$,
$ \omega \in \Omega $
and every $ N \in \mathbb{N}$.
Here
$ 
  e_i \in 
  C( [0,1]^d,\mathbb{R} ) 
$, 
$ i \in \mathbb{N}^d$,
and $ \lambda_i \in \mathbb{R} $,
$ i \in \mathbb{N}^d $,
are given in \eqref{deflambdai}.
\end{lemma}

\begin{proof}
Throughout this proof
let $ \alpha,
\theta \in (0,\frac{1}{2}]$,
$ p, N \in \mathbb{N}$
with $ p > \frac{1}{\alpha} $
and $ t_1, t_2 \in [0,T] $
with $ t_1 \leq t_2 $
be fixed.
In addition, let 
$ 
  C = 
  C_{ d,p,\alpha,\theta } \in [0,\infty) 
$
be a constant which changes
from line to line but depends 
on $ d $, $ p $, $ \alpha $
and $ \theta $ only.
We show now 
inequality~\eqref{eq:toshow1}
for these parameters 
and 
the case with a general
$ p \in [1,\infty)$
then follows from Jensen's inequality.
The definition of $ O^N $ implies
\begin{equation}
\begin{split}
&
      ( O^N_{t_2}(x) - O^N_{t_1}(x))
      - (O^N_{t_2}(y) - O^N_{t_1}(y))
\\ & =
        \sum_{ i \in \{ 1, \dots, N \}^d }
\!\!\!
        b(i) 
        \Big( 
          \int^{t_2}_{0}
          e^{-\lambda_i
          (t_2-s) } \,
          d\beta^i_s
          -
          \int^{t_1}_0
          e^{-
            \lambda_i
            (t_1-s) 
          } \,
          d\beta^i_s
        \Big) 
\cdot
        ( e_i(x) - e_i(y) )
\end{split}
\end{equation}
$ \mathbb{P} $-a.s.\ for 
every $ x,y \in [0,1]^d$.
Hence, Lemma \ref{lipbasis}
and Lemma \ref{oholder} yield
\begin{align}
&
    \mathbb{E}\Big[
    \left|
      (O^N_{t_2}(x) - O^N_{t_1}(x))
      - 
      (O^N_{t_2}(y) - O^N_{t_1}(y))
    \right|^2
  \Big]
\nonumber
\\ & =
        \sum_{ i \in \{ 1, \dots, N \}^d }
        | b(i) |^2 \,
        \mathbb{E}\!\left[
           \Big| \int^{t_2}_{0}
          e^{-\lambda_i(t_2-s) } \, d\beta^i_s
          - \int^{t_1}_0 
          e^{-\lambda_i(t_1-s)} \, d\beta^i_s
        \Big|^2 \right]
       |e_i(x) - e_i(y)|^2
\nonumber
\\ & \leq
  \sum_{ 
    i \in \{ 1, \dots, N \}^d 
  }
    | b(i) |^2 \,
    | \lambda_i |^{ (2 \theta-1) } \,
    | t_2 - t_1 |^{2 \theta}
    \left(
      2^d \pi^2
      \| i \|_2^2
      \| x - y \|_2^2
    \right)^{
      2 \alpha
    }
    \left(  
      \left| e_i(x) \right|
      + \left| e_i(y) \right|
    \right)^{
      2 \left(
        1 - 2 \alpha
      \right)
    }
\nonumber
\\ & \leq
  C
  \,
  | t_2 - t_1 |^{ 2 \theta }
  \,
  \| x - y \|_2^{4 \alpha}
  \sum\nolimits_{ i \in \{ 1, \dots, N \}^d }
  | b(i)|^2 \,
  \| i \|_2^{ (4 \theta + 4 \alpha - 2) }
\label{oholderuse1}
\end{align}
for every $ x,y \in [0,1]^d$.
Moreover, 
Lemma~\ref{oholder} gives

\begin{equation}
\label{oholderuse2}
\begin{split}
&
  \mathbb{E}\!\left[
    | O^N_{t_2}(x) - O^N_{t_1}(x)|^2
  \right]
\\ & =
  \sum_{ i \in \{ 1, \dots, N \}^d }
  | b(i)|^2 \,
  \mathbb{E}\!\left[
    \Big| 
      \int^{t_2}_{0}
      e^{-\lambda_i(t_2-s) } \,
      d\beta^i_s
      - 
      \int^{t_1}_0
      e^{- \lambda_i(t_1-s)  } \, 
      d\beta^i_s
    \Big|^2
  \right]
  |e_i(x)|^2
\\
&\leq
  C\!\!
    \sum_{ i \in \{ 1, \dots, N \}^d }
    | b(i) |^2 \,
    \| i \|_2^{ ( 4 \theta - 2 ) } \,
    | t_2 - t_1 |^{ 2 \theta }
\end{split}
\end{equation}
for every $ x \in [0,1]^d$.
In the next step the
Sobolev embeddings
in Subsections~2.2.4 
and 2.4.4
in \cite{rs96} 
yield
\begin{align*}
  &
  \mathbb{E}\big[
  \|O^N_{t_2} - O^N_{t_1}\|_{ C([0,1]^d,\mathbb{R}) }^p 
  \big]
\\
&\leq
  C 
  \int_{ (0,1)^d }
  \int_{ (0,1)^d }
  \frac{
    \mathbb{E}\big[
   |(O^N_{t_2}(x) - O^N_{t_1}(x))
- (O^N_{t_2}(y) - O^N_{t_1}(y))|^p
    \big]
  }{
    \| x - y \|_2^{ d 
        + p \alpha }
  }
  \, dx \, dy
\\ & \quad
+
  C
  \int_{ (0,1)^d }
    \mathbb{E}\big[
    | O^N_{t_2}(x) - O^N_{t_1}(x)|^p
    \big] \,
  dx
\\
&\leq
  C
  \int_{ (0,1)^d }
  \int_{ (0,1)^d }
  \frac{
    \left(
    \mathbb{E}\big[
    |(O^N_{t_2}(x) - O^N_{t_1}(x))
      - (O^N_{t_2}(y) - O^N_{t_1}(y))|^2
    \big]
    \right)^{ \! \frac{p}{2} }
  }{
    \| x - y\|_2^{ d + p \alpha
    }
  }
  dx \, dy
\\ & \quad
+
  C
  \int_{ (0,1)^d }
    \left(
    \mathbb{E}\big[
    |O^N_{t_2}(x) - O^N_{t_1}(x)|^2
    \big]
    \right)^{ \! \frac{p}{2} }
  dx
\end{align*}
and 
\eqref{oholderuse1}
and \eqref{oholderuse2}
therefore show
\begin{align*}
&
  \mathbb{E}
    \| O^N_{t_2} - O^N_{t_1}\|_{ C([0,1]^d,\mathbb{R}) }^p 
\\ & \leq
  C
  \int_{ (0,1)^d }
  \int_{ (0,1)^d }
  \frac{ 
    | t_2 - t_1 |^{ p \theta } \,
    \| x - y \|_2^{ 2 p \alpha }
  }{
    \| x - y \|_2^{  d + p \alpha  }
  }
  \, dx \, dy
  \Big(
    \sum\nolimits_{ i \in \{1,\dots,N\}^d}
    | b(i)|^2 \,
    \| i \|_2^{ 
      ( 4 \theta + 4 \alpha - 2 )
    }
  \Big)^{ \! \frac{p}{2} }
\\ & \quad
  + C
  \Big(
    \sum\nolimits_{ 
      i \in \{ 1, \dots, N \}^d 
    }
    | b(i) |^2 \,
    \| i \|_2^{ ( 4 \theta - 2 ) } \,
    | t_2 - t_1 |^{ 2 \theta }
  \Big)^{ \! \frac{p}{2} }
\\ & \leq
  C
  \Big(
    1+
    \int_{ (0,1)^d }
    \int_{ (0,1)^d }
      \| x - y \|_2^{ p \alpha - d }
    \, dx \, dy 
  \Big)
  | t_2 - t_1 |^{ p \theta }
  \Big(
    \sum_{ i \in \{1,\dots,N\}^d
    }
    | b(i) |^2
    \| i \|_2^{
        4 \theta
        + 4 \alpha - 2 
    }
  \Big)^{ \! \frac{p}{2} } .
\end{align*}
Lemma~\ref{esti2} 
hence gives 
\begin{equation}
  \left( 
    \mathbb{E}\!\left[
      \left\|
        O^N_{t_2}
        -
        O^N_{t_1}
      \right\|_{
        C( [0,1]^d, \mathbb{R} )
      }^p
    \right]
  \right)^{ \! \frac{1}{p} }
\leq
  C
  \Big(
    \sum_{ i \in 
      \left\{1,\dots,N\right\}^d
    }
    | b(i) |^2 \,
    \| i \|_2^{
      (
        4 \theta
        + 4 \alpha - 2 
      ) 
    }
  \Big)^{ \! \frac{1}{2} }
  \left| 
    t_2 - t_1 
  \right|^{
    \theta 
  } 
\end{equation}
and this completes the
proof of Lemma~\ref{oholder2}.
\end{proof}
%

%
%
%
%
%
%
%
%
%

\begin{lemma}
\label{oGalerkin}
Let $ d \in \mathbb{N}$,
let 
$ 
  \beta^i \colon [0,T] 
  \times \Omega \rightarrow
  \mathbb{R} 
$, 
$ 
  i \in \mathbb{N}^d
$,
be a family 
of independent 
standard Brownian motions
and 
let 
$ 
  b \colon \mathbb{N}^d
  \rightarrow \mathbb{R} 
$
be an arbitrary function.
Then
\begin{equation}
\label{eq:toshow2}
  \Big(
  \mathbb{E}\Big[
    \sup_{0\leq t \leq T}
    \sup_{ x \in [0,1]^d }
    \left| O^N_{t}(x) 
    - O^M_{t}(x) \right|^p
  \Big]
  \Big)^{ \! \frac{2}{p} }
  \leq 
  C_\star\!\!\!\!\!\!\!\!
  \sum_{ 
      i \in \left\{1,\dots,N
      \right\}^d 
      \backslash
      \{1,\dots,M\}^d 
  }\!\!\!\!\!\!\!\!
      | b(i) |^2 \,
      \| i \|_2^{
        \left( 8 \alpha - 2 
      \right)}
\end{equation}
for every 
$ 
  N, M \in \mathbb{N}
$
with 
$ 
  N \geq M 
$,
every
$ p \in [1,\infty) $
and every
$
  \alpha \in \left(0,\frac{1}{2}
  \right)
$,
where 
$ 
  C_\star \in [0,\infty) 
$
is a constant which depends 
on $ d $, $ p $, $ \alpha $ and $T$
only
and where 
$ 
  O^N \colon [0,T]
  \times \Omega
  \rightarrow 
  C( 
    [0,1]^d, \mathbb{R} 
  ) 
$,
$ N \in \mathbb{N} $,
are stochastic
processes defined
through \eqref{eq:defON}.
\end{lemma}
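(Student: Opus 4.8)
The plan is to reduce the claim to a purely temporal regularity statement for a process of the type \eqref{eq:defON} and then to run a Kolmogorov continuity argument. First I would observe that, for $N\ge M$, the difference $O^N_t-O^M_t$ is again a process of the form \eqref{eq:defON}, namely the one obtained by replacing the coefficient function $b$ by its restriction to the index shell $\{1,\dots,N\}^d\setminus\{1,\dots,M\}^d$ (equivalently, by setting $b(i)=0$ for $i\notin\{1,\dots,N\}^d\setminus\{1,\dots,M\}^d$). Since each summand in \eqref{eq:defON} depends continuously on $t$ and vanishes at $t=0$ (as $\beta^i_0=0$ $\mathbb{P}$-a.s.), the process $t\mapsto O^N_t-O^M_t$ has continuous sample paths in $V=C([0,1]^d,\mathbb{R})$ and satisfies $O^N_0-O^M_0=0$. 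Hence it suffices to control $\mathbb{E}[\sup_{0\le t\le T}\|O^N_t-O^M_t\|_V^p]$ in terms of the shell sum, for which I abbreviate $A:=\sum_{i\in\{1,\dots,N\}^d\setminus\{1,\dots,M\}^d}|b(i)|^2\,\|i\|_2^{(8\alpha-2)}$.

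The next step is to feed this restricted process into Lemma~\ref{oholder2}. Choosing in that lemma both free exponents equal to the given $\alpha$, that is, taking $\theta=\alpha\in(0,\tfrac12)$, the exponent $4\theta+4\alpha-2$ appearing there becomes exactly $8\alpha-2$, so Lemma~\ref{oholder2} yields
\begin{equation*}
  \big(\mathbb{E}\big[\|(O^N_{t_2}-O^M_{t_2})-(O^N_{t_1}-O^M_{t_1})\|_V^p\big]\big)^{1/p}
  \le C_\star\,A^{1/2}\,|t_2-t_1|^{\alpha}
\end{equation*}
for all $t_1,t_2\in[0,T]$ and all $p\in[1,\infty)$. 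This is the temporal increment bound that drives the argument, and its time exponent $\alpha$ is the quantity that decides the applicability of the continuity theorem below.

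Finally I would convert this increment bound into the desired supremum bound by a Kolmogorov--Chentsov continuity estimate. Fixing first a $p\in[1,\infty)$ with $p\alpha>1$, the increment bound $\mathbb{E}[\|(O^N_{t_2}-O^M_{t_2})-(O^N_{t_1}-O^M_{t_1})\|_V^p]\le C_\star^{\,p}A^{p/2}|t_2-t_1|^{p\alpha}$ is precisely of Kolmogorov type with Hölder surplus $p\alpha-1>0$; the quantitative, Banach-space-valued version of the continuity theorem then bounds $\mathbb{E}[\sup_{0\le s<t\le T}\|(O^N_t-O^M_t)-(O^N_s-O^M_s)\|_V^p]$ by a constant depending only on $d$, $p$, $\alpha$ and $T$ times $C_\star^{\,p}A^{p/2}$. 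Using $O^N_0-O^M_0=0$ to dominate $\sup_{0\le t\le T}\|O^N_t-O^M_t\|_V$ by that double supremum, and taking the $(2/p)$-th power, gives the assertion for every such $p$. For the remaining range $p\alpha\le1$ I would pick $q>\tfrac1\alpha$ with $q\ge p$ and invoke Jensen's inequality (monotonicity of the $L^p(\Omega)$-norms in $p$), exactly as in the reduction used in Lemma~\ref{oholder2}. The \emph{main obstacle} is this last passage: one must apply the continuity theorem in a genuinely quantitative form that tracks the explicit dependence of the constant on the shell mass $A$, and the constraint $p\alpha>1$ forces the two-regime treatment in $p$ rather than a single uniform argument.
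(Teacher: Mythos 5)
Your argument is correct, but it follows a genuinely different route from the paper. The paper proves Lemma~\ref{oGalerkin} by the factorization method: it introduces the auxiliary process $Y^{N,M}_t=\sum_{i}b(i)\int_0^t(t-s)^{-\alpha}e^{-\lambda_i(t-s)}\,d\beta^i_s\cdot e_i$ over the index shell, uses the identity $O^N_t-O^M_t=\frac{\sin(\pi\alpha)}{\pi}\int_0^t(t-s)^{\alpha-1}S_{t-s}Y^{N,M}_s\,ds$ together with $\sup_t\|S_t\|\leq 1$ and H\"older's inequality to reduce the pathwise supremum in $t$ to $\int_0^T\mathbb{E}\|Y^{N,M}_s\|^p\,ds$, and then bounds the latter by re-running the Sobolev-embedding computation with Lemmas~\ref{esti2} and \ref{lipbasis}. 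You instead observe that $O^N-O^M$ is itself a process of type \eqref{eq:defON} with $b$ restricted to the shell, recycle Lemma~\ref{oholder2} with $\theta=\alpha$ (so that $4\theta+4\alpha-2=8\alpha-2$) to get the temporal increment bound with constant proportional to the square root of the shell sum, and then pass to the supremum in $t$ via a quantitative Banach-space-valued Kolmogorov--Chentsov estimate for $p\alpha>1$, finishing with Jensen for small $p$. Both routes are sound; yours is more economical in that it avoids the factorization machinery and the second Sobolev-embedding computation, at the price of invoking the continuity theorem in a quantitative form that tracks the constant's dependence on the shell mass (standard, but it must be cited in the Banach-valued moment form, e.g.\ the moment bound on the H\"older seminorm), whereas the paper's factorization argument delivers the supremum in time directly from the smoothing integral without any chaining step. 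One small point worth making explicit in your write-up: the Kolmogorov bound is a priori a statement about a continuous modification, so you should note (as you implicitly do) that $O^N-O^M$ is a finite sum of continuous scalar processes times fixed elements of $C([0,1]^d,\mathbb{R})$ and hence already has continuous sample paths, so the modification is indistinguishable from the process itself.
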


\begin{proof}
Throughout this proof
let $ \alpha \in (0,\frac{1}{2})$
and $ p, N, M \in \mathbb{N}$
with $ p > \frac{1}{\alpha} $ and
$N \geq M $
be fixed.
In addition, let 
$ 
  C = C_{ d, p, \alpha, T } 
  \in [0,\infty)
$
be a constant, which changes
from line to line but which depends on
$ d $, $ p $, $ \alpha $ and 
$ T $
only.
As in the proof of Lemma \ref{oholder2},
we show now 
inequality~\eqref{eq:toshow2}
for these parameters and 
the case with a general
$ p \in [1,\infty)$
then follows from 
Jensen's inequality.
We use the factorization method
(see 
\cite{DaPratoKwapienZabczyk1988} 
and, e.g., Section 5.3
in \cite{dz92} and Section~5 
in \cite{bms01})
to show \eqref{eq:toshow2}.
For this let
$
  Y^{N,M} \colon 
  [0,T] \times \Omega \rightarrow   
  C( [0,1]^d,\mathbb{R} ) 
$
be a stochastic processes
with continuous sample
paths given by
\begin{equation}
  Y^{N,M}_t
  =
  \sum_{ 
      i \in \{1,\dots,N\}^d
      \backslash 
      \{1,\dots,M\}^d
  }
      b(i) 
        \int^t_0
        \left( t - s 
        \right)^{-\alpha}
        e^{ 
        - \lambda_i (t-s) 
        } 
        \,
        d\beta^i_s
\cdot
      e_i
\end{equation}
$ \mathbb{P} $-a.s.\ for 
every $ t \in [0,T]$.
By using 
Kolmogorov's theorem
(see, e.g., Theorem~3.3 
in \cite{dz92}),
one can check in a straightforward
way that the
stochastic processes 
$
  \int^t_0
  ( t - s )^{-\alpha}
  \,
  e^{ 
    \lambda_i (t-s) 
  }
  \,
  d\beta^i_s
$,
$
  t \in [0,T]
$,
$
  i \in \mathbb{N}^d
$,
indeed have modifications 
with continuous sample paths.
The key idea of the factorization
method is then to make use of
the identity
\begin{equation}
\label{eq:factorization}
  O^{N}_t - O^M_t
  =
  \frac{ \sin(\pi \alpha) }{\pi }
  \int^t_0 
    ( t - s)^{ ( \alpha - 1 ) } \,
    S_{ t - s } \, Y^{N,M}_s \, 
  ds
\end{equation}
$ \mathbb{P} $-a.s.\ for 
every $ t \in [0,T] $
(see, e.g., equation~(5.18)
in Section~5.3
in \cite{dz92}).
More precisely, 
combining \eqref{eq:factorization}, 
the well known fact
$ 
  \sup_{ 0 \leq t \leq T }
  \| S_t \|_{ 
    L( 
      C( [0,1]^d, \mathbb{R} ) 
    )
  } \leq 1 
$
(see, e.g.,
Lemma~6 in \cite{j09b})
and H\"{o}lder's inequality
gives 
\begin{equation}
\label{useoGal}
\begin{split}
&
  \mathbb{E}
    \sup_{0 \leq t \leq T}
    \left\| 
      O^{N}_t - O^M_t
    \right\|_{
      C([0,1]^d,\mathbb{R})
    }^p
\\ & =
  \mathbb{E}
  \sup_{0 \leq t \leq T}
  \Big\| 
    \frac{ \sin(\pi \alpha) }{\pi }
    \int^t_0 
    ( t - s)^{ ( \alpha - 1 ) } \,
    S_{ t - s } \, Y^{N,M}_s \, ds
  \Big\|_{
    C([0,1]^d,\mathbb{R})
  }^p
\\ & \leq
  \mathbb{E}
  \sup_{0 \leq t \leq T}
  \Big|
    \int^t_0 
    ( t - s )^{( \alpha - 1 )} \,
    \| Y^{N,M}_s \|_{ 
      C([0,1]^d,\mathbb{R})
    } \,
  ds
  \Big|^p
\\ & \leq
  C
  \int^T_0 
  \mathbb{E}\!\left[
    \| Y^{N,M}_s \|_{
      C([0,1]^d,\mathbb{R})
    }^p 
  \right]
  ds\;. 
\end{split}
\end{equation}
Hence, it remains to bound 
$\| Y^{N,M}_s \|_{C([0,1]^d,\mathbb{R})}$
in (\ref{useoGal}).
For this, 
denote 
$
  \mathcal{I}_{N}
  :=
  \{ 1, 2, \dots, N\}^d
$ 
and
$ 
  \mathcal{I}_{M} :=
  \{1, 2, \dots, M\}^d
$.
Lemma~\ref{lipbasis}
then implies
\begin{equation}
\label{oGaluse1}
\begin{split}
&
  \mathbb{E}\big[
    |  Y^{N,M}_t(x)-Y^{N,M}_t(y)|^2
  \big]
\\ & =
  \mathbb{E}\Big[
    \Big|
    \sum_{   
      i \in \mathcal{I}_N
      \backslash \mathcal{I}_M 
    }
      b(i) 
      \int^t_0
      ( t - s )^{-\alpha} \,
      e^{ - \lambda_i (t-s) } 
      \, d\beta^i_s
\cdot
      ( e_i(x) - e_i(y) )
    \Big|^2
  \Big]
\\ & =
  \sum_{ 
    i \in \mathcal{I}_N
    \backslash \mathcal{I}_M 
  }
  | b(i) |^2 
  \;
  \mathbb{E}\Big[ 
    \Big|
      \int^t_0
        ( t - s )^{-\alpha} \,
        e^{ - \lambda_i (t-s) } \, 
      d\beta^i_s
    \Big|^2
  \Big]
\cdot
  | e_i(x) - e_i(y) |^2
\\ & =
  \sum_{ 
    i \in \mathcal{I}_N
    \backslash \mathcal{I}_M 
  }
  | b(i)|^2
    \int^t_0 
      s^{-2\alpha} \,
      e^{  - 2 \lambda_i s } \,
    ds
\cdot
	| e_i(x) - e_i(y) |^{ 4 \alpha } \,
  | e_i(x) - e_i(y) |^{ (2 - 4 \alpha) }
\\ & \leq
  C
  \sum_{ 
    i \in \mathcal{I}_N
    \backslash \mathcal{I}_M 
  }
  | b(i) |^2 \,
  \| i \|_2^{
    ( 8 \alpha - 2 )  
  }
  \left\| 
    x - y 
  \right\|_2^{ 4 \alpha }
\end{split}
\end{equation}	
for every $ t \in [0,T]$
and every $ x,y \in [0,1]^d$.
In addition, note that
\begin{equation}
\label{oGaluse2}
\begin{split}
&
  \mathbb{E}\!\left[
    | Y^{N,M}_t(x) |^2
  \right]
=
  \mathbb{E}\Big[
  \Big|
    \sum_{ 
      i \in \mathcal{I}_N
      \backslash \mathcal{I}_M 
    }
    b(i) 
      \int^t_0
      ( t - s)^{ - \alpha } \,
      e^{ - \lambda_i (t-s) } \,
      d\beta^i_s
\cdot
    e_i(x)
  \Big|^2
  \Big]
\\
&=
  \sum_{ i \in \mathcal{I}_N
  \backslash \mathcal{I}_M }
    | b(i)|^2 \;
    \mathbb{E}\Big[
      \Big|
        \int^t_0
        ( t - s )^{ - \alpha } \,
        e^{ 
          - \lambda_i (t-s) 
        } 
        \,
        d\beta^i_s
      \Big|^2
    \Big]
    | e_i(x)|^2
\\ & =
  \sum_{ 
    i \in \mathcal{I}_N
    \backslash \mathcal{I}_M 
  }
  | b(i)|^2
    \int^{ 2 t \lambda_i }_0 
    s^{ - 2 \alpha } \,
    e^{ - s } \,
    ds
  ( 2 \lambda_i )^{ ( 2 \alpha - 1 ) } \,
  | e_i(x)|^2
  \leq
  C \!\!\!
  \sum_{ 
    i \in \mathcal{I}_N
    \backslash \mathcal{I}_M 
  }
  | b(i) |^2 \,
  \| i \|_2^{ 
    ( 8 \alpha - 2 )
  }
\end{split}
\end{equation}
for every $ t \in [0,T]$ and
every $ x, y \in [0,1]^d $.
In the next step the
Sobolev embeddings
in Subsections~2.2.4 
and 2.4.4
in \cite{rs96} give
\begin{equation}
\begin{split}
  \sup_{ 0 \leq t \leq T }
  \mathbb{E}
    \| 
      Y^{ N, M }_t
    \|_{ C( [0,1]^d, \mathbb{R} ) }^p
& \leq
  C
  \sup_{ 0 \leq t \leq T }
  \int_{
    ( 0, 1 )^d
  }
  \int_{
    ( 0, 1 )^d
  }
  \frac{
    \big(
      \mathbb{E}\big[
        | Y^{N,M}_t(x) - Y^{N,M}_t(y) |^2
      \big]
    \big)^{ { p/2 } }
  }{
    \| x - y \|_2^{ d + p \alpha  }
  }
  \, dx \, dy
\\ & \quad
  + C
  \sup_{ 0 \leq t \leq T }
  \int_{
    (0,1)^d
  }
    \left(
      \mathbb{E}\big[
        | Y^{N,M}_t(x)|^2
      \big]
    \right)^{ \! \frac{ p }{ 2 } }
  dx
\end{split}
\end{equation}
and 
\eqref{oGaluse1}, \eqref{oGaluse2} 
and Lemma \ref{esti2}
therefore imply
\begin{equation}
\begin{split}
&
  \sup_{ 0 \leq t \leq T }
    \mathbb{E}\big[
      \| 
        Y^{N,M}_t
      \|_{ C( [0,1]^d, \mathbb{R}) }^p
    \big]
\\ & \leq
    C
    \int_{
      ( 0, 1 )^d
    }
    \int_{
      ( 0, 1 )^d
    }
    \frac{
      \left(
        \sum_{ 
          i \in \mathcal{I}_N
          \backslash \mathcal{I}_M 
        }
        | b(i)|^2 \,
        \| i \|_2^{ ( 8 \alpha - 2 ) } \,
        \| x - y \|_2^{ 4 \alpha }
      \right)^{ \frac{ p }{ 2 } }
    }{
      \| x - y \|^{ ( d + p \alpha ) }
    }
    \, dx \, dy
\\ & \quad
  + C 
    \Big(\sum_{ 
      i \in \mathcal{I}_N
      \backslash \mathcal{I}_M 
    }
    | b(i) |^2 \,
    \| i \|_2^{ ( 8 \alpha - 2 ) }\Big)^{p/2 }
  \leq 
  C
  \Big(
    \sum_{ 
      i \in \mathcal{I}_N
      \backslash \mathcal{I}_M 
    }
    | b(i) |^2 \,
    \| i \|_2^{ ( 8 \alpha - 2 ) } 
  \Big)^{p/2 } .
\end{split}
\end{equation}
This and inequality~\eqref{useoGal}
then show \eqref{eq:toshow2}.
The proof of Lemma~\ref{oGalerkin}
is thus completed.
\end{proof}
%

%
%
%
%


%
%
%

\begin{proof}[Proof
of Lemma~\ref{constructOO}]
Throughout this proof
let
$ 
  O^N \colon [0,T] \times 
  \Omega
  \rightarrow 
  C( [0,1]^d, \mathbb{R} ) 
$,
$ 
  N \in \mathbb{N} 
$,
be a sequence of
stochastic processes
defined through
\eqref{eq:defON}.
Next note that
Lemma~\ref{oGalerkin} implies

\begin{equation}
\begin{split}
  \left( 
    \mathbb{E}\!\left[
      \sup_{0 \leq t \leq T}
      \| 
        O^{N}_t - O^M_t
      \|_{
        C([0,1]^d,  
        \mathbb{R})
      }^p
    \right]
  \right)^{ \! \frac{ 1 }{ p } }
 & \leq
  C
  \Big(
    \sum_{ 
      i \in \mathbb{N}^d
      \backslash
      \{1,\dots,M\}^d
    }
    | b(i)|^2 \,
    \| i\|_2^{ ( 8 \alpha - 2 ) }
  \Big)^{ \! \frac{ 1 }{ 2 } }
\\ & \leq
  C
  \Big(
    \sum_{ i \in \mathbb{N}^d} 
    | b(i) |^2 \,
    \| i \|_2^{ ( 2 \rho - 2 ) }
  \Big)^{ \! \frac{1}{2} }
  M^{ ( 4 \alpha - \rho ) }
\end{split}
\end{equation}
for every $ N, M \in \mathbb{N}$
with $N \geq M$,
every $ p \in [1,\infty)$
and every 
$
  \alpha \in 
  (0,\min(\frac{1}{2},\frac{\rho}{4}))
$
where
$ 
  C
  \in [0,\infty) 
$
is a constant
which depends on $ d $,
$ p $, $ \alpha $ and $ T $
only.
This, in particular, 
gives that 
$ 
  O^N \colon [0,T] \times
  \Omega \rightarrow 
  C( [0,1]^d, \mathbb{R} )
$,
$ 
  N \in \mathbb{N} 
$,
is a Cauchy
sequence in 
$
  L^p( \Omega;
    C( [0,T], 
      C( [0,1]^d, \mathbb{R} ) 
    )
  )
$.
Hence,
there exists a stochastic process
$ 
  \tilde{O} \colon [0,T] \times
  \Omega \rightarrow 
  C( [0,1]^d, \mathbb{R} ) 
$ 
with
continuous sample paths
which satisfies
\begin{equation}
   \Big( 
    \mathbb{E}\Big[
      \sup_{ 0 \leq t \leq T } 
      \| 
        \tilde{O}_t - O^N_t 
      \|_{ C( [0,1]^d, \mathbb{R} ) }^p
    \Big]
   \Big)^{ \! \frac{ 1 }{ p } }
  \leq
  C
  \Big(
    \sum_{ 
      i \in \mathbb{N}^d
    }
    | b(i) |^2 \,
    \| i \|_2^{ ( 2 \rho - 2 ) } 
  \Big)^{ \! \frac{ 1 }{ 2 } }
  N^{ ( 4 \alpha - \rho ) }
\end{equation}
for every 
$ 
  N \in \mathbb{N}
$,
every $ p \in [1,\infty)$ and
every 
$ 
  \alpha \in 
  (0,
    \min( \frac{1}{2}, \frac{\rho}{4} )
  )
$.
Therefore, we have
\begin{equation}
  \sup_{N \in \mathbb{N}}
  \left\{
    N^{ \gamma }
    \left( 
      \mathbb{E}\!\left[
        \sup_{ 0 \leq t \leq T }
        \|
          \tilde{O}_t - O^N_t
        \|_{ C( [0,1]^d, \mathbb{R}) }^p
      \right]
    \right)^{ \! \frac{ 1 }{ p } }  
  \right\}
  < \infty
\end{equation}
for every $ \gamma \in (0,\rho) $
and every $ p \in [1,\infty)$.
This implies
\begin{equation}
  \mathbb{P}\!\left[ \,
    \sup_{N \in \mathbb{N}}
    \left(
      N^\gamma
      \sup_{ 0 \leq t \leq T }
      \|
        \tilde{O}_t - O^N_t
      \|_{ C([0,1]^d,\mathbb{R}) } 
    \right)
    < \infty \,
  \right]
  = 1 
\end{equation}
for every $ \gamma \in (0,\rho) $
due to Lemma~2.1 in \cite{kn10}.
This yields
\begin{equation}
  \mathbb{P}\!\left[\;
    \forall
    \;
    \gamma \in (0,\rho) 
    \colon
    \sup_{N \in \mathbb{N}}\,
    \sup_{0 \leq t \leq T}
    \left(
      N^\gamma \,
      \|\tilde{O}_t - O^N_t
      \|_{C([0,1]^d,\mathbb{R})} 
    \right)
    < \infty
    \;
  \right]
  = 1 
\end{equation}
and hence, we obtain that
\begin{align}
\label{oGalusea}
&
  \mathbb{P}\!\left[\;
    \lim_{N\rightarrow \infty}
    \,
    \sup_{0 \leq t \leq T}
      \| 
        \tilde{O}_t -  O^N_t
      \|_{C([0,1]^d,\mathbb{R})}
    = 0
    \;
  \right]
  = 1 
  \qquad
  \text{and}
\\ 
\label{oGalusea2}
&
  \mathbb{P}\!\left[\;
    \forall
    \;
    \gamma \in (0,\rho) 
    \colon
    \sup_{N \in \mathbb{N}}\,
    \sup_{0 \leq t \leq T}
    \left(
      N^\gamma \,
      \|\tilde{O}_t - P_N( \tilde{O}_t )
      \|_{C([0,1]^d,\mathbb{R})} 
    \right)
    < \infty
    \;
  \right]
  = 1 .
\end{align}
In addition,
Lemma \ref{oholder2}
gives
\begin{align*}
  \Big( 
    \mathbb{E}\Big[
      \| 
        O^N_{t_2} - O^N_{t_1}
      \|_{ 
        C( [0,1]^d, \mathbb{R} ) 
      }^p
    \Big]
  \Big)^{ \! \frac{ 1 }{ p } }
& \leq
  \tilde{C}_{ d, p, \rho, \theta }
  \Big(
    \sum_{i \in \{1,\dots,N\}^d }
    | b(i) |^2 \,
    \| i \|_2^{ 
        4 \theta
        + 4( \frac{\rho}{2} - \theta) - 2 
    }
  \Big)^{ \! \frac{1}{2} }
  \left| t_2 - t_1 \right|^{
    \theta
  }
\\ & \leq
  \tilde{C}_{ d, p, \rho, \theta }
  \Big(
    \sum_{i \in \mathbb{N}^d }
    | b(i) |^2 \,
    \| i \|_2^{ ( 2 \rho - 2 ) }
  \Big)^{ 
    \! \frac{1}{2} 
  }
  \left| 
    t_2 - t_1 
  \right|^{
    \theta
  }
\end{align*}
for every $ t_1, t_2 \in [0,T]$,
$N \in \mathbb{N}$,
$ p \in [1,\infty) $
and every 
$ 
  \theta \in (0,\frac{\rho}{2})
  \cap
  [0, \frac{1}{2} ]
$
where 
$ 
  \tilde{C}_{ d, p, \rho, \theta } 
  \in [0,\infty)
$
is a constant
which depends
on $ d $, $ p $, $ \rho $
and $ \theta $ only.
This shows
\begin{equation}
  \left( 
    \mathbb{E}\!\left[
      \| 
        \tilde{O}_{t_2} - \tilde{O}_{t_1}
      \|_{ C([0,1]^d,\mathbb{R}) }^p
    \right]
  \right)^{ \! \frac{ 1 }{ p } }
\leq
  \tilde{C}_{ d, p, \rho, \theta }
  \Big(
    \sum_{ i \in \mathbb{N}^d }
    | b(i) |^2 \,
    \| i \|_2^{ 2 \rho - 2 }
  \Big)^{ 
    \! \frac{1}{2} 
  }
 | t_2 - t_1 |^{\theta}
\end{equation}
for every $ t_1, t_2 \in [0,T]$,
$ p \in [1,\infty) $
and every $ \theta \in 
\left(0,\frac{\rho}{2}\right)$,
$\theta \leq \frac{1}{2}$.
Kolmogorov's theorem 
(see, e.g., 
Theorem~3.3 
in \cite{dz92}) 
hence yields
\begin{equation}
  \mathbb{P}\!\left[
    \sup_{ 0 \leq t_1 < t_2 \leq T }
    \frac{
      \| 
        \tilde{O}_{t_2} - \tilde{O}_{t_1}
      \|_{ C([0,1]^d,\mathbb{R}) }
    }{
      | t_2 - t_1 |^{ \theta }
    }
    < \infty
  \right] = 1
\end{equation}
for every 
$ 
  \theta \in 
  \left(
    0,
    \min\left(
      \frac{ 1 }{ 2 }, \frac{ \rho }{ 2 }
    \right)
  \right)
$.
This implies
\begin{equation}
\label{oGaluseb}
  \mathbb{P}\!\left[
    \,
    \forall
    \;
    \theta \in 
    ( 
      0, \min\{\tfrac{1}{2},
      \tfrac{\rho}{2}\} 
    )
    \colon
    \sup_{ 0 \leq t_1 < t_2 \leq T }
    \frac{
      \| 
        \tilde{O}_{ t_2 } - \tilde{O}_{ t_1 }
      \|_{
        C( [0,1]^d, \mathbb{R})
      }
    }{
      \left| t_2 - t_1 \right|^\theta
    }
    < \infty
    \,
  \right] = 1 .
\end{equation}
Combining
\eqref{oGalusea2} and
\eqref{oGaluseb}
shows 
the existence 
of a stochastic
process
$ 
  O \colon [0,T] \times \Omega 
  \rightarrow 
  C( [0,1]^d,\mathbb{R} ) 
$
with continuous sample paths
which is indistinguishable from 
$ 
  \tilde{O}
$, i.e.,
$
  \mathbb{P}\big[
    \, \forall 
    \, t \in [0,T] 
    \colon
    O_t = \tilde{O}_t 
    \,
  \big]
  = 1 
$
and which satisfies
$
  \sup_{0 \leq t_1 < t_2 \leq T}
  \frac{
    \| 
      O_{t_2}(\omega)
      - O_{t_1}(\omega)
    \|_{
      C( [0,1]^d, \mathbb{R} )
    }
  }{
    \left| t_2 - t_1 \right|^{ \theta }
  }
$
$
  < \infty
$
and
$
  \sup_{N \in \mathbb{N}}
  \sup_{0 \leq t \leq T}
  \left(
    N^{ \gamma }
    \left\| 
      O_t(\omega) -
      P_N( O_t(\omega) )
    \right\|_{
      C( [0,1]^d, \mathbb{R} )
    }
  \right)
  < \infty
$
for every 
$ 
  \omega \in \Omega 
$,
every 
$ 
  \theta \in 
  \left(0,
    \min\left(\frac{1}{2},\frac{\rho}{2}\right)
  \right)
$ 
and
every 
$ 
  \gamma \in (0,\rho)
$.
The proof of 
Lemma~\ref{constructOO}
is thus completed.
\end{proof}
%

%
%
%
%
%
\subsection{Proofs
for Subsection~\ref{stochburger}}

\subsubsection{Proof of
Lemma~\ref{BurgerSG}}
\label{sec:burgerSG}
%
%
%
%
\begin{proof}
First, note that
\begin{align*}
&
  \sum^{\infty}_{n=1}
    n^{ 2 + 2 \gamma }
    \, 
    e^{ - 2 n^2 \pi^2 t }
\leq
  \int^{\infty}_0 ( x + 1 )^{  2 + 2 \gamma  }
  e^{ - 2 x^2 \pi^2 t }
  \, dx
\\&\leq
  \int^{\infty}_0
  8 \big( x^{ 2 + 2 \gamma } + 1 )
  e^{ - 2 x^2 \pi^2 t }
  \, dx
=
  \frac{ 1 }{
    2 \pi \sqrt{t}
  }
  \int^{\infty}_0
  8 \Big( 
    \frac{ x^{ 2 + 2 \gamma  } 
    }{ 
      ( 2 \pi \sqrt{t} )^{ 2 + 2 \gamma } 
    } + 1 \Big)
 e^{ - \frac{ x^2 }{ 2 } }
  dx
\\ & \leq
  \frac{4}{
    \pi \sqrt{t}
  }
  \int^{\infty}_0
    \Big( 
    \frac{ x^{ 2 + 2 \gamma } 
    }{ 
      t^{  1 + \gamma  } } + 1 \Big)
  e^{ - \frac{ x^2 }{ 2 } }
  dx
\leq
  \frac{4}{
    \pi \sqrt{t}
  }
  \int^{ \infty }_0
    \left( 
    \frac{ 
       x^4 + 1 
    }{  
      t^{ 1 + \gamma  } } + 
      \frac{ T^{ 1 + \gamma } }{ 
        t^{  1 + \gamma  } 
      } \right)
  e^{ - \frac{ x^2 }{ 2 } } 
  dx
\\&\leq
  \frac{4 \sqrt{2 \pi} }{
    t^{  \frac{3}{2} + \gamma } \pi
  }
  \int_{\mathbb{R}}
    \frac{ 
      x^4 +   
      T^2 + 2
    }{ \sqrt{ 2 \pi } }
    \,	
    e^{ - \frac{ x^2 }{ 2 } }
  dx
\leq
  \frac{
    4 ( T^2 + 5 ) 
  }{
    t^{  \frac{3}{2} + \gamma
    }
  }
\end{align*}
for every $ t \in (0,T] $
and every $ \gamma \in [0,\frac{1}{2})$.
The identity
$
  \| w \|_{ H^{-1} }
$
$
  =
$
$
  \sum^{\infty}_{n=1}
  n^{ - 2 } \pi^{ - 2 }
$
$
  | 
    w( \sqrt{2} \sin( n \pi \cdot )
    )    
  |^2 
$
for every 
$ 
  w \in 
  H^{-1}( 
    (0,1), \mathbb{R}
  ) 
$
hence gives
\begin{align*}
&
  \sup_{ 0 \leq x \leq 1 }
  \Big(
  \sum^{\infty}_{n=N}
  2 \cdot e^{ - n^2 \pi^2 t }
  \cdot |w(\sin( n \pi ( \cdot ) ) )|
  \cdot
  | \sin( n \pi x )|
  \Big)
\\&\leq 
  \pi \sqrt{2}
  \sum^{\infty}_{n=N}
  n \, e^{ - n^2 \pi^2 t }
  \,
  \frac{ 
    | w( \sqrt{2} \sin( n \pi ( \cdot ) ) )| 
  }{ 
    n \pi 
  }
\\&\leq
  \pi \sqrt{2} 
  \Big(
    \sum^{\infty}_{n=N}
    n^2 \, e^{ - 2 n^2 \pi^2 t }
  \Big)^{ \! \frac{1}{2} }
  \| w \|_{ H^{-1} }
\leq
  \pi \sqrt{2} \,N^{-\gamma}
  \Big(
  \sum^{\infty}_{n=N}
    n^{ (2 + 2 \gamma) } \,
    e^{ - 2 n^2 \pi^2 t }
  \Big)^{ \! \frac{1}{2} }
  \| w \|_{ H^{-1} }
\\&\leq
  \pi \sqrt{2} \,N^{-\gamma}
  \left(
    4 \left( T^2 + 5 \right)
    t^{ 
      - \left( \frac{3}{2} + \gamma 
      \right) 
    }
  \right)^{ \frac{1}{2} }
  \| w \|_{ H^{-1} }
\leq
 10 \left( T + 3 \right) 
      t^{ -( \frac{3}{4} + \frac{ \gamma }{2} ) }
      N^{-\gamma}
  \,
  \| w \|_{ H^{-1} }
\end{align*}
for every 
$ 
  w \in 
  H^{-1}( 
    (0,1), \mathbb{R}
  ) 
$, 
$ N \in \mathbb{N} $,
$ 
  \gamma \in [0,\frac{1}{2}) 
$
and every 
$ 
  t \in (0,T] 
$.
This implies 
$
  \| 
    S_t( w )
  \|_{ C( [0,1], \mathbb{R}) }
\leq
  10 \, ( T + 3 ) \,
  t^{ - \frac{3}{4} } 
  \, 
  \| w\|_{ H^{-1} }
$
and
\begin{equation}
  \|  
    S_t(w) - P_N( S_t( w ) )
  \|_{ 
    C( [0,1], \mathbb{R})
  }
\leq
  \frac{ 
    10 \left( T + 3 \right) 
    \| w \|_{ H^{-1} }
  }{
    t^{ 
      \left( \frac{3}{4} + \frac{\gamma}{2} 
      \right) 
    } 
    \left( N + 1 \right)^\gamma
  }
\leq
  \frac{ 
    10 \left( T + 3 \right) 
    \| w \|_{ H^{-1} }
  }{
    t^{ 
      \left( 
        \frac{3}{4} + \frac{\gamma}{2}
      \right)  
    }
    N^{ \gamma }
  }
\end{equation}
for every 
$ t \in (0,T] $,
$ w \in 
H^{-1}\left( (0,1), \mathbb{R} \right) $,
$ \gamma \in [0,\frac{1}{2}) $
and every $ N \in \mathbb{N} $.
Therefore, we finally obtain
$
  \sup_{0 < t \leq T }
  \big(
  t^{ \frac{3}{4} }
  \left\| S_t \right\|_{ 
    L( H^{-1}( (0,1), \mathbb{R} ), \,
    C( [0,1], \mathbb{R}) ) 
  }
  \big)
  < \infty 
$
and
\begin{equation}
  \sup_{N \in \mathbb{N}}
  \sup_{0 < t \leq T }
  \Big(
  t^{ 
    \left( 
      \frac{3}{4} + \frac{\gamma}{2} 
    \right)
  }
  N^\gamma
  \left\| S_t - P_N S_t \right\|_{ 
    L( H^{-1}( (0,1), \mathbb{R} ), \,
    C( [0,1], \mathbb{R}) ) 
  }
  \Big)
  < \infty 
\end{equation}
for every $ \gamma \in [0,\frac{1}{2}) $.
The proof of 
Lemma~\ref{BurgerSG}
is thus completed.
\end{proof}
%

%
%
\subsubsection{Proof of 
Lemma~\ref{BurgerSol}}
\label{sec:BurgerSol}
%
%
%

In the proof of Lemma~\ref{BurgerSol}
the following well known
estimates for the analytic
semigroup generated by
the Laplacian are used.
Their proofs can, e.g., be found
in Lemma~5.8 
in \cite{bj09}.

\begin{lemma}
\label{lemmadditional}
Let 
$ 
  S \colon (0,T] \rightarrow 
  L\big( H^{-1} ((0,1), \mathbb{R} ), 
  C ( [0,1], \mathbb{R}) \big) 
$ 
be given by Lemma~\ref{BurgerSG} 
and 
let 
$ 
  P_N \colon C ( [0,1], \mathbb{R}) 
  \rightarrow C ( [0,1], \mathbb{R})
$, 
$ N \in \mathbb{N} $, 
be given by \eqref{projoperator}.
Then
\begin{equation*}
\| P_N S_t \|_{ 
    L( 
      L^2( (0,1), \mathbb{R} ), 
      C( [0,1], \mathbb{R} )  
    ) 
  } 
  \leq t^{-\frac{1}{4}} \, , \qquad
  \left\| 
    P_N S_t 
  \right\|_{ L( 
    L^2( (0,1), \mathbb{R} ), 
    L^4( (0,1), \mathbb{R} ) 
  ) } 
  \leq t^{-\frac{1}{8}}\;,
\end{equation*}
\begin{equation*}
 \| S_t \|_{ L( 
    H^{-1}( (0,1), \mathbb{R} ), 
    L^2( (0,1), \mathbb{R} ) 
  ) } 
  \leq t^{-\frac{1}{2}} \,
  \quad\text{and}\quad
  \|  S_t (v') \|_{L^2}
  \leq 
  4 \, (t+1) \, 
  t^{ - \frac{3}{4} } \,
  \| v \|_{L^1}
\end{equation*}
for every $ t \in (0, T], N \in \mathbb{N} $ and every $ v \in C^1 ( [0,1], \mathbb{R} ) $.
\end{lemma}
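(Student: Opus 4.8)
The plan is to work throughout in the $L^2((0,1),\mathbb{R})$-orthonormal eigenbasis $e_n := \sqrt{2}\,\sin(n\pi\,\cdot\,)$, $n\in\mathbb{N}$, of the Dirichlet Laplacian, which satisfies $-e_n'' = \lambda_n e_n$ with $\lambda_n = n^2\pi^2$. Writing $\hat v_n := \langle v, e_n\rangle_{L^2}$ for $v\in L^2((0,1),\mathbb{R})$ and $\hat w_n := w(e_n)$ for $w\in H^{-1}((0,1),\mathbb{R})$, one has $S_t v = \sum_{n=1}^\infty e^{-\lambda_n t}\hat v_n\,e_n$, $P_N S_t v = \sum_{n=1}^N e^{-\lambda_n t}\hat v_n\,e_n$, and the norm identity recorded in the proof of Lemma~\ref{BurgerSG}, namely $\|w\|_{H^{-1}}^2 = \sum_{n=1}^\infty \lambda_n^{-1}\,\hat w_n^2$. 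In this way each of the four bounds reduces to an elementary estimate for a scalar sum $\sum_{n}\lambda_n^{k}e^{-2\lambda_n t}$, to be controlled either by monotone integral comparison (exactly as in the proof of Lemma~\ref{BurgerSG}) or by maximizing $\lambda\mapsto\lambda^{k}e^{-2\lambda t}$ over $\lambda>0$.

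For the first bound I would use $|e_n|\le\sqrt{2}$ together with the Cauchy--Schwarz inequality to get $\|P_N S_t v\|_{C([0,1],\mathbb{R})}\le\sqrt{2}\sum_{n=1}^\infty e^{-\lambda_n t}|\hat v_n|\le\sqrt{2}\,\big(\sum_{n=1}^\infty e^{-2\lambda_n t}\big)^{1/2}\|v\|_{L^2}$. Since $x\mapsto e^{-2\pi^2 t x^2}$ is decreasing, the integral test gives $\sum_{n=1}^\infty e^{-2\lambda_n t}\le\int_0^\infty e^{-2\pi^2 t x^2}\,dx=\tfrac{1}{2}(2\pi t)^{-1/2}$, and collecting constants leaves the prefactor $(2\pi)^{-1/4}\le 1$, hence $\|P_N S_t\|_{L(L^2,C([0,1],\mathbb{R}))}\le t^{-1/4}$. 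The second bound then comes for free by interpolation: since $P_N$ is an orthogonal projection and $\sup_n e^{-\lambda_n t}\le 1$ yield $\|P_N S_t v\|_{L^2}\le\|v\|_{L^2}$, the elementary inequality $\|f\|_{L^4}\le\|f\|_{L^2}^{1/2}\|f\|_{L^\infty}^{1/2}$ combined with the first bound produces $\|P_N S_t v\|_{L^4}\le\|v\|_{L^2}^{1/2}\big(t^{-1/4}\|v\|_{L^2}\big)^{1/2}=t^{-1/8}\|v\|_{L^2}$.

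For the third bound, orthonormality gives $\|S_t w\|_{L^2}^2=\sum_{n=1}^\infty e^{-2\lambda_n t}\hat w_n^2=\sum_{n=1}^\infty\big(\lambda_n e^{-2\lambda_n t}\big)\,\lambda_n^{-1}\hat w_n^2$, so pulling out the supremum yields $\|S_t w\|_{L^2}^2\le\big(\sup_{\lambda>0}\lambda e^{-2\lambda t}\big)\,\|w\|_{H^{-1}}^2$. The maximum is attained at $\lambda=\tfrac{1}{2t}$ with value $\tfrac{1}{2et}$, whence $\|S_t w\|_{L^2}\le(2e)^{-1/2}t^{-1/2}\|w\|_{H^{-1}}\le t^{-1/2}\|w\|_{H^{-1}}$.

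The last bound is the delicate one and is where I expect the main obstacle. Testing $\partial v$ against $e_n\in H^1_0$ and integrating by parts (boundary terms vanish because $e_n(0)=e_n(1)=0$) gives $\widehat{\partial v}_n=-\langle v,e_n'\rangle_{L^2}=-\sqrt{2}\,n\pi\int_0^1 v(s)\cos(n\pi s)\,ds$, so $|\widehat{\partial v}_n|\le\sqrt{2}\,n\pi\,\|v\|_{L^1}$. Hence $\|S_t(\partial v)\|_{L^2}^2=\sum_{n=1}^\infty e^{-2\lambda_n t}\,\widehat{\partial v}_n^2\le 2\pi^2\|v\|_{L^1}^2\sum_{n=1}^\infty n^2 e^{-2n^2\pi^2 t}$, and everything hinges on estimating $\sum_{n=1}^\infty n^2 e^{-2n^2\pi^2 t}$. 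I would reuse the shifted integral comparison from the proof of Lemma~\ref{BurgerSG}, i.e.\ $n^2 e^{-2n^2\pi^2 t}\le(x+1)^2 e^{-2\pi^2 t x^2}$ for $x\in[n-1,n]$, to obtain $\sum_{n=1}^\infty n^2 e^{-2n^2\pi^2 t}\le\int_0^\infty(x+1)^2 e^{-2\pi^2 t x^2}\,dx$; bounding $(x+1)^2\le 2x^2+2$ and evaluating the two Gaussian integrals gives a bound of the form $C_1 t^{-3/2}+C_2 t^{-1/2}=t^{-3/2}(C_1+C_2 t)$. Taking the square root and recombining the two powers of $t$ into the single envelope $(t+1)\,t^{-3/4}$, while tracking the explicit constants so that the resulting prefactor does not exceed $4$, is the genuinely fiddly step; it yields $\|S_t(\partial v)\|_{L^2}\le 4\,(t+1)\,t^{-3/4}\|v\|_{L^1}$ and completes the proof.
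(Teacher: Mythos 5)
Your argument is correct, and in fact the paper does not prove this lemma at all: it only refers to Lemma~5.8 in the preprint version \cite{bj09}. The only in-paper material to measure your proof against is therefore the proof of Lemma~\ref{BurgerSG}, and the two devices you use are exactly the ones employed there: diagonalization of $S_t$ and $P_N$ in the orthonormal basis $e_n=\sqrt{2}\sin(n\pi\,\cdot\,)$ together with the identity $\|w\|_{H^{-1}}^2=\sum_{n}\lambda_n^{-1}|w(e_n)|^2$, and the shifted integral comparison $\sum_{n\ge 1}n^k e^{-2n^2\pi^2 t}\le\int_0^\infty (x+1)^k e^{-2\pi^2 t x^2}\,dx$. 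The first three estimates are verified exactly as you say (the prefactors $(2\pi)^{-1/4}$ and $(2e)^{-1/2}$ are both below $1$, and the $L^2$--$L^\infty$ interpolation for the $L^4$ bound is immediate). For the fourth estimate, the constant you leave partially unchecked does work out: $\int_0^\infty(2x^2+2)e^{-2\pi^2 t x^2}dx=\tfrac{\sqrt{\pi}}{2}(2\pi^2 t)^{-3/2}+\sqrt{\pi}\,(2\pi^2 t)^{-1/2}\le\tfrac{1}{\sqrt{2\pi}}\,t^{-3/2}(1+t)$, since the coefficient of $t^{-3/2}$ is $2^{-5/2}\pi^{-5/2}\approx 0.010$ while that of $t^{-1/2}$ is $(2\pi)^{-1/2}\approx 0.399$; hence $\|S_t(v')\|_{L^2}^2\le 2\pi^2\cdot(2\pi)^{-1/2}(1+t)\,t^{-3/2}\|v\|_{L^1}^2\le 8\,(1+t)\,t^{-3/2}\|v\|_{L^1}^2$, and $\sqrt{8}\,(1+t)^{1/2}\le 4\,(1+t)$ gives the stated bound with room to spare. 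So the proof is complete once those two Gaussian integrals are written out.
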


In addition to 
Lemma~\ref{lemmadditional},
the following elementary global coercivity 
estimate for Burgers equation
is used in the proof
of Lemma~\ref{BurgerSol}
(see also Lemma~3.1 
in \cite{GdP-AD-RT:94}).

\begin{lemma}
\label{lemburger}
Let 
$ 
  F \colon C( [0,1], \mathbb{R}) 
  \rightarrow 
  H^{-1}( (0,1), \mathbb{R} ) 
$ 
be given by 
Lemma~\ref{BurgerNonl}.
Then
\begin{equation}
  \left< v, v'' + F(v+w) \right>_{L^2}
  \leq  
  2 \,c^2  
  \| v \|^2_{L^2}
  \| w \|^2_{ C([0,1] , \mathbb{R}) } 
  + 
  2 \, c^2 
  \| w \|^4_{ C([0,1] , \mathbb{R}) }
\end{equation}
for all twice
continuously differentiable functions
$ 
  v \colon [0,1] 
  \rightarrow \mathbb{R} 
$
and 
$ 
  w \colon [0,1] \rightarrow 
  \mathbb{R} 
$
with 
$ v(0) = v(1) = 0 $ 
and where 
$ c \in \mathbb{R} $ 
is used in Lemma~\ref{BurgerNonl}.
\end{lemma}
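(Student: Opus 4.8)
The plan is to test the identity against $v$ in $L^2$ and to treat the linear term $v''$ and the nonlinear term $F(v+w)$ separately. For the linear part, since $v$ is twice continuously differentiable with $v(0)=v(1)=0$, integration by parts gives $\langle v, v''\rangle_{L^2} = [v\,v']_0^1 - \int_0^1 (v'(x))^2\,dx = -\|v'\|_{L^2}^2$, the boundary contribution vanishing. For the nonlinear part I would avoid any smoothness bookkeeping by invoking the definition of the distributional derivative $\partial$ recalled before the lemma: since $v \in H^1_0((0,1),\mathbb{R})$ and $F(v+w)=c\,\partial\big((v+w)^2\big)$, the pairing reads $\langle v, F(v+w)\rangle_{L^2} = c\,\big(\partial((v+w)^2)\big)(v) = -c\,\langle (v+w)^2, v'\rangle_{L^2} = -c\int_0^1 (v(x)+w(x))^2\,v'(x)\,dx$ (equivalently, $F(v+w)$ is a genuine $L^2$ function here because $v,w$ are smooth, and one integrates by parts).

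Next I would exploit the classical Burgers cancellation. Expanding $(v+w)^2 = v^2 + 2vw + w^2$ and using $v^2 v' = \tfrac13 (v^3)'$ together with $v(0)=v(1)=0$ shows $\int_0^1 v^2 v'\,dx = \tfrac13[v^3]_0^1 = 0$, so that $\langle v, F(v+w)\rangle_{L^2} = -c\int_0^1 v'\,(2vw + w^2)\,dx$. Hence $\langle v, v'' + F(v+w)\rangle_{L^2} = -\|v'\|_{L^2}^2 - c\int_0^1 v'\,(2vw + w^2)\,dx$, and every remaining term now carries exactly one factor of $v'$, which is precisely the quantity I intend to absorb into $-\|v'\|_{L^2}^2$.

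For the estimate I would bound the two cross terms by Cauchy--Schwarz, using $\|w\|_{C([0,1],\mathbb{R})}$ to control the factors of $w$ and the elementary inequality $\int_0^1 |v'(x)|\,dx \le \|v'\|_{L^2}$ (Cauchy--Schwarz on the probability space $(0,1)$) for the term without a second factor of $v$. This produces $\langle v, v'' + F(v+w)\rangle_{L^2} \le -\|v'\|_{L^2}^2 + \big(2|c|\,\|w\|_{C([0,1],\mathbb{R})}\|v\|_{L^2} + |c|\,\|w\|_{C([0,1],\mathbb{R})}^2\big)\,\|v'\|_{L^2}$. Writing $a = |c|\,\|w\|_{C([0,1],\mathbb{R})}\|v\|_{L^2}$ and $b = |c|\,\|w\|_{C([0,1],\mathbb{R})}^2$, the right-hand side has the form $-p^2 + (2a+b)p$ with $p = \|v'\|_{L^2}\ge 0$, whose maximum over $p\ge 0$ is $\tfrac14(2a+b)^2$. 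The crude inequality $(2a+b)^2 \le 2(2a)^2 + 2b^2 = 8a^2 + 2b^2$ then yields $\tfrac14(2a+b)^2 \le 2a^2 + \tfrac12 b^2 \le 2a^2 + 2b^2 = 2c^2\|v\|_{L^2}^2\|w\|_{C([0,1],\mathbb{R})}^2 + 2c^2\|w\|_{C([0,1],\mathbb{R})}^4$, which is exactly the claimed bound.

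The one genuinely delicate point is the nonlinear pairing: the whole estimate hinges on moving the derivative off $(v+w)^2$ and onto $v$ --- either through the definition of $\partial$ or through integration by parts --- and then observing that the purely quadratic-in-$v$ contribution drops out via the cancellation $\int_0^1 v^2 v'\,dx = 0$. Once the surviving terms are linear in $v'$, completing the square against $-\|v'\|_{L^2}^2$ is routine, and only a wasteful use of Young's inequality is required to land on the stated constants $2c^2$; the computation above shows there is in fact room to spare.
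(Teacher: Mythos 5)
Your proposal is correct and follows essentially the same route as the paper: integrate by parts (equivalently, use the definition of $\partial$), kill the cubic term via $\int_0^1 v^2 v'\,dx = 0$, bound the surviving terms $-c\int_0^1 v'(2vw+w^2)\,dx$ by Cauchy--Schwarz, and absorb the resulting factor of $\|v'\|_{L^2}$ into $-\|v'\|_{L^2}^2$ coming from $\langle v, v''\rangle_{L^2}$. The only cosmetic difference is that you maximize the quadratic $-p^2+(2a+b)p$ in $p=\|v'\|_{L^2}$ while the paper applies Young's inequality twice and cancels the leftover $\|v'\|_{L^2}^2$; both land on the same constants.
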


\begin{proof}
Integration by parts and 
the identity
$
  \int_0^1 v'(x) \, | v(x) |^2 \, dx=0
$
imply
\begin{equation}
\begin{split}
  \left< v, F(v + w) \right>_{L^2} 
&   = 
  - \, 2 c 
  \int_0^1 
    v'(x) \cdot v(x) \cdot w(x) \, 
  dx
  - 
  c 
  \int_0^1 
    v'(x) \cdot | w(x) |^2 \, 
  dx
\\&\leq
   2 | c |
  \left(
    \| v \|_{L^2} \cdot
    \| w \|_{ C([0,1] , \mathbb{R}) } 
    + 
     \| w \|^2_{ C([0,1] , \mathbb{R}) }
  \right) \cdot
    \| v' \|_{L^2}
\\ & \leq
   2 c^2 \| v \|^2_{L^2}
    \| w \|^2_{ C([0,1] , \mathbb{R}) } 
  +
      2 c^2 
    \| w \|_{ C([0,1] , \mathbb{R}) }^4
  +
    \| v' \|_{L^2}^2
\end{split}
\end{equation}
and therefore using integration by parts
\[
   \left< v, v'' + F(v+w) \right>_{L^2}
  \leq  
  2 c^2\| v \|^2_{L^2}
      \| w \|^2_{ C([0,1] , \mathbb{R}) } 
 + 
     2 c^2 
     \| w\|^4_{ C( [ 0,1 ] , \mathbb{R}) }
\]
for all twice continuously 
differentiable 
functions 
$
  v, w \colon [0,1] \rightarrow \mathbb R
$
with 
$ 
  v(0) = v(1) = 0
$.
The proof of Lemma~\ref{lemburger}
is thus completed.
\end{proof}

In the next step note that
Lemma~\ref{BurgerSol}
follows by combining 
the next lemma 
(see also Lemma~3.1 
in 
\cite{GdP-AD-RT:94}
for a related result)
and 
a standard fix point argument
(see also Theorem~3.2
in 
\cite{GdP-AD-RT:94}).

\begin{lemma}
\label{lemproof}
Let $ \tau \in (0,T], N \in \mathbb{N} $ 
and let 
$
  x_N \colon [0,\tau] \rightarrow 
  P_N( C([0,1], \mathbb{R}) ) 
$ 
and 
$ 
  o_N \colon 
  [0,\tau] \rightarrow 
  P_N( C([0,1], \mathbb{R}) ) 
$ 
be two continuous functions
which satisfy
$
  x_N(t) = \int_{0}^{t}  P_N \, S_{ t - s } 
  \, F ( x_N(s) ) \, ds + o_N(t)
$
for every $ t \in [0, \tau] $. 
Then
\begin{equation}
  \sup_{0 \leq t \leq \tau}
  \| x_N(t) \|_{C ( [0,1], \mathbb{R})}
  \leq 
  \exp
  \Big(
    24 ( c^2 + 1) ( T + 1 )
    \Big(
      \sup_{0 \leq t \leq \tau}
      \| o_N(t)\|^2_{C ( [0,1], \mathbb{R} )}   
      + 1
    \Big)
  \Big) ,
\end{equation}
where $ c \in \mathbb{R} $ is
used in Lemma \ref{BurgerNonl}.
\end{lemma}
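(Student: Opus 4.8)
The plan is to first derive an \emph{a priori} $L^2$-bound on $x_N$ by an energy estimate built on the coercivity Lemma~\ref{lemburger}, and then to \emph{bootstrap} this $L^2$-bound up to the desired bound in $C([0,1],\mathbb{R})$, passing through the intermediate space $L^4((0,1),\mathbb{R})$ and invoking the smoothing estimates of Lemma~\ref{lemmadditional} and Lemma~\ref{BurgerSG}. Throughout I abbreviate $R := \sup_{0\leq t\leq\tau}\|o_N(t)\|_{C([0,1],\mathbb{R})}$, so that the target bound is $\exp(24(c^2+1)(T+1)(R^2+1))$.

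First I would set $v_N := x_N - o_N$, so that $v_N(t) = \int_0^t P_N\,S_{t-s}\,F(x_N(s))\,ds$ with $v_N(0)=0$. Since $x_N(t)$ and $o_N(t)$ lie in the finite-dimensional space $P_N(C([0,1],\mathbb{R}))$ of sine polynomials, each of $v_N(t)$ and $o_N(t)$ is smooth in $x$ and vanishes at $x\in\{0,1\}$, and $t\mapsto v_N(t)$ is continuously differentiable with $\dot v_N = \Delta v_N + P_N\,F(x_N)$. Using the self-adjointness of $P_N$ together with $v_N(t)\in\mathrm{range}(P_N)$ reduces $\tfrac{1}{2}\tfrac{d}{dt}\|v_N\|_{L^2}^2$ to the pairing $\langle v_N,\,v_N'' + F(v_N+o_N)\rangle_{L^2}$, to which Lemma~\ref{lemburger} applies with $v=v_N$ and $w=o_N$. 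This gives $\tfrac{d}{dt}\|v_N(t)\|_{L^2}^2 \leq 4c^2 R^2\,\|v_N(t)\|_{L^2}^2 + 4c^2 R^4$, and Gronwall's inequality (using $v_N(0)=0$) yields $\sup_{0\leq t\leq\tau}\|v_N(t)\|_{L^2}^2 \leq R^2\,(e^{4c^2 R^2 T}-1)$ and hence an $L^2$-bound on $x_N = v_N + o_N$, since $\|o_N\|_{L^2}\leq\|o_N\|_{C([0,1],\mathbb{R})}$.

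For the bootstrap I would work from the mild formula $v_N(t) = c\int_0^t P_N\,S_{t-s}\,\partial(x_N(s)^2)\,ds$. The \emph{main obstacle} is that the naive one-step passage from $L^2$ to $C([0,1],\mathbb{R})$ fails: composing $\|P_N S_\tau\|_{L(L^2,C)}\lesssim\tau^{-1/4}$ with an $L^1$-to-$L^2$ smoothing of $\partial$ of order $\tau^{-3/4}$ produces the non-integrable singularity $\tau^{-1}$. I would circumvent this by interposing $L^4$. Splitting $S_{t-s}=S_{(t-s)/2}S_{(t-s)/2}$ and combining $\|P_N S_\tau\|_{L(L^2,L^4)}\leq\tau^{-1/8}$ with $\|S_\tau(v')\|_{L^2}\leq 4(\tau+1)\tau^{-3/4}\|v\|_{L^1}$ from Lemma~\ref{lemmadditional}, together with $\|x_N^2\|_{L^1}=\|x_N\|_{L^2}^2$, produces the integrable singularity $\tau^{-7/8}$ and thus an $L^4$-bound on $x_N$ in terms of the $L^2$-bound. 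Then, using the uniform-in-$N$ estimate $\|P_N S_\tau\|_{L(H^{-1},C)}\lesssim\tau^{-3/4}$ (which I read off from Lemma~\ref{BurgerSG}, and which is essential because $\|P_N\|_{L(C)}$ itself is \emph{not} uniformly bounded by the Daugavet property) together with $\|\partial(x_N^2)\|_{H^{-1}}\leq\|x_N^2\|_{L^2}=\|x_N\|_{L^4}^2$, the integrable singularity $\tau^{-3/4}$ yields the $C([0,1],\mathbb{R})$-bound on $x_N$ in terms of the $L^4$-bound.

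Finally I would chain the three estimates. The $L^2$-bound is exponential in $R^2$ of the form $R\,e^{2c^2 R^2 T}$, whereas each of the two bootstrap steps only multiplies a prefactor that is polynomial in $R$ and constant in $N$ (depending on $c$ and $T$ through $\int_0^T\sigma^{-7/8}\,d\sigma$ and $\int_0^T\sigma^{-3/4}\,d\sigma$ and the factors $\tau+1\leq T+1$). Dominating these polynomial prefactors by exponentials and absorbing all numerical constants by crude majorizations then gives the stated bound $\exp(24(c^2+1)(T+1)(R^2+1))$; the exact value of the constant $24$ is inessential and is produced by elementary estimates such as $8c^2R^2T\leq 8(c^2+1)(T+1)(R^2+1)$ with the remaining room absorbing the polynomial factors. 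I expect the delicate point to be precisely the $L^2\to C$ bootstrap, both in choosing $L^4$ to keep the time singularities integrable and in keeping every constant uniform in $N$.
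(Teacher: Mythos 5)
Your proposal follows essentially the same route as the paper's proof: the energy estimate via Lemma~\ref{lemburger} applied to $y_N = x_N - o_N$ with Gronwall for the $L^2$ bound, then the two-step bootstrap through $L^4$ using the splitting $S_{t-s}=S_{(t-s)/2}S_{(t-s)/2}$ and the smoothing estimates of Lemma~\ref{lemmadditional}, and finally the $H^{-1}\to C([0,1],\mathbb{R})$ step with integrable singularity $(t-s)^{-3/4}$. Your observation that the direct $L^2\to C$ passage would produce the non-integrable singularity $\tau^{-1}$ is precisely the reason the paper interposes $L^4$, so the argument is correct and matches.
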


\begin{proof}
First, note that 
the definition of
$ 
  S \colon (0,T]
  \rightarrow L( W, V )
$
in Lemma~\ref{BurgerSG}
implies
\begin{equation}
\label{lem7proof}
    S_t v - v
  = 
    \int_0^t S_s ( v'' ) \, ds  
  \qquad
    \text{and}
  \qquad  
    \|  
      ( S_t - I ) v 
    \|_{
      C( [0,1],\mathbb{R})
    }
  \leq 
    t \cdot
    \| 
      v'' 
    \|_{ 
      C([0,1],\mathbb{R}) 
    }
\end{equation}
for every $ t \in (0,T] $
and every 
$ v \in P_N( C([0,1], \mathbb{R}) ) $.
In the next step define the
continuous function
$
  y_N \colon [ 0, \tau ]
  \rightarrow
  P_N( C([0,1], \mathbb{R}) )
$
by
\begin{equation}
  y_N(t) 
  := x_N(t) - o_N(t) 
  =
  \int_{0}^{t}  
    P_N \, S_{ t - s } \, F( x_N(s) ) \, 
  ds
  =
  \int_{0}^{t}  
    S_{ t - s } \, P_N \, F( x_N(s) ) \, 
  ds
\end{equation}
for every $ t \in [0, \tau] $.
Here the $ \mathbb{R} $-vector space
$
  P_N( C([0,1], \mathbb{R}) )
$
is equipped with the
supremum norm
$
  \left\| v \right\|_V
  =
  \sup_{ 0 \leq x \leq 1 }
  | v(x) |
$
for every
$ 
  v \in 
  P_N( C([0,1], \mathbb{R}) )
$.
Furthermore, observe that
equation~\eqref{lem7proof}
implies
\begin{equation}
\label{eq:limit0}
\begin{split}
&
  \frac{ 
    y_N(t_2) - y_N(t_1) 
  }{
    t_2 - t_1
  }
= 
    \frac{ 1 }{ t_2 - t_1 } 
    \int_{t_1}^{t_2}
      S_{ t_2 - s } \, P_N \, F( x_N(s) ) \, 
    ds
  + 
  \frac{ 
    \left( S_{ t_2 - t_1 } - I
    \right)   
    y_N(t_1)
  }{ t_2 - t_1 }
\\ & =
    \frac{ 1 }{ t_2 - t_1 } 
    \int_{t_1}^{t_2}
      S_{ t_2 - s } \, P_N \, F( x_N(s) ) \, 
    ds
  + 
  \frac{ 
    1
  }{ t_2 - t_1 }
  \int_{ 0 }^{ t_2 - t_1 }
  S_s \;
  \Delta y_N(t_1) 
  \,
  ds
\end{split}
\end{equation}
for all 
$
  t_1, t_2 \in [0,\tau]
$
with 
$
  t_1 < t_2
$.
Here and below
$ 
  \Delta y_N(t) 
$
is the second derivative of
$ y_N(t) $ in the spatial variable,
i.e.,
$
  ( \Delta y_N(t) )(x)
  =
  \big(
  \frac{ \partial^2 y_N 
  }{
    \partial x^2
  }
  \big)(t,x)
$
for all 
$ t \in [0,\tau] $
and all
$ x \in [0,1] $.
Next again \eqref{lem7proof} 
implies
\begin{align}
\label{eq:limit1}
&
  \lim_{
    \substack{
      t_1 \nearrow t, \, t_2 \searrow t \\
      0 \leq t_1 < t_2 \leq \tau
    }
  }
  \frac{ 
    1
  }{ t_2 - t_1 }
  \int_{ 0 }^{ t_2 - t_1 }
  S_s \;
  \Delta y_N(t_1) 
  \,
  ds
  =
  \Delta y_N(t) 
  \qquad \text{and}
\\ 
\label{eq:limit2}
&
  \lim_{
    \substack{
      t_1 \nearrow t, \, t_2 \searrow t \\
      0 \leq t_1 < t_2 \leq \tau
    }
  }
    \frac{ 1 }{ t_2 - t_1 } 
    \int_{t_1}^{t_2}
      S_{ t_2 - s } \, P_N \, F( x_N(s) ) \, 
    ds
  =
  P_N \, F( x_N(t) ) 
\end{align}
for every $ t \in [0,\tau] $.
Combining \eqref{eq:limit0}-\eqref{eq:limit2}
then results in
\begin{equation}
  \tfrac{ \partial }{ \partial t } 
  \,
  y_N(t)
=
  \Delta y_N(t) 
  + P_N \, F( x_N(t) ) 
=
  \Delta y_N(t) 
  + 
  P_N \,
  F\big( 
    y_N(t) + o_N(t)
  \big)
\end{equation}
and Lemma~\ref{lemburger}
hence gives
\begin{align*}
  \tfrac{ \partial }{ \partial t }  
  \left\| 
    y_N(t)
  \right\|_{L^2}^2
& 
  = 2 \left< 
    y_N(t), 
    \Delta y_N(t)
    + F( y_N(t) + o_N(t) ) 
  \right>_{L^2}  
\\ & \leq 
  4 c^2 
  \| y_N(t) 
  \|^2_{L^2} \!
  \sup\limits_{ 0 \leq s \leq \tau }
  \| o_N(s) \|^2_{ C([0,1] , \mathbb{R}) }
  + 
  4 c^2 \!
    \sup_{0 \leq s \leq \tau}
    \| o_N(s) \|^4_{  
      C( [0,1], \mathbb{R}) 
    }
\end{align*}
for every $ t \in [0, \tau] $. 
Gronwall's lemma 
and the estimates
$ x^2 \leq e^x $
and
$
  x \leq e^x
$
for all $ x \in [0,\infty) $
therefore yield
\begin{equation}
\label{r1proof}
  \| y_N(t) \|^2_{ L^2 } 
\leq 
  \exp\!\Big(
    4 c^2 z^2 T
  \Big)
  4 c^2 z^4
  T
\leq
  e^{
    4 c^2 z^2 T 
    +
    2 | c | z^2
    +
    T
  }
\leq 
  e^{
    4 (c^2 + 1) ( T + 1) z^2
  }
\end{equation}
for every $ t \in [0, \tau] $ 
where here and below
$
  z :=
  \max\!\big( 
    1,
    \sup\limits_{ 
      0 \leq s \leq \tau 
    }
    \| 
      o_N(s) 
    \|_{ 
      C( [0, 1] , \mathbb{R} ) 
    }
  \big)
$.
In the next step 
Lemma~\ref{lemmadditional}
shows
\begin{equation}
\label{r2proof}
\begin{split}
  \| y_N(t)\|_{ L^4 }
  &=
  \int^t_0 \|P_N \, S_{ t - s } \, F( x_N(s) ) \|_{ L^4 } \, ds
\\ & \leq
  2^{ \frac{1}{8} }	
  |c| 
  \int^t_0 
    ( t - s )^{ - \frac{1}{8} }
    \,
    \Big\|
      S_{ \frac{ (t-s) }{ 2 } }
      \Big( 
      \left( 
        ( x_N(s) )^2 
      \right)' 
      \Big)
    \Big\|_{ L^2 }
  \, ds
\\ & \leq
  2^{ \frac{1}{8} }	
  |c|
  \int^t_0 
      ( t - s )^{ - \frac{1}{8} }
    \cdot
    4 \cdot (T+1) \cdot
    \left( \tfrac{t-s}{2} \right)^{-\frac{3}{4}}
    \cdot
    \| ( x_N(s) )^2 \|_{ L^1 }
    \,
  ds
\\ & \leq
  64 \left( T + 1 \right) 
  T^{ \frac{1}{8} }	 
  \, |c|
  \left(
    \sup_{0 \leq s \leq \tau }
    \| x_N(s)\|_{L^2}^2
  \right)
\end{split}
\end{equation}
for every $ t \in [0,\tau] $.
Additionally, again
Lemma~\ref{lemmadditional}
gives
\begin{align}
\label{r3proof}
  & 
  \|
    y_N(t) 
  \|_{ C([0,1] , \mathbb{R}) }
=
  \Big\|
    \int^t_0
      P_N \, S_{ t - s } \,
      F( x^N(s) ) \,
    ds
  \Big\|_{ C([0,1] , \mathbb{R}) }
\\ & \leq 
\nonumber
  2 
  \int^t_0
    (t-s)^{ - \frac{3}{4} } \,
    \| F( x^N(s) )   \|_{H^{-1}}
\,
  ds
  \leq 
  8 \, | c | \, T^{ \frac{1}{4} } 
  \left(
    \sup_{ 0 \leq s \leq \tau }
     \| x_N(s) \|_{ L^4 }^2
  \right)
\end{align}
for every $ t \in [0, \tau] $. 
Combining 
\eqref{r2proof} 
and \eqref{r3proof}
then yields
\begin{align*}
&
  \sup_{0 \leq t \leq \tau }
  \| y^N(t) \|_{ C([0,1] , \mathbb{R}) }
\leq 
  8 \, |c| \, T^{\frac{1}{4}}
  \Big( 
    \sup_{0 \leq t \leq \tau }
    \| y^N(t) \|_{L^4}
    + 
    \sup_{0 \leq t \leq \tau }
    \| o^N(t) \|_{L^4}
  \Big)^{ \! 2 }
\\ & \leq 
  8 \, |c| \, T^{ \frac{1}{4} } 
  \Big( 
    64 \left(T + 1\right) 
    T^{ \frac{1}{8} } 
    \, |c| 
    \sup_{0 \leq t \leq \tau }
    \| x_N(t) \|_{L^2}^2
    + z
  \Big)^{ \! 2 }
\\ & \leq 
  2^{16} \, ( |c|^3 +1 ) \,
  (T+1)^3
  \Big( 
    \Big(
      \sup_{0 \leq t \leq \tau }
      \|y_N(t) \|_{L^2} + z
    \Big)^{ \! 4 }
    + z^2
  \Big) .
\end{align*}
Inequality~\eqref{r1proof}
therefore shows
\begin{align*}
&
  \sup_{0 \leq t \leq \tau }
  \| 
    y_N(t)
  \|_{ C([0,1] , \mathbb{R}) }
  \leq 
  2^{ 19 } \, 
  ( |c|^3 +1 ) \,
  (T+1)^3
  \Big( 
    \sup_{0 \leq t \leq \tau }
    \| y_N(t) \|^4_{ L^2 } 
    + 
    2 z^4
  \Big)
\\ & \leq
  2^{ 19 } \, ( |c|^3 + 1 ) \,
  (T+1)^3
  \Big( 
    e^{ 
      8 ( c^2 + 1 ) ( T + 1 ) z^2 
    }
    + 2 z^4
  \Big) 
\\ & \leq
  \left(
    2^{ 7 } 
    \left( c^2 + 1 \right)
    \left( T + 1 \right)
  \right)^{ 3 }
  e^{ 
    8 ( c^2 + 1 ) ( T + 1 ) z^2 
  }
\leq
  e^{  
    23 ( c^2 + 1 ) ( T + 1 ) z^2
  } 
\end{align*}
and this completes the
proof of 
Lemma~\ref{lemproof}.
\end{proof}
\subsubsection{Acknowledgement}
We are very grateful to
Sebastian Becker for his 
considerable help with the 
numerical simulations
and his perfect typing job.
We also thank an anonymous 
referee for his helpful remarks,
particularly, for pointing out 
a useful generalization in 
Assumption~\ref{semigroup}
to us.

\bibliographystyle{siam}
\bibliography{bibfile}

\end{document}